\theoremstyle{plain}
\newtheorem{theorem}{Theorem}[section]
\newtheorem{con}[theorem]{Conjecture}
\newtheorem{problem}[theorem]{Problem}
\newtheorem{prop}[theorem]{Proposition}
\newtheorem{lemma}[theorem]{Lemma}
\newtheorem{definition}[theorem]{Definition}
\theoremstyle{definition}
\newtheorem{ex}[theorem]{Example}
\newtheorem{rmk}[theorem]{Remark}
\numberwithin{equation}{section}
\newcommand{\bs}{\backslash}
\newcommand{\C}{\mathbb{C}}
\newcommand{\Hc}{\mathcal{H}}
\newcommand{\Q}{\mathbb{Q}}
\newcommand{\Z}{\mathbb{Z}}
\newcommand{\Zc}{\mathcal{Z}}
\newcommand{\R}{\mathbb{R}}
\newcommand{\N}{\mathbb{N}}
\newcommand{\Aut}{\operatorname{Aut}}
\newcommand{\Exp}{\operatorname{Exp}}
\newcommand{\Sl}{\operatorname{SL}}
\newcommand{\SO}{\operatorname{SO}}
\newcommand{\PSL}{\operatorname{PSL}}
\newcommand{\Iso}{\operatorname{Iso}}
\newcommand{\Lie}{\operatorname{Lie}}
\newcommand{\Ad}{\operatorname{Ad}}
\newcommand{\ad}{\operatorname{ad}}
\newcommand{\diag}{\operatorname{diag}}
\newcommand{\vol}{\operatorname{vol}}
\newcommand{\Spec}{\operatorname{spec}}
\newcommand{\supp}{\operatorname{supp}}
\newcommand{\Span}{\operatorname{span}}
\newcommand{\err}{\operatorname{err}}
\newcommand{\rank}{\operatorname{rank}}
\newcommand{\PSl}{\operatorname{PSl}}
\newcommand{\re}{\operatorname{Re}}
\newcommand{\vb}{\operatorname{\bf v}}
\newcommand{\rb}{\operatorname{\bf r}}
\newcommand{\Cc}{\mathcal {C}}
\newcommand{\cb}{\operatorname{\bf c}}
\newcommand{\bH}{\operatorname{\bf H}}
\newcommand{\sA}{\mathsf{A}}
\newcommand{\sP}{\mathsf{P}}
\newcommand{\sw}{\mathsf{w}}
\newcommand{\Wc}{\mathcal{W}}
\def\hat{\widehat}
\def\af{\mathfrak{a}}
\def\e{\epsilon}
\def\ff {\mathfrak{f}}
\def\gf{\mathfrak{g}}
\def\cf{\mathfrak{c}}
\def\hf{\mathfrak{h}}
\def\kf{\mathfrak{k}}
\def\lf{\mathfrak{l}}
\def\mf{\mathfrak{m}}
\def\nf{\mathfrak{n}}
\def\so{\mathfrak{so}}
\def\pf{\mathfrak{p}}
\def\qf{\mathfrak{q}}
\def\sl{\mathfrak{sl}}
\def\sp{\mathfrak{sp}}
\def\spin{\mathfrak{spin}}
\def\tf{\mathfrak{t}}
\def\uf{\mathfrak{u}}
\def\su{\mathfrak{su}}
\def\wf{\mathfrak{w}}
\def\zf{\mathfrak{z}}
\def\la{\langle}
\def\ra{\rangle}
\def\1{{\bf1}}
\def\U{\mathcal{U}}
\def\E{\mathcal{E}}
\def\B{\mathcal{B}}
\def\oline{\overline}
\def\W{\mathsf{W}}
\def\sE{\mathsf{E}} 
\def\sG{\mathsf{G}} 
\def\sF{\mathsf{F}} 
\def\HH{\mathbb{H}}
\newcounter{class}
\newcommand{\yy}{\stepcounter{class}(\arabic{class})}
\newcounter{Tabelle}
\newcommand{\Tabelle}[1]{\refstepcounter{Tabelle}Table \arabic{Tabelle}\label{#1}}
\newcommand\aas{\llap{$*$}}
\title[Lattice counting]
{Geometric counting on wavefront real spherical spaces}
\begin{document}

\date{August 28, 2017}

\author[Kr\"otz]{Bernhard Kr\"{o}tz}
\email{bkroetz@gmx.de}
\address{Universit\"at Paderborn, Institut f\"ur Mathematik\\Warburger Stra\ss e 100, 
33098 Paderborn, Deutschland}
\thanks{ The second author was partially supported by ISF 1138/10 and ERC 291612}
\author[Sayag]{Eitan Sayag}
\email{eitan.sayag@gmail.com}
\address{Department of Mathematics, Ben Gurion University of the Negev\\P.O.B. 653, Be'er Sheva 84105, Israel}
\author[Schlichtkrull]{Henrik Schlichtkrull}
\email{schlicht@math.ku.dk}
\address{University of Copenhagen, Department of Mathematics\\Universitetsparken 5, 
DK-2100 Copenhagen \O, Denmark}

\begin{abstract}  We provide $L^p$-versus $L^\infty$-bounds for eigenfunctions
on a real spherical space $Z$ of wavefront type. It is shown that these bounds imply 
a non-trivial  error term estimate for  lattice counting on $Z$. 
The paper also serves as an introduction to geometric counting on spaces of the
mentioned type.

\end{abstract}

\maketitle

\section{Introduction}\label{Sect1}

Given a space $Z$ with a discrete subset $D$ and an increasing and exhausting family
$(B_R)_{R>0}$ of compact subsets, it can be of considerable interest to know
the expected number of points from $D$ inside $B_R$ for large $R$. In general terms
this is called {\it lattice counting} on $Z$. 
This paper is about lattice counting on a homogeneous space $Z=G/H$, and methods 
from harmonic analysis to approach it.   Here $G$ is a real reductive group and $H$ is a closed 
subgroup with finitely many components and real algebraic Lie algebra. 
\par At this point we do not go into the  specifics of the lattice count on $G/H$ and refer right 
away to Sections \ref{Sect2} and 
\ref{Sect4} where we give a self-contained exposition aimed at a wide audience. The terminology 
there is essentially taken from \cite{DRS}
but we emphasize more the underlying fiber bundle structure 
$H\to G\to G/H$ and relate counting on $Z$ to counting on the total space $G$ and fiber $H$.
\par The lattice counting problem including non-trivial error terms has a positive solution 
for all symmetric spaces $Z=G/H$.  A central tool there is the so-called wavefront lemma, see \cite{EM}. The wavefront lemma however holds more generally for all 
real spherical spaces of wavefront type.  In Section \ref{wfrss} we give an introduction to real spherical 
space of wavefront type and recall the proof of the wavefront lemma from \cite{KKSS1}. 
 
\par The strive for solving the lattice counting problem triggered many interesting developments, perhaps
more interesting than the problem itself. Specifically we mention here  Selberg's trace formula for the upper half plane.  Here we wish to point out another connection to harmonic analysis.  In  
\cite{KSS} we have shown that non-trivial error terms for the lattice count are tied to $L^p$-versus 
$L^\infty$-bounds of eigenfunctions on  the non-compact space $Z$.  
\par We assume now that $Z=G/H$ is unimodular, i.e. carries a $G$-invariant  positive Radon measure. 
Following \cite{B} we measure volume growth on $Z$ via a volume weight 
$$ \vb(z) = \vol_Z(Bz) \qquad (z\in Z)$$
where $B$ is some fixed neighborhood of $\1$ in $G$.  Let $1\leq p<\infty$.
Then Bernstein's invariant Sobolev lemma
(\cite{B} key lemma p.~686,  or \cite{KS2}, Lemma 4.2) implies for all $f\in C^\infty(Z)$ that 

\begin{equation} \label{ISL} |f(z)|  \leq C \vb(z)^{-{1\over p}}  \|f\|_{p;k} \qquad (z\in Z)\end{equation} 
where $\|\cdot\|_{p;k}$ is a $k$-th Sobolev norm of the $L^p$-norm $\|\cdot\|_p$ and $k>{\dim G\over p}$.

\par We recall from \cite{KSS2} that $\vb$ is uniformly bounded from below if and only if $H$ is 
reductive in $G$. Let us assume this in the sequel.  Then we obtain in particular: 

\begin{equation} \label{ISL2}
\| f\|_\infty \leq C \|f\|_{p; k} \qquad (f\in C^\infty(Z))\, .\end{equation}
What is relevant for the lattice count are estimates in the other direction. 
We call $f\in C^\infty(Z)$ an eigenfunction if it is an eigenfunction for $\Zc(\gf)$, the center of the universal 
enveloping algebra $\U(\gf)$ of $\gf=\Lie (G)$.    
Now, given $1\leq p'<p< \infty$ we ask whether there 
exist a number
$l=l(Z)>0$ and a constant $C>0$ such that 

\begin{equation}\label{expect} \| f\|_p \leq C \|f\|_{\infty; l}\, \end{equation}
holds for all $L^{p'}$-eigenfunctions $f$ on $Z$. 
It is of independent interest to classify all homogeneous spaces 
$Z=G/H$ which feature (\ref{expect}).

\par The main result of this paper is the verification of (\ref{expect}) for all wave-front real spherical spaces (see Section \ref{Sect7}). 
Combined with the harmonic analysis approach of 
\cite{KSS} this then leads to a quantitative error bound
for the lattice count on all real spherical spaces of wave-front type in Theorem 
\ref{thm error bound}.

\par Our approach to (\ref{expect}) relies on a generalization 
of Harish-Chandra's constant-term approximation of eigenfunctions on reductive groups \cite{HC} to real spherical spaces 
of wave-front type \cite{DKS}.  In Section 6 we first treat the less technical and instructive case  
of $\rank_\R Z=1$:  we cut down the techniques from 
\cite{DKS} to the absolute necessary and provide a proof of a certain constant term approximation. Let us mention that the constant term approximation obtained in this paper differs from the one in \cite{DKS}, see 
Remark \ref{debate on ct}. Finally, in Section \ref{Sect7} we adapt the proof from the rank one case  to higher rank and obtain 
(\ref{expect}) for all wave-front real spherical spaces. 

\bigskip\noindent 
 {\it Acknowledgement:}  We wish to thank the referee for his insistence on technical detail and clarity of arguments. 
It eventually made this article much more readable.

\section{Geometric counting}\label{Sect2}

A setup for {\it geometric counting} needs: 
\begin{itemize} 
\item A locally compact space $X$. 
\item A notion of volume on $X$ given by a Radon measure $\mu$.  
\item A  discrete set $D\subset X$. 
\item An increasing and exhausting family $\B=(B_R)_{R>0}$ of relatively compact sets $B_R\subset X$.
\end{itemize}
For $R>0$ we then set 
$$N_R(D, X):=\# \{ d\in D \mid d\ \in B_R\}\, .$$
For a measurable subset $B\subset X$ we use  the notation $\vol(B) =|B|=\mu(B)$.  
We then ask to what extent $N_R(D,X)$ approximates $\vol(B_R)$  for $R\to \infty$.   We say that the
quadruple 
$(X,\mu, D, \B)$  satisfies {\it main term counting} (MTC) provided that 
 
\begin{equation} \lim_{R\to \infty}    {N_R(D, X) \over |B_R|}  = 1\, .\end{equation}

The mother of all counting problems is the {\it Gau\ss{} circle problem} (GCP), that is 
$(\R^2, dx \wedge dy, \Z^2, \B)$ with $B_R=B_R^{\rm Eucl} $ the round Euclidean ball of radius $R$.  
It is almost immediate that the GCP satisfies MTC. 

\par In order to expect MTC in a general setup explained above one needs additional assumptions. In some sense 
the discrete set needs to be equidistributed at infinity.  This might be satisfied if $D$ is {\it freely  homogeneous}, that is:
\begin{itemize}
\item There is an infinite discrete group $\Gamma$ acting on $X$ properly,  freely and volume-preserving. 
\item There is $x_0\in X$ such that $\Gamma\simeq \Gamma\cdot x_0 =D$. 
\item There is a (locally closed) fundamental domain $F\subset X$ for the 
$\Gamma$-action with $\vol(X/\Gamma):=\vol (F) =1$ and $x_0\in F$.
\end{itemize}

At least for $F$ relatively compact and the family $(B_R)_{R>0}$ exhausting the space  
in a homogeneous manner, we 
can imagine that $\vol (B_R)$  is asymptotically approximated by 
$$\#\{ \gamma\in \Gamma  \mid \gamma \cdot F \subset B_R\}, $$
i.e. the number of tiles $\gamma\cdot F$ which lie in $B_R$.    In fact, it is easy to construct a family $\B$ which satisfies 
MTC. For that  let $(F_R)_{R>0}$ be a relatively compact exhaustion of the fundamental domain $F$ and 
$(\Gamma_R)_{R>0}$ an exhaustion of $\Gamma$ by finite subsets.  Then $B_R:= \Gamma_R \cdot F_R$ defines 
an exhaustion of $X$ which satisfies MTC. 
In practice we certainly wish to take more geometric exhaustions $\B$ than the one constructed above.  Typically 
has a variety of interesting metrics $d$ on $X$ and one would like to take for $\B$ the metric balls
$B_R:=\{ x\in X\mid  d(x,x_0)<R\}$.  We return to this issue later on.

\par Here is a large class of freely homogeneous examples where MTC holds.  We let $X=G$ be connected Lie group 
which admits a lattice $\Gamma<G$. We recall that a lattice in a Lie group $G$ is a discrete subgroup $\Gamma<G$ 
with finite co-volume with respect to a Haar measure $\mu$ on $G$.  We take $D=\Gamma$ and normalize $\mu$ such that 
$\vol (G/\Gamma)=1$. 
For the exhausting family $\B $ almost anything will do; a particular nice family would be  
balls of radius $R$ with respect to a left invariant metric on $X=G$.  The quickest way to establish 
MTC for $G$  is via the wavefront lemma applied to $G$ viewed as a homogeneous
space for the two-sided action of $G\times G$, see 
\cite{EM} and Remark \ref{WFgpcase} after Lemma \ref{wfl} below.
A further study yields also an error estimate for this case, see Theorem 1.5 of \cite{GNbook}.

\par Starting with a freely homogeneous quadruple 
$(X, \mu, \Gamma, \B)$ which satisfies MTC we let 
$p: X\to Z:= H\bs X$ be a principal fiber bundle with 
fibre $H$. We assume that $H$ is  a Lie group.  Let $z_0=p(x_0)$ and identify 
$H$ with $H \cdot x_0= p^{-1}(z_0)$. 
Let $\mu_H$ be a  right Haar-measure on $H$ and $\mu_X=\mu$. We request that there is a Radon measure $\mu_Z$ on $Z$ such that 

\begin{equation}\label{measure compatible}  \int_X  f(x) \ d\mu_X(x) = \int_Z  \int_H  f (h \tilde z)  \ d\mu_H(h) \ d\mu_Z(z)\qquad (f\in C_c(X))\end{equation} 
where $Z \to X,  \ z\mapsto \tilde z$ is some measurable cross section.

The following assumptions on the fibre are then natural: 
\begin{itemize} 
\item There is a discrete subgroup $\Gamma_H <\Gamma$ such that $H \cap (\Gamma \cdot x_0) = \Gamma_H \cdot x_0$.
\item $\Gamma_H$ is a co-volume $1$ lattice in $H$. 
\item $(H, \mu_H, \Gamma_H,  \B^H)$   satisfies MTC where $\B^H:= (B_R \cap H)_{R>0}$. 
\end{itemize}

Let now $B_R^Z:=p(B_R)$ and $\B^Z$ the corresponding family of balls.  One might then ask whether   $p(\Gamma)\subset Z$ is discrete and 
$(Z, \mu_Z,  p(\Gamma),  \B^Z)$ satisfies MTC? 

In case the principal bundle is homogeneous we have the following:

\begin{con} \label{p1} Let  $G$ be a connected Lie group and $H<G$ a closed subgroup such that $Z:=G/H$ carries 
a $G$-invariant positive Radon measure $\mu_Z$ which satisfies  (\ref{measure compatible}) with respect to some 
Haar measures $\mu_G$ and $\mu_H$ of $G$ and $H$.  Assume that $H$ has finitely many connected components. 
Further let $\Gamma<G$ be a lattice such that 
\begin{itemize}
\item  $\vol (G/\Gamma)=1$.
\item  $\Gamma_H:=\Gamma\cap H$ is a lattice in $H$ such that $\vol(H/\Gamma_H)=1$. 
\end{itemize}
Then there exists an exhausting compact family $\B^Z=(B_R^Z)_{R>0}$ of $Z$ such that 
the quadruple $(Z,\mu_Z, \Gamma/\Gamma_H, \B^Z)$ satisfies MTC. 
\end{con}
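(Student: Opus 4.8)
The plan is to unfold the count along the fibres of $p\colon G\to Z$ and thereby turn it into a lattice-point count for $\Gamma$ inside $G$, where the geometry is under better control. The structural input is the classical fact that, since $\Gamma_H=\Gamma\cap H$ is a lattice in $H$, the set $H\Gamma$ is closed in $G$ (equivalently $H\Gamma/\Gamma$ is a closed submanifold of $G/\Gamma$ of finite $H$-invariant volume). Consequently the $\Gamma$-orbit $D:=\Gamma\cdot z_0$ of the base point $z_0=p(\1)$ is discrete and closed in $Z$ and is identified by the orbit map with $\Gamma/\Gamma_H$, so that $(Z,\mu_Z,\Gamma/\Gamma_H,\B^Z)$ is a legitimate counting quadruple once $\B^Z$ is produced. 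Note $H$ carries a lattice, hence is unimodular, and a fundamental domain of $\Gamma_H$ in $H$ has $\mu_H$-volume $\vol_H(H/\Gamma_H)=1$.

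Fix a measurable cross-section $z\mapsto\tilde z$ as in (\ref{measure compatible}), taking relatively compact sets to relatively compact sets; a Borel fundamental domain $\mathcal D_H\subseteq H$ for $\Gamma_H$; and, by inner regularity of $\mu_H$, relatively compact $D_m\subseteq\mathcal D_H$ increasing to $\mathcal D_H$ with $\vol_H(D_m)\to1$. For a relatively compact $\Omega\subseteq Z$ put $\widetilde\Omega:=\{\tilde z:z\in\Omega\}$ and let $\Omega^{(m)}\subseteq G$ be the (relatively compact) fibre-slab obtained by sweeping $\widetilde\Omega$ by $D_m$ along the fibres, with $\Omega^{(\infty)}$ the analogous slab built from the full $\mathcal D_H$; note $p(\Omega^{(m)})=p(\Omega^{(\infty)})=\Omega$. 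Two elementary facts then apply: (i) by the disintegration formula (\ref{measure compatible}), $\vol_G(\Omega^{(m)})=\vol_H(D_m)\vol_Z(\Omega)$ and $\vol_G(\Omega^{(\infty)})=\vol_Z(\Omega)$; (ii) because $\mathcal D_H$ is a fundamental domain, the orbit map restricts to a \emph{bijection} $\Gamma\cap\Omega^{(\infty)}\to D\cap\Omega$ and to an \emph{injection} $\Gamma\cap\Omega^{(m)}\hookrightarrow D\cap\Omega$ for finite $m$; since $D$ is discrete and $\overline\Omega$ compact, $D\cap\Omega$ is finite and $\#(\Gamma\cap\Omega^{(m)})=\#(D\cap\Omega)$ once $m$ is large enough (depending on $\Omega$). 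Thus, fixing a relatively compact exhaustion $(\Omega_n)_n$ of $Z$ and setting $\B^Z:=(\Omega_n)_n$ (reindexed continuously), the problem is reduced to the asymptotics, as $n\to\infty$, of the lattice-point counts $\#(\Gamma\cap\Omega_n^{(\infty)})$ and $\#(\Gamma\cap\Omega_n^{(m)})$ against their $\mu_G$-volumes.

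The remaining step is a squeeze and is where I expect the real work. For the lower bound one uses the \emph{bounded}-fibre slabs: $\#(D\cap\Omega_n)\ge\#(\Gamma\cap\Omega_n^{(m)})$ by injectivity, and --- choosing the $\Omega_n$ (and the cross-section) so that the slabs $\Omega_n^{(m)}$ are well-rounded in $G$ --- the standard lattice-point asymptotics give $\#(\Gamma\cap\Omega_n^{(m)})\sim\vol_G(\Omega_n^{(m)})=\vol_H(D_m)\vol_Z(\Omega_n)$, so $\liminf_n\#(D\cap\Omega_n)/\vol_Z(\Omega_n)\ge\vol_H(D_m)\to1$. For the upper bound one is forced to confront the \emph{full} slab $\Omega^{(\infty)}$, since $\#(D\cap\Omega_n)=\#(\Gamma\cap\Omega_n^{(\infty)})=\sup_m\#(\Gamma\cap\Omega_n^{(m)})$; the difficulty is that $\mathcal D_H$ --- hence $\Omega^{(\infty)}$ --- is unbounded in the fibre direction whenever $\Gamma_H$ is non-uniform in $H$, so that counting $\Gamma$-points there, equivalently making the above convergence uniform in $m$, requires control over lattice points escaping to infinity inside the fibre. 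Concretely one would want, for a small enough relatively compact neighbourhood $V$ of $\1$, that the translates $V\tilde z$ ($z\in D\cap\Omega_n$) are pairwise disjoint --- which does follow from $\Gamma\cap V^{-1}HV=\Gamma_H$, a consequence of $H\Gamma$ being closed --- together with a well-roundedness estimate $\vol_G\big(\bigcup_{z\in D\cap\Omega_n}V\tilde z\big)\le(1+o(1))\vol_Z(\Omega_n)$; it is this last, uniform-in-the-fibre estimate that cannot be taken for granted in the present generality, which is why the assertion is posed as a conjecture. When $Z$ is a wavefront real spherical space it is exactly the wavefront lemma (Lemma \ref{wfl}) that supplies the missing uniformity, and the squeeze then closes --- the route followed in Section \ref{Sect7}.
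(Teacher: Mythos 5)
The statement you are proving is labelled a \emph{Conjecture} in the paper: there is no proof of it there to compare against, only a heuristic remark (the one following Conjecture \ref{p1}) which is explicitly restricted to the case where $Y_H=H/\Gamma_H$ is compact. Your proposal is consistent with that status --- you do not claim to close the argument, and the step you flag as missing (the upper bound, i.e.\ counting $\Gamma$-points in the full fibre slab $\Omega^{(\infty)}$, which is unbounded in the fibre direction when $\Gamma_H$ is non-uniform) is precisely the genuine obstruction. So the correct assessment is: your reduction is sound as far as it goes, and the gap you identify is real and is exactly why the paper leaves the statement open.

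Comparing routes: the paper's heuristic first assumes the bundle $G\to Z$ is trivial with product balls $B_R^G=B_R^Z\times B_R^H$, factors both the volume and the count ($N_R(\Gamma,G)=N_R(\Gamma_H,H)\cdot N_R(\Gamma/\Gamma_H,Z)$), and then reduces the general case to this one by trivializing over an open, measure-dense subset $Z'\subset Z$ avoiding the lattice points. Your cross-section/fibre-slab version performs the same unfolding without the trivialization detour, and it has the virtue of making transparent where compactness of $Y_H$ enters: when $\mathcal D_H$ is relatively compact one may take $D_m=\mathcal D_H$ for large $m$, the slabs $\Omega^{(\infty)}$ are relatively compact, and the squeeze closes immediately from MTC for $\Gamma$ in $G$ (granting well-roundedness of the slabs, which is a choice one is free to make since the conjecture only asks for \emph{some} family $\B^Z$). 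Two small corrections. First, your closing attribution is off: the wavefront case is settled in Theorem \ref{MTC wf} of Section \ref{wfrss}, not Section \ref{Sect7}, and the route there is the Eskin--McMullen one --- mixing/equidistribution of the translates $gH\Gamma/\Gamma$ in $G/\Gamma$ combined with well-roundedness of the balls \cite{EM} --- which is dual to, not identical with, your direct slab count; Section \ref{Sect7} concerns the $L^p$ versus $L^\infty$ bounds for the \emph{error term}, not MTC. Second, the uniform disjointness you want for the upper bound does not follow merely from $H\Gamma$ being closed: discreteness of $D=\Gamma\cdot z_0$ gives $\Gamma\cap\mathcal U H=\Gamma_H$ for some small $\mathcal U$, but the pairwise disjointness of the tubes $V\tilde z$ for \emph{all} $z\in D$ is a uniform statement over the orbit and needs a separate argument even before one confronts the escape of mass in the fibre.
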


We point out here that the setup in the above open problem was taken from \cite{DRS}.  
Notice that the GCP falls in this setup 
as well as the {\it Selberg circle problem} (SCP)   on the upper half plane $Z=\Sl(2,\R)/ \SO(2,\R)$ on which we will comment 
in more detail later on.

\begin{rmk} {\rm (Heuristics for Conjecture \ref{p1})} We now give some heuristics for Conjecture 
\ref{p1} in case $Y_H:= H/\Gamma_H$ is compact.
Let us assume first that the fibration $G \to Z=G/H$ is trivial, that is 
$G\simeq Z \times H$ as a right $H$-space. This is already an interesting class and typical examples arise in the following manner: We let $G$ be semisimple, 
$G=KAN$ be an Iwasawa decomposition and $N=N_1 \times N_2$.  Then for $H=N_2$, the principal bundle $G\to Z$ 
is trivial, as $Z\simeq K \times A \times N_1$ as manifolds.  
\par To continue, we assume that $B_R^G = B_R^Z \times B_R^H$ 
are product balls.  This yields in particular
\begin{equation} \label{volume factor} \vol (B_R^G)  = \vol (B_R^Z) \cdot \vol(B_R^H)\, .\end{equation} 
As we assume that  $Y_H:= H/\Gamma_H$ is compact we have  
$H= B_R^H \Gamma_H$ for sufficiently large $R$, say $R\geq R_0$.   We need to be a bit more specific on the balls
$B_R^H$ and request that they are of the form $B_R^H=F_H \Gamma_{H,R}$ with $\1_H\in F_H\subset H$ a fundamental 
domain for the right action of $\Gamma_H$ on $H$ and $\1_H\in  \Gamma_{H,R}\subset \Gamma_H$ an exhausting family 
of finite subsets.  We claim that 

\begin{equation} \label{factor count} N_R(\Gamma, G) = N_R(\Gamma_H, H)  \cdot N _R (\Gamma/\Gamma_H, Z)  \qquad (R\geq R_0)\, .\end{equation} 
In fact, let $\tilde N_R(\Gamma,G)=\{ \gamma\in \Gamma\mid \gamma\in B_R^G\}$ so that $\# \tilde N_R(\Gamma, G)= N_R(\Gamma, G)$. Let $\gamma\in \tilde N_R(\Gamma, G)$ and $\oline \gamma= \gamma H$ be its image in $Z$.  Clearly $\oline \gamma
\in B_R^Z$.  Conversely, let $\oline \gamma=\gamma H \in B_R^Z$ with $\gamma\in \Gamma$.  Then 
$\gamma\in B_R^Z H = B_R^Z  F_H \Gamma_H$.  This allows us to assume that $\gamma\in B_R^Z F_H \subset B_R^Z B_R^H$, i.e.
$\gamma\in \tilde N_R(\Gamma, G)$.  This shows in particular that the map 
$$\tilde N_R(\Gamma, G) \to \tilde N_R(\Gamma/ \Gamma_H, B_R^Z), \ \gamma \mapsto \oline \gamma$$
is surjective with fibers isomorphic to $\Gamma_{H,R}= \tilde N_R(\Gamma_H, B_R^H)$. 
We know that MTC holds for groups and all reasonable balls. 
If we assume now that MTC holds for the above class of balls $B_R^G$ in $G$ (which in the example of $H=N_2$ abelian unipotent can be easily verified), then we deduce from (\ref{volume factor}) and (\ref{factor count}) that MTC holds in case $G\simeq Z \times H$ is trivial.

\par To adapt the argument to the case where $G\to Z$ is not trivial we exhibit now 
an open and measure dense subset $Z'\subset Z$ for which the bundle trivializes and which avoids
all lattice points.  To obtain $Z'$ we first find a countable cover of open sets $U_1,U_2,\ldots$ of 
$Z=G/H$ such that $G\to Z$ trivializes over each $\oline U_i$. We may assume that each $\oline U_i$
is diffeomorphic to a closed euclidean ball.  Now consider the disjoint union 

$$Z':=U_1\cup U_2 \bs \oline U_1\cup U_3 \bs (\oline U_1\cup \oline U_2)\cup \ldots \,. $$
Clearly $Z'$ is open, measure dense in $Z$ and $G\to Z$ trivializes over $Z'$.  Moreover as
$D:=\Gamma/\Gamma_H\subset Z$ is countable we can arrange that it avoids $\bigcup_{j=1}^\infty \partial U_j$, i.e.
$D\subset Z'$. Now proceed as above.
\end{rmk} 

We keep the setup of Conjecture \ref{p1}. A particular nice class of balls can be defined by the Mostow-decomposition of $G/H$ which we 
now recall. \par Let $K<G$ be a maximal 
compact subgroup of $G$ such that $K_H:=K\cap H$ is a maximal compact subgroup of $H$.  Let 
$N_K(H)$ be the normalizer of $H$ in $K$. 
The Mostow decomposition (see Theorem \ref{Mostow dec} in the appendix)  
of $Z=G/H$ then asserts the existence  of a finite dimensional vector space 
and  $N_K(H)$-module $V\subset \gf$ such that 
\begin{equation} \label{Mostow1}  K \times_{K_H} V \to G/H,  \ \ [k,X]\mapsto k \exp(X)H\end{equation}
is a diffeomorphism.

\par The coordinates given by (\ref{Mostow1}) allow us to define 
a natural family of balls.   Let  $\|\cdot\|$ be a $N_K(H) $-invariant Euclidean norm on $V$ and define $K$-invariant 
balls as follows
$$B_R:=\{ [k,X]\in G/H \mid   \|X\|<R\}\, .$$  
Following \cite{KSS} we call these balls {\it intrinsic}.  Having this terminology we ask: 

\begin{problem} Does  Conjecture \ref{p1} hold true with $\B^Z$ a  family of intrinsic balls ?
\end{problem}

\bigskip  From now on we let $G$ be a real reductive group and $H<G$ be a closed subgroup with finitely many 
connected components and real algebraic Lie algebra.  Further we let $\Gamma<G$ be a lattice 
in the setup of Conjecture \ref{p1}.  Further we set $Y:=G/\Gamma$. 

\par Here is a short history of MTC in the context of Conjecture \ref{p1}. 
MTC was established via harmonic analysis in \cite{DRS}
for symmetric spaces $G/H$ and 
certain families of balls, for lattices with $Y_H$ compact.  In subsequent work \cite{EM} the obstruction that $Y_H$ is compact was removed 
and an ergodic
approach was presented.  The ergodic techniques were refined in  \cite{EMS} and it was 
discovered that MTC holds for a
wider class of reductive spaces: For reductive algebraic groups $G, H$ 
defined over $\Q$ and arithmetic lattices $\Gamma<G(\Q)$  it is sufficient to request that 
the identity component of $H$ is not contained in a proper parabolic subgroup of $G$ 
which is defined over $\Q$ 
and that the balls $B_R$ satisfy a certain condition of {\it non-focusing}.

\par In these works the balls $B_R$ are constructed as follows. 
All spaces considered are affine in the sense that 
there exists a $G$-equivariant embedding of $Z$ into the representation module 
$V$ of a rational representation of $G$. For any such embedding and any norm on the vector 
space $V,$ one then obtains a family of balls $B_R$ on $Z$ by intersection with the metric balls 
in $V$. For symmetric spaces all families of balls produced this way
are suitable for the lattice counting, but in general one needs to 
assume non-focusing in addition.

\par The core of the approach of \cite{EM} was a geometric lemma satisfied by symmetric spaces 
which the authors termed {\it wavefront lemma}.  In their work on p-adic spherical spaces Sakellaridis and Venkatesh \cite{SV}
coined the notion of a wavefront p-adic spherical space and showed that these spaces satisfy a p-adic   version of 
the wavefront lemma.  Wavefront real spherical spaces were introduced in \cite{KKSS1} and it was shown 
in \cite{KKSS1} Lemma 6.3   that 
they satisfy the wavefront lemma  of \cite{EM}.  In particular real spherical spaces of wavefront 
type satisfy MTC for all reasonable families of balls
(see Theorem \ref{MTC wf} below).

\par At this point we remark that all symmetric spaces are real spherical of wavefront type.   
The latter type of spaces 
is going to be the main player of this article.  Before we continue with error term bounds for the lattice counting problem
we insert a section on wavefront spaces and provide a proof of the wavefront lemma.

\section{Real spherical spaces}\label{wfrss}

\par The notational convention for this paper is that we denote Lie groups by upper case Latin letters, e.g. 
$A, B, C$, and their corresponding Lie algebras with lower case German letters, e.g.
$\af, \mathfrak{b}, \cf$.

\par We assume that $Z=G/H$ is {\it real spherical}, that is, a minimal 
parabolic subgroup $P$ of $G$ admits an open orbit on it. By choosing
$P$ suitably we can then arrange that its orbit through the origin $z_0=H\in Z$ is open, 
or equivalently that $\gf=\hf +\pf$. All symmetric spaces are known to be real spherical.

\par According to \cite{KKS} there is a unique parabolic subgroup $Q\supset P$ 
with the 
following two properties: 
\begin{itemize}
\item $QH=PH$.
\item There is a Levi decomposition $Q=LU$ with $L_{\rm n} \subset Q\cap H\subset L$. 
\end{itemize}
Here $L_{\rm n}$ denotes the analytic subgroup of $L$ for which the Lie algebra
$\lf_{\rm n}$ is the sum of all non-compact simple ideals of $\lf$. 

We observe that $L\cap P$ is a minimal parabolic subgroup in $L$. 
It follows that we can choose an Iwasawa decomposition
$L=K_L A_L N_L$ such that $A_LN_L\subset P$. Having fixed 
that we choose a compatible Iwasawa decomposition $G=KAN$, i.e.
$K_L<K, A_L =A$ and $N_L<N$. Then $N$ is the unipotent radical of $P$ and 
with $M=Z_K(A)$ we have the Langlands decomposition $P=MAN$ of $P$.

\par Set $A_H:=A\cap H$ and put $A_Z=A/A_H$. We recall that $\dim A_Z$ is an 
invariant of the real spherical space, called the real rank (see \cite{KKS}) and denoted by 
$\rank_\R Z$.

Attached to $\af$ and $P$
are the root system $\Sigma=\Sigma(\af, \gf)\subset \af^*\bs\{0\}$ 
and its set $\Sigma^+$ of positive roots. For the
associated root space decomposition
$\gf = \oplus_{\alpha\in \{0\}\cup\Sigma}  \gf^\alpha$
we have  $\gf^0=\af\oplus\mf$ and $\nf=\oplus_{\alpha\in \Sigma^+}  \gf^\alpha$.
We write $\Sigma_\uf\subset\Sigma^+$ for the subset of $\af$-weights of $\uf$, and
let $\oline\nf$ and $\oline\uf$ denote the corresponding sum of negative root spaces.
Then 
$$\gf = \oline\uf \oplus \lf\oplus \uf=
\bigoplus_{\alpha\in\Sigma_\uf}  \gf^{-\alpha} \oplus \lf \oplus
\bigoplus_{\beta\in\Sigma_\uf}  \gf^\beta$$

Attached to $Z$ is a geometric invariant, the so-called compression cone.
It is a closed and convex subcone $\af_Z^-$ of $\af_Z$, defined as follows.
According to \cite{KKS} there exists a linear map
\begin{equation}\label{Tmap}
T: \oline \uf \,\to\,   
\lf_H^{\perp}\oplus \uf\subset \bigoplus_{\beta\in \{0\}\cup\Sigma_\uf} \gf^{\beta} 
\end{equation}
such that
\begin{equation}\label{h as graph}
\hf = \lf \cap \hf \oplus \{ \oline X + T(\oline X)\mid \oline X \in \oline\uf\}.
\end{equation}
Here  $\lf_H^{\perp}\subset \af\oplus\mf$ denotes the orthocomplement of $\lf \cap \hf$ in $\lf$.
For $\alpha\in \Sigma_\uf$ and $\beta\in \{0\}\cup\Sigma_\uf$ we denote by 
$T_{\alpha,\beta}: \gf^{-\alpha}\to \gf^\beta$
the map obtained from (\ref{Tmap}) by restriction of $T$ to
$\gf^{-\alpha}$ and projection to $\gf^\beta$.
Then $$T=\sum_{\alpha,\beta} T_{\alpha,\beta}\, .$$ 
For each $Y\in\af_H$ and all  $\oline X\in\oline\uf$ we find
$$\hf\ni [Y, \oline X+ T_{\alpha,\beta}(\oline X)]=
-\alpha(Y)\oline X+ \beta(Y)T_{\alpha,\beta}(\oline X).$$
Hence if $T_{\alpha,\beta}\neq 0$ we see by comparing with
(\ref{h as graph}) that $-\alpha(Y)=\beta(Y)$. Hence 
$${\mathcal M}:=\{\alpha +\beta\mid T_{\alpha, \beta}\neq 0\}$$ 
can be viewed as a subset of $\af_H^\perp=\af_Z^*$.  According to \cite{KK}, Cor.~12.5, the cone 
$C_{\mathcal M}\subset\af_Z^*$ generated by ${\mathcal M}$ is simplicial and  
contains a linearly independent set $S$ of generators such that
${\mathcal M} \subset \N_0[S]$.  Such a set $S$ will be refered to as 
{\it a set of spherical roots for $Z$.} We fix any such set and define
$$\af_Z^-:=\{ Y\in \af\mid  (\forall \sigma\in S) \ \sigma(Y)\leq 0\} \, .$$

\subsection{ Wavefront spaces}
In \cite{KKSS1}, Section 6, we defined the notion of {\it wavefront} for 
real spherical spaces, which will now be recalled. 

If we denote by  $\af^{-}\subset\af $ the closure of the 
negative Weyl chamber, then it is clear that
$\af^-\subset\af_Z^-$. Hence 
\begin{equation*}   
\af^-+\af_H \subset \af_Z^-.
\end{equation*}
Since $T_{\alpha,\beta}$ can be zero for many pairs
of roots $(\alpha,\beta)$, the above inclusion can be proper in general.
By definition
$Z$ is called a {\it wavefront space} if in fact
\begin{equation} \label{wf}  \af^-+\af_H= \af_Z^- .\end{equation}

We shall now give some examples of wavefront spaces.
We first note that if $Z$ is symmetric, say with corresponding
involution $\sigma$ of $\gf$, then the special Iwasawa decomposition
which we requested above can be obtained by choosing a Cartan involution 
$\theta$ that commutes with $\sigma$. Then
$T(\oline X)=\sigma(\oline X)$ for all $\oline X\in\oline u$,
and $T_{\alpha,\beta}$ is non-zero if and only if $\beta=-\sigma\alpha$
in this case. From this it easily follows that we have the equality
in (\ref{wf}), that is, {\it all symmetric spaces are wavefront}.

Next we give a classification of all non-symmetric
wavefront real spherical pairs $(\gf,\hf)$ with $\gf$ simple and $\hf$ reductive. 
These are the pairs of the following Table 1. 
The table is deduced from the classification in \cite{KKPS} together with 
the following result from \cite{KKSS2} Thm.~6.3.

\begin{lemma}\label{lemma compact quotient}
Let $Z=G/H$ be a wavefront real spherical space with $\gf$ simple and $\hf$ reductive. 
Let $H<H^\star<G$ be a closed subgroup such that  
$Z^\star:= G/H^\star$ is unimodular. Then $H^\star/H$ is compact. 
\end{lemma}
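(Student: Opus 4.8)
The plan is to exploit the structure theory recalled above, in particular the parabolic $Q = LU$ attached to the real spherical space $Z = G/H$, together with the rank-one invariant $\dim A_Z$ and the wavefront condition \eqref{wf}. First I would observe that since $H < H^\star < G$ with $\hf \subset \hf^\star \subset \gf$ and $\gf = \hf + \pf$, the larger space $Z^\star = G/H^\star$ is automatically real spherical as well, and one checks that it is again wavefront: the compression cone $\af_{Z^\star}^-$ is the image of $\af_Z^-$ under the projection $\af_Z \to \af_{Z^\star}$ dual to the inclusion $\af_{H}^\perp \supset \af_{H^\star}^\perp$, and since $\af^- + \af_H = \af_Z^-$ one gets $\af^- + \af_{H^\star} = \af_{Z^\star}^-$ after adding $\af_{H^\star}$. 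The key numerical input is then that the fiber $H^\star/H$ carries a left $H^\star$-invariant, hence (since we will see $H^\star/H$ is unimodular) invariant measure, and one wants to force this fiber to be compact.

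The heart of the argument is a dimension/unimodularity count. Because $Z^\star = G/H^\star$ is unimodular by hypothesis and $Z = G/H$ is unimodular (it is the total space over which we are counting, or at any rate $\hf$ reductive in $\gf$ gives this), the fibration $H^\star/H \to G/H \to G/H^\star$ shows that $H^\star/H$ admits an $H^\star$-invariant measure, i.e. $H$ is unimodular in $H^\star$, equivalently the modular characters agree on $H$. Now I would use that $\hf$ is reductive in $\gf$: writing $\hf^\star = \hf \oplus \mathfrak{c}$ for an $H$-stable complement, reductivity of $\hf$ forces $\mathfrak{c}$ to decompose under $\hf$, and the wavefront/spherical constraint on $H^\star$ severely restricts which such $\mathfrak{c}$ can occur. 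Concretely, one shows the non-compact part of $\hf^\star/\hf$ would produce, via the graph description \eqref{h as graph} applied to both $H$ and $H^\star$, a genuine enlargement of the compression cone $\af_{Z^\star}^-$ strictly larger than $\af^- + \af_{H^\star}$, contradicting wavefrontness of $Z^\star$ — unless the enlargement is trivial, i.e. $\mathfrak{c}$ is compactly embedded and $H^\star/H$ is compact.

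The step I expect to be the main obstacle is making precise the passage ``non-compact directions in $\hf^\star/\hf$ enlarge the compression cone.'' One has to track carefully how the linear map $T$ of \eqref{Tmap} changes when $H$ is replaced by $H^\star$: a priori $T^\star$ is a different graph map on a different $\oline\uf$, since the attached parabolic $Q^\star$ for $Z^\star$ may be smaller than $Q$. The clean way around this is probably to avoid $T$ altogether and instead invoke the classification directly: by \cite{KKPS} the wavefront real spherical pairs $(\gf,\hf)$ with $\gf$ simple and $\hf$ reductive are listed, and one simply inspects all intermediate subgroups $H < H^\star < G$ in that finite list, checking case by case that unimodularity of $Z^\star$ forces $H^\star/H$ compact. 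This reduces the lemma to a finite verification against Table 1 and the list in \cite{KKPS}; the structural argument above then serves as the conceptual explanation for why the tabular check comes out the way it does. I would present the proof as: (i) reduce to $Z^\star$ wavefront real spherical; (ii) note $H^\star/H$ unimodular; (iii) invoke the classification of \cite{KKPS} to conclude, or alternatively run the compression-cone argument when one wants a classification-free proof.
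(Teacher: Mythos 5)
First, a point of reference: this paper does not actually prove Lemma \ref{lemma compact quotient} --- it is quoted from \cite{KKSS2}, Thm.~6.3, and is then \emph{used}, together with the classification of \cite{KKPS}, to produce Table \ref{table real spherical}. So there is no in-paper proof to match your proposal against; it has to stand on its own, and as written it has several genuine gaps.

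The structural argument is internally inconsistent. In step (i) you assert that $Z^\star$ is wavefront, but your justification only yields the inclusion $\af^-+\af_{H^\star}\subset\af_{Z^\star}^-$, which holds for \emph{every} real spherical space (it is the trivial inclusion preceding \eqref{wf}); the reverse inclusion --- the actual wavefront condition --- is precisely what could fail when $H$ is enlarged, since compression cones only grow under $\hf\subset\hf^\star$. Your ``heart of the argument'' then derives a contradiction with wavefrontness of $Z^\star$ from noncompact directions in $\hf^\star/\hf$; but if such directions really enlarged the cone beyond $\af^-+\af_{H^\star}$, the correct conclusion would be that $Z^\star$ fails to be wavefront, not that $H^\star/H$ is compact. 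Moreover the implication ``cone not enlarged $\Rightarrow$ $\mathfrak{c}$ compactly embedded'' is asserted rather than proved; $\mathfrak{c}$ could, for instance, consist of nilpotent elements invisible to $\af_Z$. The classification fallback does not repair this: $H^\star$ is only assumed closed with $G/H^\star$ unimodular, not reductive, so the \cite{KKPS} classification of \emph{reductive} real spherical pairs does not enumerate the candidate subgroups $H^\star$; and within this paper Table \ref{table real spherical} is itself derived from the present lemma, so leaning on it would invert the logical order. Finally, the two hypotheses that actually carry the result --- unimodularity of $Z^\star$, which should enter through $\rho$-type conditions such as $\rho|_{\af_{H^\star}}=0$ (cf.\ Lemma 4.2 of \cite{KKSS2}, quoted in Section \ref{wfrss}), and simplicity of $\gf$, without which $H^\star$ could absorb a whole noncompact factor --- never do any load-bearing work in your argument. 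I would consult \cite{KKSS2}, Section 6, for the actual mechanism.
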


\begin{table}[htbp]
$$\begin{array}{rllll}
&\gf&\hf\\
\hline
\yy&\su(p_1+p_2,q_1+q_2)&\aas\su(p_1,q_1)+\su(p_2,q_2)&&(p_1,q_1)\ne(q_2,p_2)\\
\yy&\su(n,1)&\su(n-2q,1){+}\sp(q)+\ff&\ff\subseteq\uf(1)&1\le q\le\frac n2\\
\yy&\sl(n,\HH)&\sl(n-1,\HH)+\ff&\R \subseteq\ff\subseteq\C&n\ge3   \\
\yy&\sl(n,\HH)&\aas\sl(n,\C)&&n\text{ odd}\  \\
\hline
\yy&\sp(p,q)&\aas\su(p,q)&&p\ne q\\
\yy&\sp(p,q)&\aas\sp(p-1,q)&&p\ge1\\
\hline
\yy&\so(2p,2q)&\aas\su(p,q)&&p\ne q\\
\yy&\so(2p+1,2q)&\aas\su(p,q)&&p\ne q-1,q\\
\yy&\so(n,1)&\so(n-2q,1)+\su(q)+\ff&\ff\subseteq\uf(1)&2\le q\le\frac n2\\
\yy&\so(n,1)&\so(n-4q,1)+\sp(q)+\ff&\ff\subseteq\sp(1)&2\le q\le\frac n4\\
\yy&\so(n,1)&\so(n-16,1)+\spin(9)&&n\ge16\\
\yy&\so(n,q)&\so(n-7,q)+\sG_2&&n\ge7, q=1,2\\

\yy&\so(n,q)&\so(n-8,q)+\spin(7)&&n\ge8, q=1,2,3\\
\yy&\so^*(2n)&\aas\so^*(2n-2)&&n\ge5\\
\yy&\so^*(10)&\aas\spin(6,1)\text{ or }\ \ \aas\spin(5,2)\\
\yy&\so(4,3) & \sG_2^1 \\ 
\yy & \so (7,\C) & \sG_2^\C\\ 
\hline
\yy&\sE_6^4&\sl(3,\HH)+\ff&\ff\subseteq\uf(1)\\
\yy&\sE_7^2&\aas\sE_6^2\text{ or }\ \  \aas\sE_6^3\\
\yy&\sF_4^2&\sp(2,1)+\ff&\ff\subseteq\uf(1)\\
\hline
\yy&\sG_2^\C&\sl(3,\C) & \\
\yy&\sG_2^1&\sl(3,\R), \su(2,1) & \\
\end{array}$$
\smallskip
\centerline{\rm\Tabelle{table real spherical}}

Cases 
marked \ $\aas$ result from a symmetric over-algebra $\hf^* \supset \hf$  
such that $\hf +\uf(1)=\hf^*$. 
\end{table} 

\begin{rmk}\label{RR1} We say that $Z$ has real rank one if $\dim \af_Z=1$.  In Table \ref{table real spherical}
the following are of real rank one: 
$$ \hbox{(2), (6), (9) - (11), (12) and (13) for $q=1$, (20) - (22) }\, .$$ 
\end{rmk}

There are many more examples in case $\gf$ is semi-simple and not simple  (see the classification in \cite{KKPS2}). 
However, with the exception of two cases, they are not interesting for the lattice count for the following reason: 
If $G=G_1\times \ldots\times G_n$ is a product of simple groups, then an irreducible lattice can only exist 
if the $\gf_i\otimes_\R\C$ are all isomorphic (see \cite{J}).  
In view of the classification in \cite{KKPS2} one is then left with the group case 
$(\gf, \hf) = (\gf_0\oplus \gf_0, \diag(\gf_0))$ 
and the triple spaces
$$\gf = \so(1,n)\oplus \so(1,n) \oplus\so(1,n) \quad\hbox{and}  \quad \hf=\diag \so(1,n) \qquad  (n\geq 2)\, .$$
The triple spaces feature a lot of interesting irreducible lattices. Here we review an example given in 
\cite{KSS}.  

We let $n=2$ and $G_0=\SO_e(1,2)$ and consider space $Z= G/H$ where 
$G=G_0\times G_0\times G_0$ and $H = \diag(G_0)$. 

We assume that $G_0=\SO_e(1,2)$ is defined by a quadratic form $Q$ which has 
integer coefficients and is anisotropic over $\Q$, for example 
$$Q(x_0, x_1, x_2)= 2x_0^2 - 3x_1^2 - x_2^2\, .$$
Then, according to Borel,   $\Gamma_0=G_0(\Z)$ is a uniform lattice in 
$G_0$. 

\par Next let $k$ be a cubic Galois extension of $\Q$. 
Note that $k$ is totally real. An example of $k$ is the splitting field
of the polynomial $f(x)=  x^3 + x^2 - 2x - 1$.  Let $\sigma$ be a generator of 
the Galois group of $k|\Q$.  Let ${\mathcal O}_k$ be the ring of algebraic 
integers of $k$.  We define $\Gamma<G=G_0^3$ to be the image 
of $G_0({\mathcal O}_k)$ under the embedding 

$$ G_0({\mathcal O}_k)\ni \gamma \mapsto (\gamma, \gamma^\sigma, 
\gamma^{\sigma^2})\in G\, .$$
Then $\Gamma<G$ is a uniform irreducible lattice with $H\cap \Gamma\simeq \Gamma_0$
a uniform lattice in $H\simeq G_0$.

\subsection{The polar decomposition and integral inequalities} 
In this section  $Z=G/H$ is a real spherical space. It is not assumed to be wavefront
unless requested otherwise.

Let us denote by $z_0=H\in Z$ the standard base point.  It will be 
convenient to 
assume that there are complex groups 
$H_\C \subset G_\C$ such that $G\subset G_\C$ is a real form and $H=G\cap H_\C$.  
Set $Z_\C=G_\C/H_\C$  and observe that 
\begin{equation}\label{Z in Z_C} 
Z\hookrightarrow Z_\C, \ \ gH\mapsto gH_\C
\end{equation} 
constitutes a $G$-equivariant embedding.  

We now recall from \cite{KKSS1} (see also \cite{KK}, Sect. 13)  the polar decomposition 
\begin{equation}\label{polar} Z= \Omega A_Z^- \W\cdot z_0\end{equation}
where 
\begin{itemize}
\item $\Omega$ is a compact set of the form $\Omega=FK $ with $F\subset G$ a finite set. 
\item $\W\subset G$ is a finite set with the property that 
 $P\sw\cdot z_0\subset Z$ is open for all $\sw\in\W$.
Moreover, when (\ref{Z in Z_C}) is assumed it can be arranged that
$\W\cdot z_0 \subset T \cdot z_0 \cap Z$
where $T=\exp(i\af)$ and the intersection is taken in $Z_\C$. 
\end{itemize} 

We recall the definition of the volume weight $\vb$ from Section \ref{Sect1} and record the basic estimate 
from \cite{KKSS2}: Let $\rho=\frac12\operatorname{tr}\ad_\uf\in\af^*$.
Assume that $Z$ is unimodular. Then $\rho|_{\af_H}=0$  (cf. Lemma 4.2 in \cite{KKSS2})  and hence $\rho$ descends to a functional on $\af_Z$. Moreover, there exists constants $C_1, C_2>0$ such that 
\begin{equation}\label{vbound}   C_1 a^{-2\rho} \leq\vb(\omega a \sw\cdot z_0)\leq 
C_2 a^{-2\rho} \qquad (\omega\in \Omega, a\in A_Z^-, \sw\in \W)\, .\end{equation}

In the sequel it will be convenient to realize $\af_Z$ via $\af_H^\perp\subset \af$.  We denote by 
$\af_Z^{--}$ the interior of $\af_Z^-$.

 \begin{lemma} \label{lemma int-ineq}{\rm (Integral inequalities)} Let $Z$ be a unimodular real spherical space. Then the following integral 
inequalities hold: 
\begin{enumerate} 
\item \label{intineq2} There exist an element $a_1 \in A_Z^{--}$ such that for all 
$a_0\in a_1 A_Z^-$ the following assertions hold: After replacing $K$ by $\Ad(a_0)^{-1} K$ 
there exists constants $c,C>0$  only depending on the normalization of measures, such that
for all measurable function $f: Z\to \R_{\geq 0}$ with $\supp f \subset K A_Z^- \W\cdot z_0$ 
one has
$$ c \int_Z f(z) \ dz \leq \sum_{\sw \in \W} \int_{K}\int_{A_Z^-}  f (ka\sw\cdot z_0) a^{-2\rho}  \ da \ dk \leq C \int_Z f(z) \ dz $$
where $dz, dk, da$ are Haar measures on $Z, K, A_Z^-$. 

\item\label{intineq1} Assume that  $Z$ is wavefront type. 
Then there exists a compact neighborhood $B$ of $\1$ in $G$ such that 
$Z= B A_Z^-\W\cdot z_0$ and a constant $C>0$, only depending on $B$ 
and the normalization of measures,  such that for all measurable functions 
$f: Z \to \R_{\geq 0}$ one has 
$$\int_Z  f(z)\ dz \leq   C \sum_{\sw\in \W} \int_B \int_{A_Z^-}  f(ga\sw\cdot z_0)  a^{-2\rho}  \ da \ dg $$
with $dg$ a Haar measure on $G$. 
\end{enumerate} 
\end{lemma}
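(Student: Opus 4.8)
The plan is to derive both inequalities from the polar decomposition \eqref{polar}, the volume estimate \eqref{vbound}, and (for part \eqref{intineq1}) the wavefront identity \eqref{wf}, by pushing Haar measure through the map $(k,a,\sw)\mapsto ka\sw\cdot z_0$ and keeping careful track of Jacobians. The key point in both cases is the Weyl-type integration formula: if we integrate over $K\times A_Z^-$ against the pulled-back measure, the Jacobian of $k\exp(X)\sw\cdot z_0$ in the $a=\exp(X)$ direction is, up to bounded factors, $a^{-2\rho}$; this is exactly the content of \eqref{vbound} together with the invariant Sobolev/volume-weight formalism, since $\vb(\omega a\sw\cdot z_0)\asymp a^{-2\rho}$ measures the local volume distortion. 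So the integrals appearing on the middle and right of \eqref{intineq2}, and on the right of \eqref{intineq1}, are genuinely comparable to $\int_Z f\,dz$ provided we control the overlap multiplicities of the decomposition.

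For part \eqref{intineq2} I would argue as follows. First, the upper bound: cover the range of the parametrization and observe that $(k,a,\sw)\mapsto ka\sw\cdot z_0$ has uniformly bounded fibers once we are on the support condition $\supp f\subset KA_Z^-\W\cdot z_0$; pulling $dz$ back along this map and comparing with $dk\,da$ gives $\int_Z f\,dz\le C\sum_\sw\int_K\int_{A_Z^-} f(ka\sw\cdot z_0)a^{-2\rho}\,da\,dk$, i.e.\ the left inequality. The lower bound (right inequality) is the delicate one: here one must ensure the parametrization is \emph{injective enough} — distinct $(k,a,\sw)$ should not be over-counted — and this is where replacing $K$ by $\Ad(a_0)^{-1}K$ for $a_0$ deep in $a_1A_Z^-$ enters. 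Conjugating $K$ by a very dominant element $a_0$ shrinks the compact factor $\Omega=FK$ "transversally" to the $A_Z^-$-direction, so that the chamber-translates $ka A_Z^-\sw\cdot z_0$ become essentially disjoint for distinct $\sw$ and the $a$-coordinate is essentially determined; then the pushforward of $dk\,da\cdot a^{-2\rho}$ dominates a fixed multiple of $dz$ on the support set. I would make this precise by an open-set argument: choose a relatively compact open $\Oc\subset Z$ on which the coordinates $(k,a)$ are a diffeomorphism onto its image (for each fixed $\sw$), with the images for different $\sw$ disjoint, translate by the dominant $a_0$ to fill out a full-measure subset of the support, and use \eqref{vbound} to convert the Euclidean $da$-measure into $dz$ with the weight $a^{-2\rho}$.

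For part \eqref{intineq1}, the wavefront hypothesis gives $\af^-+\af_H=\af_Z^-$, which means the compression cone is exactly the image of the Weyl chamber; geometrically this lets us absorb the $A_Z^-$-translation into a genuine $G=KAN$-motion rather than merely a motion inside $Z_\C$. Concretely, \eqref{wf} allows one to write the polar set $\Omega A_Z^-\W\cdot z_0$ in the form $BA_Z^-\W\cdot z_0$ with $B$ a fixed compact neighborhood of $\1$ in $G$ such that this already exhausts $Z$ (this is essentially the statement recorded just before \eqref{vbound}, combined with \eqref{wf}); then the map $B\times A_Z^-\times\W\to Z$ is surjective, and pulling back $dz$ along it and comparing with $dg\,da$ gives, using \eqref{vbound} for the $a^{-2\rho}$ Jacobian, the single-sided estimate $\int_Z f\,dz\le C\sum_\sw\int_B\int_{A_Z^-} f(ga\sw\cdot z_0)a^{-2\rho}\,da\,dg$. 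No injectivity is needed here since we only want the upper bound, so the argument is softer than in \eqref{intineq2}.

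The main obstacle I expect is the lower bound in \eqref{intineq2}: controlling the multiplicity of the polar parametrization and showing that conjugating $K$ by a sufficiently dominant $a_0$ really does make the pieces $ka A_Z^-\sw\cdot z_0$ essentially disjoint and the $a$-coordinate essentially injective, so that the pushed-forward measure bounds $dz$ from below. This requires a quantitative understanding of how $\Ad(a_0)^{-1}K$ sits relative to the chamber directions as $a_0\to\infty$ in $A_Z^{--}$, and an honest Jacobian computation for the map $K\times A_Z^-\to Z$ that identifies the $a^{-2\rho}$ factor; the rest is bookkeeping with \eqref{polar} and \eqref{vbound}.
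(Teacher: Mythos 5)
Your approach to part~(\ref{intineq2}) is broadly aligned with the paper's: both use the smooth map $K/M_H\times A_Z^-\to Z$, $(k,a)\mapsto ka\sw\cdot z_0$, pull back the invariant measure, and identify the Jacobian as $\asymp a^{-2\rho}$. However, your rationale for conjugating $K$ by $a_0$ is off. You present the shift as making the chamber-translates ``essentially disjoint'' so that one does not over-count, but disjointness/injectivity is not the issue: the map is automatically finite-to-one because it is algebraic, and a bounded multiplicity is perfectly tolerable. What the shift actually accomplishes is (a) ensuring $\kf+\af+\Ad(a\sw)\hf=\gf$ for all $a\in A_Z^-$, so that the map is \'etale, and (b) pushing the argument of the explicit Jacobian $J_\sw(a)=a^{-2\rho}(1+R(a))$ deep enough into the cone that the error term $R$ is uniformly small, giving $J_\sw(aa_1)\asymp a^{-2\rho}$. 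Without that, $J_\sw$ could vanish or even become negative near the boundary of $A_Z^-$, and the lower bound would fail. So the mechanism is Jacobian positivity, not injectivity.

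For part~(\ref{intineq1}) there is a genuine gap. You say one simply ``pulls back $dz$ along $B\times A_Z^-\to Z$ and compares with $dg\,da$, using \eqref{vbound} for the $a^{-2\rho}$ Jacobian,'' and that the argument is softer than in~(\ref{intineq2}) because no injectivity is needed. But the map $B\times A_Z^-\to Z$ is not even close to being a local diffeomorphism: $\dim(B\times A_Z^-)=\dim G+\operatorname{rank}_\R Z$ exceeds $\dim Z$ by $\dim H$, so the fibers are $\dim H$-dimensional submanifolds of varying size, and there is no ``Jacobian'' whose value is $a^{-2\rho}$. The weight $a^{-2\rho}$ actually arises because the $H$-fiber over a point $ba\cdot z_0$ grows like $a^{2\rho}$, and this growth must be controlled from below. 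That is precisely the content of Lemma \ref{volumes project}: one shows $\vol_H\bigl((\Ad(a)^{-1}B_1)\cap H\bigr)\ge C\,a^{2\rho}$ by decomposing $\hf$ via the map $T$ in \eqref{Tmap} and using the wavefront property to see which root-space directions of $\hf$ are contracted by $\Ad(a)^{-1}$ and by how much; this converts to the volume comparison $\vol_Z(Ea\cdot z_0)\le C\,a^{-2\rho}\vol_G(E_a)$, which is then fed into a Fubini argument over a small $A_Z$-window. Your proposal skips exactly this step, which is where the wavefront hypothesis enters in a nontrivial way; without it, the inequality you want to write down does not follow.
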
 

The proof of Lemma \ref{lemma int-ineq} (\ref{intineq1}) rests on the following geometric lemma which is 
of independent interest.

\begin{lemma} \label{volumes project}  {\rm (Volumes of balls under projection)} Assume that 
$Z=G/H$ is a unimodular real spherical space of wavefront type.  Fix  a compact neighborhood $B_1$ of $\1$ in $G$. 
For $E\subset B_1$ a measurable subset and $a\in A_Z^-$ 
we let $E_a \subset B_1^2= B_1\cdot B_1$ be the inverse image of $Ea\cdot z_0$ under the quotient 
morphism 
$$ B_1^2 \to  B_1^2 a\cdot z_0, \ \ b\mapsto ba \cdot z_0\, .$$
Then there exists a constant $C=C(B_1)>0$ such that 

\begin{equation}\label{claim} \vol_Z(Ea\cdot z_0) \leq C a^{-2\rho}  \vol_G(E_a ) \qquad (a \in A_Z^-, E\subset B_1\ \text{measurable})\, .\end{equation}
\end{lemma}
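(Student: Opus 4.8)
The plan is to compute $\vol_Z(Ea\cdot z_0)$ by pulling the measure on $Z$ back to $G$ via the fibration $G\to Z$, and to compare with $\vol_G(E_a)$ through a Jacobian/coarea argument. First I would fix the measure decomposition (\ref{measure compatible}) for $G\to Z=G/H$, so that integrating the characteristic function of $E_a$ over $G$ splits as an integral over $Z$ of the $\mu_H$-measure of the fibers of the map $b\mapsto ba\cdot z_0$ restricted to $E_a$. Concretely, $\vol_G(E_a)=\int_{B_1^2a\cdot z_0}\mu_H\big((E_a)_{w}\big)\,d\mu_Z(w)$, where $(E_a)_w$ is the fiber over $w$. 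Since $E_a$ is the full inverse image in $B_1^2$ of $Ea\cdot z_0$, the fiber over a point $w=ba\cdot z_0$ is an open subset of the coset $bH\cap B_1^2$, and one needs a lower bound $\mu_H\big((E_a)_w\big)\ge c\,a^{-2\rho}$ for all relevant $w$, with $c=c(B_1)>0$ independent of $a\in A_Z^-$ and of $E$. Granting such a bound, $\vol_G(E_a)\ge c\,a^{-2\rho}\,\vol_Z(Ea\cdot z_0)$, which is exactly (\ref{claim}) after renaming the constant.

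The heart of the matter is therefore the uniform lower bound on the fiber volume, and this is where the wavefront hypothesis enters. The point is that the fiber over $ba\cdot z_0$ inside $B_1^2$ contains a translate of $b\cdot\big(a H a^{-1}\cap V_a\big)$ for a suitable fixed neighborhood $V_a$; using the graph description (\ref{h as graph}) of $\hf$, conjugation by $a\in A_Z^-$ on $H$ contracts the $\oline\uf$-directions and the weight of this contraction on the relevant piece of $H$ is governed by the spherical roots, whose relevant combination is $-2\rho$ precisely because $\rho|_{\af_H}=0$ and $Z$ is wavefront (so that $\af^-+\af_H=\af_Z^-$, which lets one write $a=a'a''$ with $a'\in\exp(\af^-)$ and $a''\in A_H$, the $A_H$-part acting trivially on $Z$). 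This gives $\mu_H\big(aHa^{-1}\cap V\big)\gtrsim a^{-2\rho}$ for a fixed small neighborhood $V$ of $\1$ in $H$, uniformly in $a\in A_Z^-$; enlarging to $B_1^2$ and using that $B_1$ is a neighborhood of $\1$ only helps. I expect the careful bookkeeping of this contraction estimate — keeping the neighborhood $V$ fixed and the constant uniform in $a$, and handling the fact that $E_a\subset B_1^2$ rather than all of $G$ — to be the main obstacle; the measure-theoretic splitting itself is routine.

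A technical subtlety to address is that $b\mapsto ba\cdot z_0$ is only a submersion onto its image, and $Ea\cdot z_0$ need not be open, so the coarea statement $\vol_G(E_a)=\int \mu_H(\text{fiber})\,d\mu_Z$ should be justified directly from (\ref{measure compatible}) applied to $f=\mathbf 1_{E_a}$ together with the observation that the cross-section map $Z\to G$ can be chosen measurably and that for $w\in Ea\cdot z_0$ the fiber $(E_a)_w$ is exactly $\{g\in B_1^2: gH_\C=wH_\C\}$ translated to a subset of $H$ via a chosen representative. Once this is in place, combining it with the fiber lower bound yields (\ref{claim}), and I would record the resulting constant $C=C(B_1)=c^{-1}$ to match the statement.
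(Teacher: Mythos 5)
Your strategy is the same as the paper's: disintegrate $\vol_G(E_a)$ over the fibers of $b\mapsto ba\cdot z_0$ using (\ref{measure compatible}), and bound the fiber volumes from below by analysing how conjugation by $a$ acts on the graph realization (\ref{h as graph}) of $\hf$. The paper implements exactly this by observing $E_a\supset E\,(B_1\cap\Ad(a)H)$ and reducing everything to the single estimate $\vol_H\big(\Ad(a)^{-1}B_1\cap H\big)\geq C\,a^{2\rho}$, proved by the contraction bookkeeping you outline.

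However, your key intermediate bound has the wrong sign in the exponent, and as stated the chain of inequalities does not deliver (\ref{claim}). For $a\in A_Z^-$ one has $a^{2\rho}\le 1\le a^{-2\rho}$. The fiber over $ba\cdot z_0$, transported to $H$ as $\{h\in H: bah\in E_a a\}$, contains $\Ad(a)^{-1}B_1\cap H$, whose $\mu_H$-volume is comparable to $\prod_i a^{\alpha_i}=a^{2\rho}$ --- a quantity tending to $0$ along $A_Z^-$, because $\Ad(a)$ expands the $\oline\uf$-directions of the graph of $T$, so only a shrinking piece of $H$ fits inside the fixed compact set. Your claimed bound $\mu_H((E_a)_w)\ge c\,a^{-2\rho}$ is therefore false (the fiber volume is in fact also $\le C\,a^{2\rho}$), and even if one grants it, the deduction ``$\vol_G(E_a)\ge c\,a^{-2\rho}\vol_Z(Ea\cdot z_0)$, which is exactly (\ref{claim})'' rearranges to $\vol_Z(Ea\cdot z_0)\le c^{-1}a^{2\rho}\vol_G(E_a)$, which is not (\ref{claim}). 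With the corrected fiber bound $\ge c\,a^{2\rho}$ the argument closes and coincides with the paper's proof. A minor additional point: the exponent $2\rho$ arises as the sum of the $\af$-weights of $\oline\uf$ (the roots in $\Sigma_\uf$ counted with multiplicity), not as a combination of spherical roots.
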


\begin{proof} Since $E B_1 \subset B_1^2$ we first record that

\begin{equation}\label{B_0^a}  E_a \supset E (B_1 \cap \Ad(a)H) =  Ea( \Ad(a)^{-1} B_1 \cap H)a^{-1}\, .\end{equation}

\par If $f$ is a compactly supported measurable function on $G$ we denote by $f^H$ its $H$-average:
$f^H(gH)= \int_H f(gh) \ dh$.  The claim is then a consequence of 
\begin{equation} \label{claim modi} \1_{Ea\cdot z_0} \leq  C a^{-2\rho}\cdot  \1_{E_a  a}^H\end{equation} 
as functions on $Z$, uniformly for all $E\subset B_1$ and all $a\in A_Z^-$. 
In fact, integrating (\ref{claim modi}) over $Z$ yields  (\ref{claim}).
Now, for $b\in E$  we record 

\begin{eqnarray*}  \1_{E_aa}^H (b a\cdot z_0) &=&\vol_H\{ h\in H\mid bah \in  E_a a\}\\
&\geq& \vol_H \{ h\in H \mid  ba h\in E a ( \Ad(a)^{-1}B_1 \cap H) \}\\ 
&\geq & \vol_H( (\Ad(a)^{-1}B_1) \cap H)\end{eqnarray*}
where we used (\ref{B_0^a}) for the first inequality.  Thus the lemma will follow 
from 
\begin{equation} \label{vol-claim}\vol_H( (\Ad(a)^{-1}B_1) \cap H) \geq  C a^{2\rho}\qquad (a \in A_Z^-)\end{equation} 
for a constant $C$ only depending on $B_1$.  To see that let $B_U$, $B_{\oline U}$, $B_L$ be 
convex compact neighborhoods of $\1$ in $U, \oline U, L$ such that $B_1 \supset B_U B_L B_{\oline U}$. 
Because $Z$ is assumed to be wavefront, we note that $\Ad(a)^{-1}B_U \supset B_U$,  $\Ad(a)^{-1} B_{\oline U} \subset B_{\oline U}$ 
and $\Ad(a)^{-1}B_L =B_L$. 

Thus, if we replace $B_1$ by $B_U B_L B_{\oline U}$ we get 
\begin{equation}\label{balls compressed}  \Ad(a)^{-1} B_1 \supset B_U B_L (\Ad(a)^{-1}B_{\oline U})\, . \end{equation}

Recall now the vector decomposition $\hf= \lf\cap \hf + {\mathcal G}(T)$ with 
${\mathcal G}(T)\subset \oline \uf + (\uf +  \lf_H^\perp)$ the graph of the linear map 
$T$, see (\ref{Tmap}).  Let now $Y_1, \ldots, Y_d$ be  basis of $\lf\cap \hf$  and $U_1, \ldots, U_l$
be a basis of ${\mathcal G}(T)$ with each $U_i$ of the form $U_i = X_{-\alpha_i} + T(X_{ -\alpha_i})$
for some $X_{-\alpha_i} \in \gf^{-\alpha_i} \subset \oline \uf$. Then it follows from (\ref{balls compressed})
that there exists an $\e>0$ such that 

$$ \exp \left(  \prod_{i=1}^d (-\e,\e)Y_j \times  \prod_{j=1}^l  a^{\alpha_i}  (-\e, \e) U_i\right)\subset
(\Ad(a)^{-1}B_1) \cap H\qquad (a\in A_Z^-)\,.$$
This establishes (\ref{vol-claim}) and hence the lemma.
\end{proof}

\begin{proof}[Proof of Lemma \ref{lemma int-ineq}] We first prove (\ref{intineq1}).  The arguments which follow will show that 
it is no restriction to assume that $\W=\{\1\}$.  So let us assume that in order to save 
notation. 
\par We start with a compact neighborhood $B_1$ of $\1$ in $G$ such that $B_1A_Z^-\cdot z_0=Z$. Let $E\subset B_1$ be a measurable subset.  For $a\in A_Z^-$ 
we let $E_a \subset B_1^2= B_1\cdot B_1$ be as in Lemma \ref{volumes project} above. 

\par Next let $B_{A_Z}$ be a fixed compact neighborhood of $\1$ in $A_Z$.  We assume that 
$B_{A_Z}=B_{A_Z}^{-1}\subset B_1$. 
We wish to estimate the integral 
$$I_1(a, E):=\int_{B_1^3}  \int_{a B_{A_Z}}    \1_{Ea\cdot z_0} (ba' ) (a')^{-2\rho} \ da'\ db\, .$$
First note that there exists a constant $C$ depending on the size of $B_{A_Z}$ such that 
$$I_1(a,E)\geq I_2(a, E):=C a^{-2\rho}   \int_{B_1^3} \int_{B_{A_Z}}  \1_{E a\cdot z_0}  (ba'a)  \ da' \ db\, .$$
Fubini then gives 
\begin{equation}\label{int2a}  I_2(a,E)= 
C a^{-2\rho}\int_{B_{A_Z}} \int_{B_1^3}   \1_{E a\cdot z_0}  (ba'a)  \ db \ da'\, .\end{equation} 
Now observe for $a'\in B_{A_Z}$

$$\{ b \in B_1^3\mid   ba'a \cdot z_0\in E a\cdot z_0\} \supset \{ b \in B_1^2 (a')^{-1}\mid  b a' a\cdot
z_0\in E a\cdot z_0\} = E_a  (a')^{-1}\, .$$
Hence we get from (\ref{int2a}) and (\ref{claim}) that 
\begin{eqnarray*}  I_2(a,E)
&\geq&C a^{-2\rho}  \int_{B_{A_Z}} \vol_G(E_a) \ da'\\
&=&C a^{-2\rho}   \vol_{A_Z} (B_{A_Z}) \vol_G(E_a) \\
&\geq&C \vol_Z(Ea\cdot z_0)\, .\end{eqnarray*}

In particular we have shown that 
$$\int_Z  \1_{Ea\cdot z_0} (z) \ dz \leq C \int_{B_1^3}  \int_{B_{A_Z} A_Z^-}  \1_{Ea\cdot z_0} (ba')  (a')^{-2\rho}
\ da' \ db\, .$$
Since $E\subset B_1$ was arbitary and $B_1 A_Z^-\cdot z_0=Z$ we thus get for all 
measurable functions $f\geq 0$ on $Z$ that 
$$\int_Z  f(z) \ dz \leq C \int_{B_1^3}  \int_{B_{A_Z} A_Z^-}  f(ba\cdot z_0)  a^{-2\rho}
\ da\  db\, .$$
This implies with $B:=B_1^4$ that 
$$\int_Z  f(z) \ dz \leq C \int_{B}  \int_{A_Z^-}  f(ba\cdot z_0)  a^{-2\rho}
\ da\  db$$
and completes the proof for (\ref{intineq1}) -- notice that the flow of arguments 
showed we can squeeze elements $\sw\in\W$ between $a\cdot z_0$ without doing any harm.

\par We move to the proof of (\ref{intineq2}) which is implicitly contained in the proof of 
\cite{KKS2}, Th.~8.5.  We summarize the argument, assuming for simplicity the existence of (\ref{Z in Z_C}).

\par Fix $\sw\in \W$, let $M_H:= M \cap H$ and consider the smooth map 
$$ \Psi_\sw:  K/ M_H  \times A_Z^-\to Z, \ \ (kM_H, a)\mapsto k a\sw\cdot z_0\, .$$
If we assume that $K$ is chosen such that $\kf +\af + \Ad(a\sw)\hf=\gf$ for all $a\in A_Z^-$ (which can be achieved 
by replacing $K$ with $\tilde a_0^{-1}K\tilde a_0$ for an appropriate $\tilde a_0\in A_Z^{--}\subset A$ (see (6.4) in \cite{KKS2}), then $\Psi_\sw$ has 
everywhere invertible differential, i.e. $\Psi_\sw$ is \'etale.  Moreover, as $\Psi_\sw$ arises from an algebraic map defined over $\R$, it is finite to one. Hence if we pull the invariant differential form on $Z$ back to $K/M_H \times A_Z^-$ via $\Psi_\sw$ we obtain that  

\begin{equation*} \int_{K}\int_{A_Z^-}  f(ka\sw\cdot z_0) J_{\sw}(a)  \ da \ dk \leq \int_Z f(z) \ dz \qquad (f\geq 0, f\ \text{measurable} ))\end{equation*}
for a positive function $J_\sw$ on $A_Z^-$.   Moreover, if $\supp f \subset KA_Z^-\sw\cdot z_0$
we obtain a constant $c>0$ such that 
\begin{equation*} c \int f(z) \ dz \leq \int_{K}\int_{A_Z^-}  f(ka\sw\cdot z_0) J_{\sw}(a)  \ da \ dk \end{equation*}
holds as well. 

\par Up to a constant factor $J_\sw(a)$ equals $|\det d\Psi_\sw(\1 M_H, a)|$ for which 
a formula was provided in the proof of Prop. 4.5  of \cite{KKSS2}: 

$$J_\sw(a) = |  \Ad(a\sw){\bf X} \wedge {\bf Y}\wedge {\bf U}|$$
where ${\bf X} =X_1\wedge\ldots \wedge X_d$ for a basis $X_1, \ldots, X_d$ of $\hf$,  ${\bf Y}=Y_1\wedge\dots \wedge Y_r$ associated to a basis of $\af_Z$ (viewed as a subspace of $\af$) and finally 
${\bf U}=U_1\wedge \ldots\wedge U_k$  with $U_1, \ldots, U_k$ a basis of $\mf_H^{\perp_\kf}\subset \kf$. 
Now for any $X\in \af_Z^{--}$ we recall from Remark 2.2 in \cite{KKSS2} that 
$$\lim_{t\to \infty} e^{2\rho (X)} e^{t \ad X} \Ad(\sw) {\bf X}
= {\bf X}_{\lim{}}$$
with ${\bf X}_{\lim{}}$ corresponding to a basis of the maximal deformation $\hf_{\lim{}}=\lf \cap \hf +\oline{\uf}$
of $\hf$. In more quantitative terms we have (see (4.4) in \cite{KKSS2})  up to normalization
$$J_\sw(a)  = a^{-2\rho}(1 + R(a))\qquad (a\in A_Z^-)$$ 
with $|R(a)|\leq C \max_{\alpha\in S} a^\alpha$ with $S$ a set of spherical roots.  In particular, we obtain 
$$J_\sw(aa_1) \asymp a^{-2\rho} \qquad (a\in A_Z^-)$$
provided that $a_1\in A_Z^{--}$ lies deep enough in the Weyl chamber.  Finally notice that the $a_1$-shift in the argument 
of $J_\sw$ above corresponds to replacing $K$ by $\Ad(a_1)^{-1}K$. 
This proves (\ref{intineq2}).

\end{proof}

In the sequel we request that $K$ is chosen such that the conclusion of 
Lemma \ref{lemma int-ineq} (\ref{intineq2})
holds.
\smallskip \par {\it For the rest of this article we assume that $Z$ is a real spherical space of wavefront type.}

\subsection{Property I}

We briefly recall some results and notions from \cite{KKSS2} and \cite{KSS}.

\par Let $(\pi,\Hc_{\pi})$ be a unitary irreducible
representation of $G$.  We denote by $\Hc_\pi^{\infty}$ the $G$-Fr\'echet module of smooth vectors
and by $\Hc_\pi^{-\infty}$ its dual. Elements in $\Hc_\pi^{-\infty}$ are called 
{\it distribution vectors}. It is known (see \cite{KS}) that for a real spherical space
the space $(\Hc_\pi ^{-\infty})^H$ of $H$-fixed distribution vectors is finite dimensional. 
The representation $\pi$ is said to be
$H$-distinguished if
$(\Hc_\pi^{-\infty})^H \ne \{0\}$.

\par Let $\eta \in (\Hc_\pi ^{-\infty})^H$ and $H_\eta<G$ the 
stabilizer of $\eta$. 
Note that $H<H_\eta$ and set $Z_\eta:= G/H_\eta$.  
With regard to $\eta$ and $v\in \Hc^\infty$ we form the generalized matrix-coefficient 
$$m_{v,\eta}(gH):= \eta(\pi(g^{-1}) v) \qquad (g\in G)$$
which is a smooth function on $Z_\eta$.

We recall the following facts from \cite{KKSS2} Thm.~7.6 and Prop.~7.7:

\begin{prop} \label{propI}Let $Z$ be a wavefront real spherical space with $H<G$ reductive.
Then the following assertions hold: 
\begin{enumerate} 
\item\label{one} Every generalized matrix coefficient $m_{v,\eta}$ as above is bounded.
\item\label{two} Let $(\pi, \Hc)$ be a unitary irreducible representation of $G$ and 
let $\eta \in (\Hc_\pi^{-\infty})^H$.  Then $Z_\eta$ is unimodular, i.e. carries a positive $G$-invariant Radon measure,    and there exists $1\leq p <\infty$ such that
$m_{v,\eta}\in L^p(Z_\eta)$ for all $v \in \Hc_\pi^\infty$. \end{enumerate}
\end{prop}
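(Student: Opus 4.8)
\textbf{Proof proposal for Proposition \ref{propI}.}

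The plan is to deduce both statements from the polar decomposition (\ref{polar}), the volume estimates (\ref{vbound}), the integral inequalities of Lemma \ref{lemma int-ineq}, and Bernstein's invariant Sobolev lemma (\ref{ISL}). First I would address \emph{unimodularity of $Z_\eta$}: since $\eta$ is $H_\eta$-fixed, the matrix coefficient $m_{v,\eta}$ is a well-defined smooth function on $Z_\eta$, and because $\eta$ transforms under the modular character of $H_\eta$ by a scalar while being fixed, one sees that the modular function of $H_\eta$ relative to $G$ must be trivial; more robustly, $Z_\eta = G/H_\eta$ is a real spherical space (as a quotient of the real spherical $Z=G/H$, since $P$ still has an open orbit) and it is wavefront (the compression cone can only grow under passing to a larger subgroup, and one checks $\af^- + \af_{H_\eta} = \af_{Z_\eta}^-$). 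Replacing $Z$ by $Z_\eta$ throughout, I may therefore assume $\eta$ has trivial stabilizer, i.e. $H_\eta = H$, and that $Z$ itself is unimodular and wavefront with $H$ reductive; then $\vb$ is bounded below and (\ref{ISL2}) applies.

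For part (\ref{one}) — boundedness of $m_{v,\eta}$ — the key input is the leading asymptotics along the compression cone. Using the polar decomposition $Z = \Omega A_Z^- \W \cdot z_0$, it suffices to bound $m_{v,\eta}(\omega a \sw\cdot z_0)$ for $\omega\in\Omega$, $a\in A_Z^-$, $\sw\in\W$. Since $\Omega = FK$ is compact and $m_{v,\eta}(k\, \cdot)$ is controlled uniformly for $k\in K$ by a Sobolev norm of $v$ (the $G$-module structure on $\Hc_\pi^\infty$), the problem reduces to estimating $\eta(\pi(\sw^{-1}a^{-1})v)$ as $a\to\infty$ in $A_Z^-$. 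Here one invokes the exponential decay of generalized matrix coefficients on a real spherical space: $\eta$ being $H$-fixed forces $m_{v,\eta}$ to decay (or at worst stay bounded) along $A_Z^-$, with leading exponent governed by $-\rho$ relative to the measure; combined with the lower volume bound $\vb \gtrsim 1$, boundedness follows. The cleanest route is to cite the asymptotic expansion of $m_{v,\eta}$ along $A_Z^-$ (the constant-term theory, or directly \cite{KKSS2}) which gives $|m_{v,\eta}(a\sw\cdot z_0)| \leq C \|v\|_{\infty;l}$ uniformly; then the $\Omega$-part adds only a bounded Sobolev factor.

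For part (\ref{two}) — that $m_{v,\eta}\in L^p(Z)$ for some $p<\infty$ — I would combine the pointwise bound from part (\ref{one}) with a refined asymptotic expansion showing that along $A_Z^-$ one actually has $|m_{v,\eta}(\omega a\sw\cdot z_0)| \leq C\, a^{\lambda}$ for some $\lambda$ strictly dominating $-\rho$ in the appropriate sense (strict decay in the interior directions, a consequence of $H$-invariance and the structure of distribution vectors, cf. \cite{KKSS2} Prop.~7.7). Then, by Lemma \ref{lemma int-ineq}(\ref{intineq1}),
$$\int_Z |m_{v,\eta}(z)|^p\, dz \leq C\sum_{\sw\in\W}\int_B\int_{A_Z^-} |m_{v,\eta}(ga\sw\cdot z_0)|^p\, a^{-2\rho}\, da\, dg \leq C' \int_{A_Z^-} a^{p\lambda - 2\rho}\, da,$$
and the last integral converges once $p$ is large enough that $p\lambda$ beats $2\rho$ on the closed cone $\af_Z^-$, using that $\lambda$ lies strictly inside the cone dual to $\af_Z^-$. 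The main obstacle is establishing the strict-decay asymptotic rather than merely boundedness: boundedness alone only gives $\int a^{-2\rho}\,da$ which diverges, so one genuinely needs the finer information that $H$-fixed functionals produce matrix coefficients decaying faster than the critical rate $-\rho$ in the non-central directions of $A_Z^-$ — this is exactly the content imported from \cite{KKSS2}, and citing it is the natural way to discharge this step.
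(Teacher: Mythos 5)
The paper offers no proof of this proposition at all: it is stated verbatim as a recollection of \cite{KKSS2}, Thm.~7.6 and Prop.~7.7. Your proposal, read closely, also ends up deferring every essential analytic input to \cite{KKSS2} --- the boundedness along $A_Z^-$ in part (1) and the strictly sub-critical decay needed for $L^p$-integrability in part (2) --- so as a reduction to the cited results it is consistent with what the paper actually does. Your remark that boundedness alone cannot yield integrability, because $\int_{A_Z^-} a^{-2\rho}\,da$ diverges against the density coming from Lemma \ref{lemma int-ineq}, correctly locates where the real content lies.

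However, the steps you do try to argue independently do not hold up as written. First, unimodularity of $Z_\eta$ does not follow from ``$\eta$ transforms under the modular character of $H_\eta$ by a scalar while being fixed'': an $H_\eta$-fixed distribution vector is just a fixed point of the dual action, and its existence places no a priori constraint on the ratio of the modular functions of $G$ and $H_\eta$; this is a genuine theorem in \cite{KKSS2}. Likewise ``the compression cone can only grow'' and ``one checks $\af^-+\af_{H_\eta}=\af_{Z_\eta}^-$'' are assertions, not checks. Second, your argument for part (1) is circular: you invoke ``$\eta$ being $H$-fixed forces $m_{v,\eta}$ to decay (or at worst stay bounded) along $A_Z^-$'' in order to conclude boundedness, but that is precisely the statement to be proved, and it is false for general (non-wavefront) real spherical spaces; any genuine proof must use the wavefront hypothesis (\ref{wf}), e.g.\ via $A_Z^-\cdot z_0=A^-\cdot z_0$ as in (\ref{AZ}) together with exponent bounds for unitary representations along $A^-$, which your sketch never does. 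In short: as a pointer to \cite{KKSS2} the proposal matches the paper; as a self-contained argument it has gaps exactly at the points where it attempts to be self-contained.
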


The property of $Z=G/H$ that (\ref{two}) is valid for all 
$\pi$ and $\eta $ as above is denoted {\it Property (I)} in \cite{KKSS2}. 
Note that (\ref{one}) and (\ref{two}) together imply $m_{v,\eta}\in L^{q}(Z_\eta)$ for $q>p$.
Assuming Property (I) we can then make the following notation.

\begin{definition}\label{defi p_H}
Given $\pi$ as above, we define $p_{H}(\pi)$
as the smallest index $\ge 1$ such that
all $K$-finite generalized matrix coefficients $m_{v, \eta}$
with $\eta\in (\Hc_\pi^{-\infty})^H $
belong to $L^{p}(Z_\eta)$ for any $p>p_{H}(\pi)$.
\end{definition}

Notice that $m_{v,\eta}$ belongs to $L^p(Z_\eta)$ for
all $K$-finite vectors $v$ once that this is the case for
some non-trivial such vector $v$, see \cite{KKSS2} Lemma 7.2. 
For example, this could be the trivial $K$-type,
if it exists in $\pi$.

It follows from Proposition \ref{propI} (\ref{two}) and 
finite dimensionality of
$(\Hc_\pi^{-\infty})^H$
that $p_H(\pi)<\infty$.
We say that $\pi$ is $H$-tempered if $p_{H}(\pi)=2$.
Note that if $\pi$ is not $H$-distinguished
(that is, if $(\Hc_\pi^{-\infty})^H=0$)
then $p_{H}(\pi)=1$.

\subsection{The wavefront lemma} 
Denote $A^-=\exp(\af^-)$ and notice that the wavefront property (\ref{wf}) implies that 
\begin{equation}\label{AZ}
A^-_Z\cdot z_0 = A^-\cdot z_0.
\end{equation}

With that we obtain a generalization of 
 the ``wavefront lemma'' of Eskin-McMullen 
(\cite{EM} Theorem 3.1), see Lemma 6.3 in \cite{KKSS1}. 

\begin{lemma}{\rm (Wavefront Lemma)} \label{wfl} Suppose that $Z=G/H$ is a wavefront real spherical space. 
Then there exists a closed subset $E\subset G$  with the following properties.
\begin{enumerate}
\item $E\to G/H$ is surjective.
\item The family of left translations of $Z$ by elements $g\in E$
is equicontinuous at $z_0$, that is,
for  every  neighborhood ${\mathcal V}$ of $\1$ in $G$, there exists 
a neighborhood ${\mathcal U}$ of $\1$ in $G$ such that 
$$ z\in {\mathcal U} \cdot z_0 \Rightarrow g\cdot z \in {\mathcal V}g\cdot z_0$$
for all $g\in E$.
\end{enumerate}
\end{lemma}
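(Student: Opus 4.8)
The plan is to produce the set $E$ explicitly from the polar decomposition and the wavefront property, and to reduce equicontinuity to a single computation with the compression cone. Recall from \eqref{polar} that $Z=\Omega A_Z^-\W\cdot z_0$ with $\Omega=FK$ compact and $\W$ finite; since $K\cdot z_0$ is compact and $F,\W$ are finite, the role of $E$ will essentially be played by $A_Z^-$ (translated by the fixed finite sets), so set $E:=FKA_Z^-\W$, or more economically $E:=KA^-\W$ using \eqref{AZ}. Property (1) is then immediate from \eqref{polar}. For Property (2), since equicontinuity is stable under composing with the fixed compact/finite data $F,K,\W$ (left translation by a compact set is automatically equicontinuous, and one absorbs these factors into ${\mathcal V}$ and ${\mathcal U}$ at the cost of enlarging/shrinking them), the whole statement reduces to showing: the family $\{a\cdot : a\in A_Z^-\}$ is equicontinuous at $z_0$.

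First I would unwind what equicontinuity at $z_0$ means for the family indexed by $a\in A_Z^-$. Given a neighborhood ${\mathcal V}$ of $\1$, I must find a neighborhood ${\mathcal U}$ of $\1$ such that for all $a\in A_Z^-$, if $z\in {\mathcal U}\cdot z_0$, then $a\cdot z\in {\mathcal V} a\cdot z_0$. Writing $z=u\cdot z_0$ with $u\in{\mathcal U}$, the conclusion $au\cdot z_0\in {\mathcal V}a\cdot z_0$ is equivalent to $a u a^{-1}\in {\mathcal V}\cdot(\text{stabilizer slack})$, i.e. it suffices that $\operatorname{Ad}(a)$ applied to a fixed small neighborhood in $G$ stays inside ${\mathcal V}$ modulo $H$. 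So the key point is: there is a neighborhood ${\mathcal U}$ of $\1$ in $G$ with $a{\mathcal U}a^{-1}\subset {\mathcal V}\cdot(a H a^{-1}\cap\,\cdot)$ for all $a\in A_Z^-$ — more precisely, that $aua^{-1}\cdot z_0$ stays in ${\mathcal V}\cdot z_0$ for $u$ in a fixed small neighborhood, uniformly in $a\in A_Z^-$.

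The heart of the matter is the contraction behavior of $\operatorname{Ad}(a)$, $a\in A_Z^-$, on $\gf$ relative to $\hf$. Using the decomposition $\gf=\oline\uf\oplus\lf\oplus\uf$ and the graph description \eqref{h as graph} of $\hf$, together with the wavefront identity \eqref{wf} (equivalently \eqref{AZ}, so we may take $a\in A^-$), one checks: $\operatorname{Ad}(a)$ contracts $\uf$ (since $a^{-\beta}\le 1$ for $\beta\in\Sigma_\uf^+$, $a\in A^-$), acts by bounded operators on $\lf$ intertwined with $A_Z$-translation, and on $\oline\uf$ it expands — but the expanding directions in $\oline\uf$ are precisely the ones that, via the map $T$, lie along $\hf$ (this is exactly the content of the wavefront condition: $\af^-+\af_H=\af_Z^-$ forces every expansion on $\oline\uf$ to be matched by the $\af_H$-direction, so modulo $H$ it disappears). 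Hence for a neighborhood ${\mathcal U}$ of the form $\exp(\text{small ball in }\oline\uf)\exp(\text{small ball in }\lf)\exp(\text{small ball in }\uf)$, the set $a{\mathcal U}a^{-1}\cdot z_0$ is contained, uniformly in $a\in A^-$, in a fixed small neighborhood of $z_0$ of comparable size — choosing ${\mathcal U}$ adapted to ${\mathcal V}$ this gives $au a^{-1}\cdot z_0\in{\mathcal V}\cdot z_0$.

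The main obstacle I anticipate is handling the $\oline\uf$ (expanding) component cleanly: one must argue that although $\operatorname{Ad}(a)$ genuinely blows up a vector $\oline X\in\gf^{-\alpha}$ by $a^{\alpha}\ge 1$, the vector $\oline X+T(\oline X)\in\hf$ also blows up by the \emph{same} factor on every nonzero component $T_{\alpha,\beta}(\oline X)$ (because $-\alpha(Y)=\beta(Y)$ on $\af_H$ as derived in the text), so that $a\exp(\oline X)a^{-1}=\exp(\operatorname{Ad}(a)\oline X)$ differs from an element of $H$ by a factor whose size is controlled independently of $a$ — this is where the wavefront hypothesis \eqref{wf} is indispensable, and where a naive estimate fails for non-wavefront spaces. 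Once this uniform "$\oline\uf$ modulo $\hf$" bound is in place, reassembling the three factors and absorbing the fixed finite/compact data $F,K,\W$ into the neighborhoods finishes the proof; I would also remark (cf. Remark \ref{WFgpcase}) that for $Z=G$ viewed as a $(G\times G)$-space the statement specializes to the original Eskin--McMullen wavefront lemma.
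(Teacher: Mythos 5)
Your choice of $E=\Omega A^-\W$ and the treatment of property (1) match the paper, and you correctly sense that the whole difficulty sits in conjugating a small neighborhood by $a\in A^-$. But the argument you sketch for property (2) has a genuine gap, in two places. First, the reduction is off: the stabilizer of $a\cdot z_0$ is $aHa^{-1}$, not $H$, so the condition to verify is $au\cdot z_0\in{\mathcal V}a\cdot z_0$, i.e.\ $aua^{-1}\in{\mathcal V}\,aHa^{-1}$ --- not, as you write ``more precisely,'' that $aua^{-1}\cdot z_0$ stays in ${\mathcal V}\cdot z_0$. Second, and more seriously, the mechanism you propose for taming the expanding $\oline\uf$-directions does not work as stated: for $\oline X\in\gf^{-\alpha}$ the vector $\oline X+T(\oline X)\in\hf$ does \emph{not} scale by a single factor under $\Ad(a)$, since $\Ad(a)$ multiplies $\oline X$ by $a^{-\alpha}\ge 1$ and $T_{\alpha,\beta}(\oline X)$ by $a^{\beta}\le 1$; the identity $-\alpha(Y)=\beta(Y)$ holds only for $Y\in\af_H$, whereas here $\log a\in\af^-$. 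So $\Ad(a)(\oline X+T(\oline X))$ lands in $\Ad(a)\hf$, not in $\hf$, and the claimed uniform ``$\oline\uf$ modulo $H$'' bound is exactly the point you have not proved.

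The idea that closes the gap --- and which the paper uses --- is to never touch $\oline\uf$ at all. Since $P\sw\cdot z_0$ is open for each $\sw\in\W$, a neighborhood of $\sw\cdot z_0$ in $Z$ can be written as ${\mathcal U}_1\sw\cdot z_0$ with ${\mathcal U}_1\subset P$ a small neighborhood of $\1$ chosen so that $\Ad(a){\mathcal U}_1\subset{\mathcal U}_1$ for all $a\in A^-$ (possible because $A^-$ contracts $\nf$ and normalizes $MA$). Then for $z\in{\mathcal U}\cdot z_0$ with $\sw\cdot z=p\sw\cdot z_0$, $p\in{\mathcal U}_1$, one gets $a\sw\cdot z=(apa^{-1})a\sw\cdot z_0\in{\mathcal U}_1a\sw\cdot z_0$, uniformly in $a$. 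Note also that this argument for (2) uses only real sphericity; the wavefront hypothesis enters solely in (1), to guarantee that $A^-$ (rather than all of $A_Z^-$, which does not contract $P$) already covers $Z$ in the polar decomposition. Your proposal places the wavefront hypothesis inside step (2), which is a misattribution of where it is actually needed.
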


\begin{proof} Put  $$E=\Omega A^-\W\, .$$
Note that $E\subset G$ is closed as $A^-\subset G$ is closed and both $\Omega$ and $\W$ are compact subsets 
of $G$. Then (1) follows from the wavefront assumption, by (\ref{polar})
and (\ref{AZ}). 

Let  ${\mathcal V}$ be a neighborhood of $\1$ in $G$. By compactness 
there exists a neighborhood ${\mathcal V}_1\subset{\mathcal V}$
such that  $\Ad(x) \mathcal{V}_1\subset {\mathcal V}$ for all $x\in\Omega$.
Then it suffices to establish the implication in (2) for  $g\in A^ -\W$
and with ${\mathcal V}_1$ instead of ${\mathcal V}$.

Since conjugation by $A^-$ contracts $\nf$ there exists an open neighborhood
${\mathcal U}_1\subset \mathcal {V}_1\cap P$ of ${\bf 1}$ in $P$
such that $\Ad(a) \mathcal{U}_1\subset {\mathcal U}_1$ for all $a\in A^ -$.
It follows from the openness of $P \sw\cdot z_0$ for each $\sw$ that 
$$ \bigcap_{\sw\in\W}\, \sw^{-1} {\mathcal U}_1\sw \cdot z_0$$ 
is open. It hence contains
${\mathcal U}\cdot z_0$ for some neighborhood $\mathcal U$ of $\1$ in $G$.
With that we obtain for $z\in{\mathcal U}\cdot z_0$ 
that $\sw\cdot z\in {\mathcal U}_1\sw\cdot z_0$ for all $\sw\in\W$, and hence 
for $g=a\sw\in A^-\W$ 
that
$$g\cdot z \in 
 a  {\mathcal U}_1 \sw \cdot z_0 
\subset {\mathcal U}_1 a\sw \cdot z_0\subset {\mathcal V}_1g \cdot z_0\,.$$
as claimed.
\end{proof}

We refer to \cite{DRS} or \cite{EM} for  the notion of well-rounded balls.  

\begin{theorem}\label{MTC wf}  Let $Z=G/H$ be a wave-front real spherical space. Then MTC holds  for any family $\B$
of well-rounded balls.  If in addition $H$ is reductive, then MTC holds for the intrinsic balls. 
\end{theorem}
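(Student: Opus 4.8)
The statement is Theorem \ref{MTC wf}: for a wavefront real spherical space $Z=G/H$ with a lattice $\Gamma<G$ as in Conjecture \ref{p1}, MTC holds for any family of well-rounded balls, and for intrinsic balls when $H$ is reductive. The standard route, going back to \cite{DRS} and \cite{EM}, is to reduce the counting asymptotics to an equidistribution statement on $Y=G/\Gamma$ and then feed in the Wavefront Lemma (Lemma \ref{wfl}). Concretely, for a bounded measurable $\psi$ on $Y$ one forms the averaging operator and one knows from the mixing / equidistribution of the $G$-action on $L^2(Y)$ (Howe--Moore, or the relevant ergodic input; for $G$ semisimple without compact factors this is classical) that for $g\to\infty$ in an appropriate sense, $\int_Y \psi(g^{-1}x)\,\phi(x)\,dx \to \big(\int_Y\psi\big)\big(\int_Y\phi\big)$. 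The counting function $N_R(\Gamma/\Gamma_H,Z)$ is rewritten, via the fibration $H\to G\to Z$ and the measure compatibility \eqref{measure compatible}, as the integral over $Y$ of a suitable smoothing of the indicator $\1_{B_R^Z}$ pulled back to $G$ and then averaged over $H/\Gamma_H$; because $\vol(G/\Gamma)=\vol(H/\Gamma_H)=1$, the normalizations match and the main term comes out as $\vol_Z(B_R^Z)$.

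\textbf{Key steps in order.} First, I would set up the "thickening" argument: fix a small neighborhood $\mathcal V$ of $\1$ in $G$ and compare $N_R(\Gamma/\Gamma_H,Z)$ with the integral $I_R^\pm = \int_Y F_R^\pm(x)\,dx$ where $F_R^\pm$ is built from $\1_{B_{R}^Z}$ fattened/shrunk by $\mathcal V$ and then spread over the $\Gamma$-orbit, i.e. $F_R(g\Gamma)=\sum_{\gamma\in\Gamma/\Gamma_H}\chi_R(g\gamma\cdot z_0)$ for $\chi_R$ an $L^1$-normalized bump supported near $B_R^Z$. Second, invoke equidistribution on $Y$ to get $I_R^\pm \sim \vol_Z(\text{fattened }B_R^Z)$; this is where the asymptotic $N_R/|B_R^Z|\to 1$ will emerge provided the boundary contribution is negligible. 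Third — and this is the crucial geometric point — use the Wavefront Lemma to control the error introduced by the thickening: well-roundedness of $\B$ says precisely that $\vol_Z(\mathcal V\cdot B_R^Z)/\vol_Z(B_R^Z)\to 1$ as $\mathcal V\downarrow\{\1\}$ uniformly in $R$, and Lemma \ref{wfl} guarantees that left-translating $z_0$ by elements of the closed set $E$ (which surjects onto $Z$) moves points in a $G$-equicontinuous way, so that the fattened indicator on $Z$ lifts to a controlled fattening on $Y$. Fourth, for the intrinsic-ball statement when $H$ is reductive, I would verify that the intrinsic balls $B_R=\{[k,X]:\|X\|<R\}$ are well-rounded: using the polar decomposition \eqref{polar}, the volume estimate \eqref{vbound} $\vol_Z(\omega a\sw\cdot z_0)\asymp a^{-2\rho}$, and the wavefront identity $\af^-+\af_H=\af_Z^-$, one shows the ball volume grows without the pathological "thin shell" behavior, so the general case applies.

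\textbf{Main obstacle.} The delicate part is the uniformity in $R$ of the thickening estimate — i.e. showing that the boundary effect $\vol_Z(\mathcal V B_R^Z \setminus B_R^Z)$ is small relative to $\vol_Z(B_R^Z)$ uniformly in $R$, and translating this from $Z$ up to $Y$ correctly. This is exactly the role of well-roundedness together with the Wavefront Lemma: the lemma converts the $Z$-side fattening (which is easy, by well-roundedness) into a $G$-side equicontinuous fattening near $z_0$, and hence into a $Y$-side fattening $F_R^-\le F_R\le F_R^+$ with $\int_Y(F_R^+-F_R^-)=o(\vol_Z(B_R^Z))$, so that the equidistribution limit is not swamped by error. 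Everything else — the change of variables along $H\to G\to Z$, the bookkeeping of the three unit volumes, and the passage to $K$-invariant intrinsic balls via Mostow coordinates — is routine given the results already assembled in Sections \ref{Sect2}--\ref{wfrss}, in particular Lemma \ref{lemma int-ineq} and \eqref{vbound}.
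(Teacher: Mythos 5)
Your proposal is correct and takes essentially the same route as the paper: the paper's proof consists of citing Eskin--McMullen Theorem 1.2 (equidistribution, whose only $Z$-specific input is the Wavefront Lemma) and Theorem 1.4 (MTC from equidistribution for well-rounded balls), plus \cite{KSS} Section 2 for well-roundedness of intrinsic balls, and your thickening/equidistribution/wavefront sketch is an unpacking of exactly that argument.
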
 

\begin{proof} The first part follows from \cite{EM}. To be precise: Th. 1.2 (Equidistribution) in \cite{EM} only requires 
the wave-front lemma. MTC, that is \cite{EM} Th. 1.4, then follows from Th. 1.2 for any family of well-rounded 
balls.  The last statement follows from \cite{KSS}, Section 2,  where it was shown that the intrinsic balls are well rounded for $H$ reductive.
\end{proof} 

\begin{rmk}\label{WFgpcase}  The wavefront lemma holds for an arbitrary Lie group $G$ when considered 
as a homogeneous space  $Z=G \times G/ \Delta(G)\simeq G$ with isomorphism provided by 
the map 
$$p:Z \to G\, , \ \  (g_1, g_2) \Delta(G) \mapsto g_1g_{2}^{-1}\, .$$
We take $E=G \times \{1\} \subset G \times G.$
Now for $(g,\1) \in E$  and  ${\mathcal U}=U_1 \times U_2$, 
a neighborhood of $(\1,\1)$ in $G\times G$,   we have
$$ p((g,\1) {\mathcal U} \Delta(G)) =  g U_1 U_2^{-1}.$$  

On the other hand  for the given neighborhood  ${\mathcal V}=V_1 \times  V_2$ we have similarly
$$ p({\mathcal V} (g,\1) \Delta(G))=  V_1 g V_2^{-1}$$  and this contains $gV_2^{-1}.$
Thus we just have to require of ${\mathcal U}$ that $U_1 U_2^{-1} \subset V_2^{-1}.$
\end{rmk}

\section{Generalities on counting with error terms }\label{Sect4}

After the interlude on wavefront real spherical spaces, we pick up the discussion from Section \ref{Sect2} 
and continue 
with error terms for the main term count.  Assume that we have a quadruple $(X,\mu, D, (B_R)_{R>0})$ which satisfies 
main term counting MTC.  We then define the pointwise error term 

$$\err_{pt}(R, D):=|N_{R}(D, X) - |B_{R}||$$ 
and one might ask for the optimal $\alpha\leq 1$  such that 
$$\err_{pt}(R,D)\ll_\e |B_R|^{\alpha +\e}  \quad (\e >0)\, .$$ 

For the GCP one knows that  $\alpha\leq \frac {131}{416}=0,3149\ldots$ and some believe that $\alpha=\frac{1}{4}$, the 
lower threshold of Hardy and Landau, is possible. 

\par In this regard  we mention that $\alpha=\frac{1}{2}$ can be achieved for every family of balls which are given by 
$B_R= R \cdot B$ for some absolutely convex bounded set $B\subset \R^2$ with $C^2$-boundary, 
see \cite{Iv}, p.8.
Also  observe that the geometry of $B$ matters for the 
error count as $\alpha=\frac{1}{2}$ is obviously  optimal for the square $B=[-1,1]^2$.

\par Thus if it comes to error term bounds we should use balls which are as round as possible, 
for instance the intrinsic balls
which we just introduced. 

\par  In obtaining the reasonably good bound of $\alpha=\frac{1}{3}$ for the GCP elementary techniques 
from Fourier analysis suffice.  As an outsider one might ask at what threshold of $\alpha$  analysis converts to number theory.  
Before we come to the issue of $\alpha=\frac{1}{3}$  we pin down a more specific general setup. 

\par For the remainder we only consider 
quadruples $(G/H, \mu, \Gamma/ \Gamma_H, (B_R)_{R>0})$ which satisfy the MTC. 
Specifically  $Z=G/H$ is  a unimodular homogeneous space such that there is a lattice 
$\Gamma\subset G$ with $\Gamma_H=\Gamma\cap H$ a 
lattice in $H$. We assume that the Haar measures on $G$ and $H$ are normalized such  that
$G/\Gamma$ and $H/\Gamma_H$ both have volume 1.

\par With this set-up there is then a double fibration
$$ \xymatrix{ &G/\Gamma_H \ar[ld] \ar[rd] & \\ 
Z=G/H & & Y=G/\Gamma& }$$
By fibre-wise integration we obtain maps between functions on
$Z$ and $Y$. From $L^\infty (Y)$ to $L^\infty(Z)$ we thus have 
$\phi\mapsto \phi^H$ defined by
\begin{equation} \label{I1}
\phi^H(gH)
:=\int_{H/ \Gamma_H} \phi(gh\Gamma)\ d(h\Gamma_H)\qquad (\phi \in L^\infty(Y))\end{equation}
with
\begin{equation} \label{I1a} \|\phi^H\|_\infty \leq \|\phi\|_\infty
\, .\end{equation}
In the opposite direction, from $L^1(Z)$ to $L^1(Y)$ we have
$f\mapsto f^\Gamma$ defined by 
\begin{equation}\label{I2}  
f^\Gamma(g\Gamma):=\sum_{\gamma\in
\Gamma/ \Gamma_H} f(g\gamma H)\qquad (f \in L^1(Z))\, , \end{equation}
and likewise contractive
\begin{equation} \label{I2a} \|f^\Gamma\|_1 \leq \|f\|_1
\, .\end{equation}
{}From Fubini's theorem we obtain the following adjointness relation:
\begin{equation} \label{ufo} \la f^\Gamma, \phi \ra_{L^2(Y)}=
 \la f, \phi^H\ra_{L^2(Z)}
\end{equation}
for all $\phi\in L^\infty (Y)$ and $f\in L^1(Z)$.
In particular, applying (\ref{ufo}) to $|f|$ and $\phi=\1_{Y}$ implies
(\ref{I2a}).

We write $\1_R \in L^1(Z)$ for the characteristic function of
$B_R$ and deduce
\begin{itemize}
\item $\1_R^\Gamma(e\Gamma)=N_R(\Gamma, Z):=
\#\{ \gamma\in \Gamma/\Gamma_H\mid \gamma\cdot z_0\in B_R\}$.
\item $\|\1_R^\Gamma\|_{L^1(G/\Gamma)}= |B_R|$.
\end{itemize}
where the second equality follows from
(\ref{ufo}) with $\phi=\1_{Y}$.

\begin{ex} Once again we return to the GCP with $Z=G=\R^2$ and 
$\Gamma=\Z^2$. Then 
$$ N_R(\Z^2, \R^2) = \sum_{\gamma\in \Z^2} \1_R(\gamma)\, .$$
Let us recall how to obtain the bound 
$\alpha={1\over 3}$  by means of harmonic analysis
on $Y=\R^2/\Z^2$, more precisely
the Poisson summation formula.
Informally we would like to apply this and deduce 
\begin{equation} \label{Poisson} 
N_R(\Z^2, \R^2) = \sum_{\gamma\in\Gamma^\wedge}  \widehat{\1_R}(\gamma)\end{equation}
where $\widehat{\1_R}$ is the Fourier-transform of the the characteristic function $\1_R$ and $\Gamma^\wedge$ is 
the dual lattice. 
However, as the cutoff $\1_R$ is not smooth,  the sum in (\ref{Poisson}) is not absolutely convergent. 
The remedy is to smoothen $\1_R$ by 
convolution with some radial $\varphi\in C_c^\infty(G)$ of integral $1$, i.e.  set $\1_{R,\varphi}:= \varphi * \1_R$ and 
consider 
\begin{equation} \label{molly sum}
\sum_{\gamma\in\Z^2} \1_{R,\varphi} (\gamma) = 
\sum_{\gamma\in\Gamma^\wedge}  \widehat{\1_{R,\varphi}}(\gamma) = \widehat{ \1_{R,\varphi}} (0) + 
\sum_{\gamma\in\Gamma^\wedge\atop \gamma\neq 0}  \widehat{\1_{R,\varphi}}(\gamma) \, .
\end{equation}
Now observe that 
$$\widehat{ \1_{R,\varphi}} (0) = \int_{\R^2} \1_{R,\varphi} (x,y) \ dx\wedge dy=|B_R|$$
and  $ \sum_{\gamma\in\Z^2\atop \gamma\neq 0}  \widehat{\1_{R,\varphi}}(\gamma) $ converges. By using for $\varphi$
an approximation of the identity, one derives the estimate
as follows.

Since the theory applies equally well in $n$ dimensions we will work in this generality,
and fix a non-negative function $\varphi\in C^\infty(\R^n)$, supported in $B_1$ and with
integral $1$. 
Let $\varphi_\epsilon(x)=\epsilon^{-n}\varphi(\epsilon^{-1}x)$, and observe
that $$\1_{R-\epsilon}*\varphi_\epsilon\leq \1_R \leq \1_{R+\epsilon}*\varphi_\epsilon.$$
Hence if we denote $N_{R,\epsilon}=\sum_{\gamma\in \Gamma} \1_R*\varphi_\epsilon(\gamma)\, $
we find
\begin{equation} \label{molly ineq} 
N_{R-\epsilon,\epsilon}\leq N_R \leq N_{R+\epsilon,\epsilon}.
\end{equation}
By explicit calculation (see \cite{SW}, Ch.~IV.3) one finds 
$$\widehat{\1_R}(\lambda)=\int_{\|x\|\leq R} e^{-i\lambda\cdot x} \,dx
=C_1 R^n\int_{-1}^1 (1-t^2)^{\frac{n-1}2} e^{-i |\lambda|Rt}\, dt 
=C_2 \left(\frac R{|\lambda|}\right)^{\frac n2} J_{\frac n2}(R|\lambda|)$$
where $J_{\frac n2}$ is a Bessel function and  $C_1,C_2>0$ some constants, 
which depend only on $n$. 
It is well-known (\cite{SW}, Lemma~IV.3.11) that the Bessel functions
$J_m(x)$ for all $m\ge 0$ 
behave like $x^{-\frac12}$ as
$x\to\infty$. Hence 
$$\widehat{\1_R}(\lambda)=O(R^{\frac{n-1}2}|\lambda|^{-\frac{n+1}2})$$
as $R|\lambda|\to\infty$. 

Since $\1_{R}*\varphi_\epsilon\in C^\infty_c(\R^n)$ we can apply Poisson summation.
By using $\widehat{\1_{R}*\varphi_\epsilon}=\widehat{\1_{R}}\widehat{\varphi_\epsilon}$ we obtain
from (\ref{molly sum})
\begin{equation}\label{NR-sum}N_{R,\epsilon}=\sum _{\gamma\in\Gamma^{\wedge}}  
\widehat{\1_R}(\gamma)\widehat{\varphi}(\epsilon\gamma)=
|B_R|+\sum _{\gamma\in\Gamma^{\wedge},\gamma\neq0}  
\widehat{\1_R}(\gamma)\widehat{\varphi}(\epsilon\gamma),
\end{equation}
where $\Gamma^{\wedge}$ denotes the dual lattice.
Next we claim that there exists a constant $C>0$ such that
\begin{equation}  \label{Gamma-sum}\sum _{\gamma\in\Gamma^{\wedge},\gamma\neq 0} |\gamma|^{-\frac{n+1}2} |\widehat{\varphi}(\epsilon\gamma)|
\leq C\epsilon^{\frac{1-n}2}\end{equation}
holds for all $\epsilon>0$.  In fact,   by the rapid decay of $\widehat{\varphi}$ we have 
$|\widehat{\varphi}(\e x)|\leq C  (1 + |\e x|)^{-n -1}$  for all $x \in \R^n$ and a fixed constant 
$C>0$.  Hence, by the integral criterion for sums of decaying functions we obtain 

$$\sum _{\gamma\in\Gamma^{\wedge},\gamma\neq 0} |\gamma|^{-\frac{n+1}2} |\widehat{\varphi}(\epsilon\gamma)|
\leq C \underbrace{\int_{|x|>c}   |x|^{-\frac{n+1}2} (1+|\e x|)^{-n -1} \ dx}_{=: I_\e} $$
 for a constant $c=c(\Gamma^{\wedge})>0$. 
 Finally 
 
$$ I_\e = \e^{\frac {1-n}2}\int_{|x|> c\e}  |x|^{-\frac{n+1}2} (1+|x|)^{-n -1} \  dx $$
and the claim follows from $n\geq 2$.
Combining (\ref{NR-sum})  with (\ref{Gamma-sum})  yields

\begin{equation} \label{eucl err1} \big|N_{R,\epsilon}-|B_R|\big|\leq C R^{\frac{n-1}2}\epsilon^{\frac{1-n}2}. \end{equation}
Now $\big||B_{R\pm\epsilon}|-|B_R|\big| \sim R^{n-1}\epsilon$, and
by comparing with (\ref{molly ineq}) we then conclude that
\begin{equation} \label{eucl err2} \big|N_{R}-|B_R|\big| \leq C\big( (R+\epsilon)^{\frac{n-1}2}\epsilon^{\frac{1-n}2}+R^{n-1}\epsilon \big)\end{equation} 
for some $C>0$ and all $\epsilon>0$. 
The best possible value of $\epsilon$ is $R^{\frac{1-n}{n+1}}$ and yields
\begin{equation}\label{eucl err3} \big|N_{R}-|B_R|\big|\leq C R^{\frac{n(n-1)}{n+1}}. \end{equation} 

For $n=2$ this gives the mentioned bound by $O(R^{\frac23})$. For $n=3$ it gives
$O(R^{\frac32})$, compared to the best known upper bound $O(R^{\frac43}\log R)$.
For $n\ge 4$ the error is known to behave essentially like $R^{n-2}$
(see \cite{Fricker}).
\end{ex}

\par This method resembles to some extend the approach in \cite{KSS} where error term bounds were obtained 
for wavefront spaces: Poisson summation was replaced by spectral analysis and Weyl's law.  

\begin{ex}
As a second instance of a classical problem of counting lattice points we review the case of the hyperbolic 
space. This study was initiated by Delsarte, see \cite{Del}.
Let the upper half plane
$\mathbf{H}=\{z=x+iy \in \mathbb{C}: y>0\}$ be  equipped with the invariant metric
$ds^2=\frac{dx^2+dy^2}{y^2}$ by which it has  constant negative curvature. 
The Riemanian metric induces a distance function $d$ on $\mathbf{H}$ which we use to define a family 
of balls $$B(R)=\{z \in \bH: d(z,i)<R\}\qquad  (R>0).$$
Their volume is exponentially growing with the radius:   
$$\vol(B(R)):=\int_{B(R)} \frac{dx  dy}{y^2}=4 \pi \sinh^2(R/2)\sim \pi e^R\, .$$

To set up the counting problem, recall that the hyperbolic plane admits a group of symmetries 
$G:=\Iso(\mathbf{H})=\PSL_{2}(\mathbb{R})$ and the role of the lattice is played by a discrete subgroup 
$\Gamma < \PSL_{2}(\mathbb{R})$ of finite co-volume.
More precisely,  we consider the set $D_{\Gamma}(z)=\Gamma \cdot z,$ with $z \in \mathbf{H}.$
The hyperbolic lattice counting problem then consists of estimating 
$$N_{R}(z):=\# D_{\Gamma}(z) \cap B(R)=\#\{\gamma \in \Gamma: d(i,\gamma \cdot z)<R\}.$$
as $R\to\infty$.

An interesting feature of this problem is that, unlike the Euclidean lattice point counting, a direct 
packing argument is not possible since most of the volume of a hyperbolic ball is located near its boundary. 
This so-called mass concentration phenomenon is a reflection of the exponential volume growth of the Haar 
measure on the semi-simple Lie group $G$. 
Thus, different techniques are required to 
approach this lattice counting problem and even obtaining the main term
was a non-trivial achievement.
In \cite{Del} the problem is studied for co-compact lattices and 
MTC is proved, that is, with the measure normalized as above, 
$$N_{R}(z) \sim \frac{1}{\vol(\Gamma \backslash \mathbf{H})}\,\pi e^R$$
for all $z$, as $R\to\infty$.
In \cite{Sel} Selberg developed the trace formula as a tool of
obtaining geometric information from spectral information and vice-versa. 
\par
In case $\Gamma=\PSL(2,\Z)$, Selberg showed  \cite{Sel} 
that 
$$N_{R}(i) \sim \frac{3}{\pi} \vol(B(R))$$ 
and obtained an error term estimate, 
$$|N_{R}(i)- \frac{3}{\pi} \vol(B(R))|=O(\exp(2R/3)).$$ 
Maybe the most significant result is the connection between the 
lattice counting problem for a general lattice of finite co-volume $\Gamma \subset \PSl(2,\Z)$ 
and the so-called automorphic spectrum. More precsiely, Selberg provided the following
asympotic formula expressing the number of lattice points as a sum over
the eigenvalues of the Laplace operator on the Riemann surface $\Gamma \backslash \mathbf{H}=\Gamma \backslash G/K.$ 
To formulate the exact formula we denote by $\Delta$ the Laplace-Beltrami operator on the Riemannian manifold 
$\Gamma \backslash \mathbf{H}.$ This is sometimes called the hyperbolic Laplacian. In case 
$\Gamma \backslash \mathbf{H}$ is compact the spectrum of $\Delta$ is discrete. In the non-compact case, 
in addition to the discrete spectrum there is a contribution from the continuous spectum. This 
spectrum consists of Eisenstein series which are parametrized by the interval 
$[\frac{1}{2},\infty)$ and by cusps. 
In both cases, we denote by $\{u_{j}\}_{j \geq 0}$ a complete system of orthonormal eigenfunctions for the 
discrete spectrum of the hyperbolic Laplacian corresponding to 
$0=\lambda_{0} < \lambda_{1} \leq \lambda_{2} \leq \cdots .$ 
In particular $u_{0}=\vol(\Gamma \backslash \mathbf{H})^{-\frac12}$.

Fix $z,w \in \mathbf{H}$ and let 
$$N(R,z,w)=\#\{ \gamma \in \Gamma: 2 \cosh(d(\gamma z, w))<R\},$$ 
thus $N_{R}(z)=N(R',z,i)$ with $R'=2 \cosh(R) \sim e^{R}$. By using the spectral expansion of the automorphic kernel (see Theorem 7.1 \cite{Iwa}), Selberg showed in \cite{Selb} and 
\cite{SelCol} (see also Theorem 2 of \cite{Pa} and Theorem 12.1 in \cite{Iwa}) the following formula:
$$N(R,z,w)= \sqrt{\pi} \sum_{\frac{1}{2}  <  s_{j} \leq 1} \frac{\Gamma(s_j-\frac{1}{2})}{\Gamma(s_j+1)} u_{j}(z)\overline{u_{j}(w)}R^{s_j} + O(R^{\frac{2}{3}})$$
The relation between $s_{j}$ and the eigenvalue $\lambda_{j}$ is given by the usual translation $\lambda_{j}:=s_{j}(1-s_{j}).$
Notice that in the sum, the value $s_{0}=1$ corresponds to the constant function yielding the main term 
$\frac{\pi}{\vol(\Gamma \backslash \mathbf{H})}R.$ The rest of the sum is over exceptional eigenvalues, 
that is those eigenvalues 
of the Laplacian that satisfy $\lambda_{j}:=s_{j}(1-s_{j})<\frac{1}{4}.$

The moral of the last example is that even if one invokes the full force of the spectral decomposition 
of $L^{2}(\Gamma \backslash \mathbf{H})$ and the trace formula, understanding the error term in the 
lattice counting problem requires refined information about the automorphic spectrum. For example, 
when considering congruence lattices $\Gamma(N)$, the Selberg-$\frac{1}{4}$-conjecture, that there are 
no exceptional eigenvalues, that is $\lambda_{1}(\Gamma(N)\backslash \mathbf{H}) \geq \frac{1}{4}$, 
is relevant to determining the main term in the corresponding lattice point counting formula.
\end{ex}
\bigskip 
\par For the rest of this article we assume now that $Z=G/H$ is a wavefront real spherical space, $H$ is reductive 
and $\B$ is constituted by the intrinsic balls. Further we restrict ourselves to the cases where the cycle $H/\Gamma_H$
is compact.\footnote{After a theory for regularization of $H$-periods of Eisenstein series is developed, 
 it is expected that one can drop this assumption.} 
To simplify the exposition here we assume in addition that $\Gamma<G$ is
irreducible, i.e. there do not exist non-trivial  normal subgroups $G_1, G_2$  of $G$ and 
lattices $\Gamma_i < G_i$ such that $\Gamma_1 \Gamma_2$ has finite index in $\Gamma$. 

\par The error we study is measure theoretic in nature,
and will be denoted here as $\err(R,\Gamma)$.
Thus, $\err(R,\Gamma)$ measures the deviation of two measures on
$Y=\Gamma \backslash G$,
the counting measure arising from lattice points in a ball of radius $R$,
and the invariant measure $d \mu_{Y}$ on $Y$. More precisely, 
with $\1_R$ denoting the characteristic function of $B_R$ we
consider the densities $$F^{\Gamma}_{R}(g \Gamma):=
\frac{\sum_{\gamma\in \Gamma/ \Gamma_H} \1_R(g\gamma H)}{|B_{R}|}.$$   
Then, 
$$\err(R,\Gamma)=||F^{\Gamma}_{R}-d \mu_{Y}||_{1},$$ 
where  $||\,\cdot\,||_{1}$ denotes the total variation of the signed measure. Notice that 
$|F^{\Gamma}_{R}(e\Gamma)-1|=\frac{|N_{R}(\Gamma, Z) - |B_{R}||}{|B_{R}|}$ is essentially 
the error term for the point wise count. 

Our results on the error term $\err(R,\Gamma)$ allows us to deduce results toward the error 
term in the smooth counting problem, a classical problem that studies the quantity 
 $$\err_{pt,\alpha}(R,\Gamma)=|B_R||F^{\Gamma}_{\alpha,R}(e\Gamma)-1|$$ where 
 $\alpha \in C_{c}^{\infty}(G)$ is a positive smooth function of compact support (with integral one) 
 and $F^{\Gamma}_{\alpha,R}=\alpha*F^{\Gamma}_{R}.$
We refer to \cite{KSS}, Remark 5.2 for the comparison of $\err(R,\Gamma)$ with 
$\err_{pt,\alpha}(R,\Gamma)$.

\par Below we will introduce an exponent $p_H(\Gamma)$ (see (\ref{ic})), which measures the worst $L^{p}$-behavior
of any generalized matrix coefficient associated with a spherical
unitary representation $\pi$, which is  $H$-distinguished and occurs
in the automorphic spectrum of $L^2(\Gamma \backslash G)$. 
In \cite{KSS} we obtained
the following error bound for triple spherical spaces.

\begin{theorem}\label{thmB} 
Let $Z=G_0^3/ \diag (G_0)$ for $G_0=\SO_e(1,n)$ 
and assume that $H/\Gamma_H$ is compact.
For all $p>p_H(\Gamma)$ there exists a $C=C(p)>0$ such that
$$\err(R,\Gamma)\leq C |B_R|^{-{1\over (6n+3)p}}\, $$
for all $R\geq 1$.
\end{theorem}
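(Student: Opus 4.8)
The plan is to deduce Theorem \ref{thmB} from the abstract machinery developed in \cite{KSS}, which reduces the error term $\err(R,\Gamma)$ for the intrinsic counting problem to two inputs: a spectral gap statement (via Weyl's law / the automorphic spectral decomposition of $L^2(\Gamma\bs G)$ restricted to the relevant $H$-distinguished part) and, crucially, the $L^p$-versus-$L^\infty$ bound (\ref{expect}) for $L^{p'}$-eigenfunctions on $Z$. In our setting $Z=G_0^3/\diag(G_0)$ with $G_0=\SO_e(1,n)$ is a wavefront real spherical space with $H=\diag(G_0)$ reductive, so Proposition \ref{propI} applies and Property (I) holds; moreover all the structural results of Sections \ref{wfrss}–\ref{Sect4} (polar decomposition, integral inequalities of Lemma \ref{lemma int-ineq}, the wavefront lemma, and MTC for intrinsic balls via Theorem \ref{MTC wf}) are available. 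Thus the first step is simply to record that the triple space satisfies all the hypotheses under which the general error-term estimate of \cite{KSS} is valid, and to invoke that estimate in the form
$$\err(R,\Gamma)\ \leq\ C\,|B_R|^{-\delta}$$
for some explicit exponent $\delta>0$ depending on $p_H(\Gamma)$, the rank, and $\dim Z$.

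The second step is to make $\delta$ explicit. The exponent coming out of the \cite{KSS} argument is built from three pieces of data attached to $Z=G_0^3/\diag(G_0)$: the real rank $r=\rank_\R Z$, the dimension $d=\dim Z$, and the $L^p$-exponent $p_H(\Gamma)$ governing the automorphic matrix coefficients. For $G_0=\SO_e(1,n)$ one computes $\dim G_0 = \binom{n+1}{2}+n$; one needs the dimension of $Z$, the structure of $\af_Z$ (the triple space has $\rank_\R Z$ equal to $2$, coming from the two "independent" copies after quotienting by the diagonal), and the relevant $\rho$-shift. Plugging these into the general formula and the optimization over the smoothing parameter $\epsilon$ (exactly as in the Gauss-circle example, where $\epsilon$ is chosen to balance the spectral error against the boundary-of-ball error) produces, after the dust settles, the clean exponent $\frac{1}{(6n+3)p}$ for any $p>p_H(\Gamma)$. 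So this step is a bookkeeping computation: track the dimension constants of $\SO_e(1,n)^3/\diag$ through the master inequality of \cite{KSS} and verify the arithmetic gives $6n+3$ in the denominator.

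The third step — and the one which genuinely requires the present paper rather than just citing \cite{KSS} — is to verify the hypothesis of the master theorem that the relevant eigenfunctions satisfy the $L^p$-bound (\ref{expect}). In \cite{KSS} this was taken as an assumption (or checked by hand for the triple spaces); here it follows from the main result of the present paper, namely the verification of (\ref{expect}) for all wavefront real spherical spaces (Section \ref{Sect7}), via the generalized constant-term approximation of \cite{DKS}. Concretely, an $H$-distinguished spherical automorphic representation $\pi$ occurring in $L^2(\Gamma\bs G)$ contributes to $F_R^\Gamma$ through matrix coefficients $m_{v,\eta}$; compactness of $H/\Gamma_H$ lets one regularize the $H$-period and realize $\eta$ as a genuine $H$-invariant functional, and the bound (\ref{expect}) then controls $\|m_{v,\eta}\|_p$ by a Sobolev norm in terms of $\|m_{v,\eta}\|_\infty$, which by Proposition \ref{propI}(\ref{one}) is finite. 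Feeding this uniform control into the spectral sum, together with Weyl's law for the density of the automorphic spectrum, yields the geometric (as opposed to merely sub-polynomial) decay rate.

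The main obstacle I anticipate is not any single estimate but the packaging: one must be careful that the exponent $p_H(\Gamma)$ used here (defined via the automorphic spectrum in (\ref{ic}) below) is exactly the quantity for which (\ref{expect}) supplies the needed bound, and that the smoothing/de-smoothing passage between $\err(R,\Gamma)$ and the spectral side does not lose more than a constant — in particular that the intrinsic balls $B_R$ on $Z$ are well-rounded (Theorem \ref{MTC wf}, using $H$ reductive) so that the $\epsilon$-thickening errors are of the expected order $|B_R|^{1-c}$. Once those compatibilities are in place, combining them with the explicit dimension count of Step two finishes the proof. Since all of this is carried out in detail in \cite{KSS} for precisely these triple spaces, the proof here amounts to citing that work and substituting the now-available input (\ref{expect}); we therefore only sketch it and refer the reader to \cite{KSS} for the remaining details.
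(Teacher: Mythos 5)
Your overall strategy coincides with the paper's: Theorem \ref{thmB} is not proved here at all but quoted from \cite{KSS}, where it is deduced from the master estimate (Theorem \ref{thmC}) once Hypothesis A is verified by hand for the triple spaces; your Steps 1--2 (check the standing hypotheses for $Z=G_0^3/\diag(G_0)$, then do the dimension bookkeeping $\dim(G/K)=3n$, $k=3n+1$, $2k+1=6n+3$) are exactly that route. Your Step 3, however, is a misidentification of the needed input. Theorem \ref{thmB} does \emph{not} rest on the bound (\ref{expect}) (i.e.\ Hypothesis B) established in Section \ref{Sect7} of the present paper; it rests on Hypothesis A, which has no Sobolev loss in (\ref{hypo1}) and which was checked directly for the triple spaces in \cite{KSS}. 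If you actually substituted Hypothesis B as you propose, you would land in Theorem \ref{thm error bound} instead, where the Sobolev shifts $l$ and $d$ enter and the exponent becomes $-{1\over (2s'+1)p}$ with $s'=\dim G/K+l+2d>3n+1$ --- strictly weaker than the claimed $-{1\over(6n+3)p}$. So to get the stated exponent you must cite the Hypothesis A verification from \cite{KSS}, not the new $L^p$--$L^\infty$ bound of this paper; with that correction the argument is the paper's.
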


\section{Counting with error terms on a wavefront real spherical space}\label{Sect5}  
We assume from now on that $G$ is semisimple with no compact factors.

First we need some notation.  We denote by $\hat G$ the unitary dual of $G$, i.e. the set of equivalence classes 
of irreducible unitary representations of $G$.   We recall the maximal compact subgroup $K<G$. By 
$\hat G_s\subset \hat G$ we understand the subset which corresponds to $K$-spherical representations, i.e. 
representations which have a non-zero $K$-fixed vector.  

With that 
we define in continuation of Definition \ref{defi p_H}
\begin{equation} \label{ic} p_H(\Gamma):= \sup\{ p_H(\pi) \mid \pi \in \hat G_s\cap  
\supp L^2(G/\Gamma)\} \end{equation}
and recall that $p_H(\Gamma)<\infty$ by \cite{KSS}, Lemma 6.3. 

A subset $\Lambda\subset \hat G_s$  is called {\it $L^p$-bounded}  provided that 
$p_H(\pi) < p$ for all $\pi \in \Lambda$.   Set $\Lambda_p:=\{ \pi \in \hat G_s\mid   p_H(\pi)<p\}$.
We note that 
$$\Lambda_p=\{ \pi\in\hat G_s\mid(\forall \eta\in (\Hc_\pi^{-\infty})^H ) (\forall v\in \Hc_\pi^{K-{\rm fin}})  
\ m_{v,\eta}\in L^p(Z_\eta)\}$$
and 
$$\Lambda_p \subset \Lambda_{p'} \qquad  (p<p')\, .$$

In \cite{KSS} we formulated the following hypothesis.

\bigskip 
\par\noindent{\bf Hypothesis A:} {\it  For $1\leq p<\infty$
there exists a compact subset $D\subset G$ and constants $c, C>0$ 
such that the following assertions hold
for all $\pi\in \Lambda_p$, $\eta\in (\Hc_\pi^{-\infty})^H$ and
$v\in \Hc_\pi^K$:
\begin{equation}\tag{A1} \label{hypo1}  \|m_{v, \eta}\|_{L^p(Z_\eta)}  \leq C  
\|m_{v, \eta}\|_\infty\, ,\end{equation}
\begin{equation}\tag{A2} \label{hypo2} \|m_{v, \eta}\|_\infty  \leq c \| m_{v, \eta}\|_{\infty, D}
\,  . \end{equation}
Here $ \| \,\cdot\, \|_{\infty, D}$ denotes the supremum norm taken on the subset $D.$ }

\medskip
Under this hypothesis we have then shown in \cite{KSS} Thm.~7.3 that:  

\begin{theorem}\label{thmC} 
Let $Z$ be wavefront real spherical space 
for which Hypothesis A  is valid.
Assume also
\begin{itemize}
\item $G$ is  semisimple with no compact factors
\item $H$ is reductive and the balls $B_R$ are intrinsic
\item $\Gamma$ is an arithmetic and irreducible lattice
\item $\Gamma_{H}=H \cap \Gamma$ is co-compact in $H$
\item $p>p_H(\Gamma)$
\item $k>\frac12(\operatorname{rank}(G/K)+1)\dim(G/K) +1$
\end{itemize}
Then there exists a constant $C=C(p, k)>0$ such that
\begin{equation}\label{error bound}
\err(R,\Gamma)\leq C |B_R|^{-{1\over (2k+1)p}}\, 
\end{equation}
for all $R\geq 1$.
Moreover, if $Y=\Gamma \backslash G$ is compact one can replace the last  condition by $k>\dim(G/K)+1$.
\end{theorem}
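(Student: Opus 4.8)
The plan is to run the spectral method of Duke--Rudnick--Sarnak (\cite{DRS}), adapted to the real spherical setting, with the $L^p$-versus-$L^\infty$ bounds of Hypothesis A playing the role of the classical decay estimates. Since $Y=\Gamma\backslash G$ has volume one, Cauchy--Schwarz gives $\err(R,\Gamma)=\|F_R^\Gamma-d\mu_Y\|_1\le\|F_R^\Gamma-d\mu_Y\|_{L^2(Y)}$, so it suffices to control an $L^2$-norm on $Y$. As $\1_R$ is not smooth one first regularizes: fix a non-negative $\varphi\in C_c^\infty(G)$ of integral one supported near $\1$, let $\varphi_\epsilon$ be its $L^1$-normalized rescaling at scale $\epsilon$, and set $\1_{R,\epsilon}=\varphi_\epsilon*\1_R$ and $F^\Gamma_{R,\epsilon}=|B_R|^{-1}\1_{R,\epsilon}^\Gamma$. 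Contractivity of $f\mapsto f^\Gamma$ on $L^1$, together with the well-roundedness of the intrinsic balls (\cite{KSS}, Section 2) and the volume estimate (\ref{vbound}) --- which forces $|B_{R\pm\epsilon}|\asymp|B_R|$ and $\big|\,|B_{R\pm\epsilon}|-|B_R|\,\big|\ll\epsilon|B_R|$ for $R\ge1$ and bounded $\epsilon$ --- yields $\|F_R^\Gamma-F_{R,\epsilon}^\Gamma\|_{L^1(Y)}\ll\epsilon$. After a sandwiching argument between radii $R-\epsilon$ and $R+\epsilon$ it remains to estimate $\|F_{R,\epsilon}^\Gamma-d\mu_Y\|_{L^2(Y)}$ and to optimize $\epsilon$ at the end.

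Next one expands the smooth function $\1_{R,\epsilon}^\Gamma$ along the spectral decomposition of $L^2(\Gamma\backslash G)$. Because $\1_R$ is $K$-invariant, only spherical $\pi\in\hat G_s\cap\supp L^2(\Gamma\backslash G)$ contribute, and the trivial representation contributes exactly the constant $d\mu_Y$ by the normalization $\|\1_R^\Gamma\|_1=|B_R|$ recorded in Section \ref{Sect4}. For a non-trivial such $\pi$, the adjointness $\langle f^\Gamma,\phi\rangle_{L^2(Y)}=\langle f,\phi^H\rangle_{L^2(Z)}$ together with the co-compactness of $\Gamma_H$ in $H$ identifies the $H$-period of a $\pi$-automorphic vector with a generalized matrix coefficient $m_{v,\eta}$, $\eta\in(\Hc_\pi^{-\infty})^H$ and $v$ $K$-spherical, so the $\pi$-component of $\1_{R,\epsilon}^\Gamma$ is controlled --- up to the factor contributed by $\pi(\varphi_\epsilon)$ --- by $\big|\langle\1_R,m_{v,\eta}\rangle_Z\big|$. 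Pushing $\1_R$ forward to $Z_\eta$ and using that $H_\eta/H$ is compact (for $\gf$ simple this follows from Lemma \ref{lemma compact quotient} applied to the unimodular space $Z_\eta$ of Proposition \ref{propI}), the pushforward has $L^{p'}(Z_\eta)$-norm $\ll|B_R|^{1/p'}$, so H\"older with exponent $p$ shows that the $\pi$-component has $L^2(Y)$-norm $\ll|B_R|^{-1/p}\,\|m_{v,\eta}\|_{L^p(Z_\eta)}$ times the norm of $\pi(\varphi_\epsilon)$ on the relevant $K$-type; the $L^p$-norm is finite since $p>p_H(\Gamma)\ge p_H(\pi)$ (Definition \ref{defi p_H}, Proposition \ref{propI}).

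It then remains to sum the squares of these bounds over the automorphic spectrum, and this is where Hypothesis A is used. First, (\ref{hypo1}) replaces $\|m_{v,\eta}\|_{L^p(Z_\eta)}$ by $\|m_{v,\eta}\|_\infty$ and (\ref{hypo2}) replaces that by $\sup_D|m_{v,\eta}|$, i.e. by the restriction to the fixed compact set $D$ of the spherical $\pi$-automorphic form; by a fixed-order Sobolev embedding on $Y$ this is bounded by a polynomial in the Casimir eigenvalue $\lambda_\pi$ whose degree is governed by $\dim(G/K)$ and $\operatorname{rank}(G/K)$ --- and in the non-compact case by the growth of Eisenstein series, which is why one restricts to $H/\Gamma_H$ compact and why the two thresholds on $k$ differ. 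Second, the $\pi(\varphi_\epsilon)$-factor is $\ll\lambda_\pi^{-k}\|\mathcal C^k\varphi_\epsilon\|_{L^1}\ll\lambda_\pi^{-k}\epsilon^{-2k}$, where $\mathcal C$ denotes the Casimir operator. Feeding these into Weyl's law for the spherical automorphic spectrum of $\Gamma\backslash G/K$, the spectral sum converges precisely under the stated lower bound on $k$ and is $\ll|B_R|^{-1/p}\epsilon^{-2k}$. Hence $\err(R,\Gamma)\ll\epsilon+|B_R|^{-1/p}\epsilon^{-2k}$, and choosing $\epsilon=|B_R|^{-1/((2k+1)p)}$ balances the two terms and produces (\ref{error bound}); when $Y$ is compact there is no Eisenstein contribution, the pointwise bounds are milder, and the threshold drops to $k>\dim(G/K)+1$.

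I expect the main obstacle to be this last step: converting the clean per-representation estimate into a convergent spectral sum with \emph{explicit} dependence on $\epsilon$. This requires a uniform count of the spherical automorphic spectrum (a Weyl law / trace formula input), uniform pointwise bounds for $K$-invariant automorphic forms --- with a careful treatment of the continuous spectrum and of Eisenstein series in the finite-volume non-compact case --- and, above all, the passage from the global norms $\|m_{v,\eta}\|_{L^p(Z_\eta)}$ to the local quantities $\sup_D|m_{v,\eta}|$, which is exactly the content of Hypothesis A. For wavefront $Z$ that content is not free: it rests on a constant-term approximation of generalized matrix coefficients whose proof occupies the remainder of the paper (see Section \ref{Sect7}). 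By contrast, the regularization step and the per-$\pi$ H\"older bound are routine once the well-roundedness of intrinsic balls, Property (I), and the compactness of $H_\eta/H$ are in hand.
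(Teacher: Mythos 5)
Your proposal is correct and follows essentially the same route as the proof the paper relies on (Theorem 7.3 of \cite{KSS}): spectral decomposition of $L^2(Y)$, the adjointness $\langle f^\Gamma,\phi\rangle_Y=\langle f,\phi^H\rangle_Z$ converting $H$-periods into generalized matrix coefficients, Hypothesis A combined with Sobolev bounds on $Y$ and Weyl's law (Ji's multiplicity bound in the non-compact arithmetic case) to control the spectral sum, and mollification at scale $\epsilon$ optimized to $|B_R|^{-1/((2k+1)p)}$. The only organizational difference is that the paper packages the spectral work into the dual estimate $\|\phi^H\|_{L^p(Z)}\le C\|\phi\|_{\infty,k}$ for test functions $\phi\in C^{\infty}_b(Y)^K_{\rm van}$ (Prop.~6.5 of \cite{KSS}, whose Hypothesis B analogue is Proposition \ref{mpro} here) and then applies H\"older, whereas you expand $\1_R^\Gamma$ directly; the two are transposes of one another.
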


In \cite{KSS} the Hypothesis A was verified for the triple spaces,
and thus we could derive Theorem \ref{thmB} from Theorem \ref{thmC}.

When writing \cite{KSS} the harmonic analysis on real spherical spaces was not sufficiently developed to 
obtain Hypothesis A  (or suitable variants thereof) in general.   In particular 
at that time we were not able to derive an error bound 
for all wavefront real spherical spaces. For symmetric spaces
the existence of a non-quantitative error
term was established in
\cite{BeOh} and improved in \cite{GN}. 
\par In case of the hyperbolic plane our error term (\ref{error bound}) 
is still
far from the quality of the bound of A. Selberg. This is because the approach in \cite{KSS} 
only uses a weak version of the trace formula, namely Weyl's law, and
uses simple soft Sobolev bounds between eigenfunctions on~$Y$.

Here we shall prove a slightly weaker version of Hypothesis A
for spaces $Z$ of real rank one. In order to prepare for it we draw from some notions
which we used in the lecture notes \cite{KS2}.  Together with every 
G-continuous norm $p$ on a Harish-Chandra module
$V$ there comes a family of Laplace-Sobolev norms $(p_k)_{k\in \N}$ on $V^\infty$. 
We briefly recall the definition of the $p_k$ for $k$ even (one can define Sobolev norms $p_k$ for any value 
of $k\in\R_{\geq 0}$). 
For that we let $\Delta\in\U(\gf)$ be a Laplace element such that $\Delta=\Cc+2\Delta_\kf$ where 
$\Cc$ is the Casimir element of $\gf$ and $\Delta_\kf\in \U(\kf)$ is a Laplace element.  Then 
\begin{equation}\label{p2k} p_{2k}(v):=\sum_{j=0}^k p (\Delta^j v) \qquad (v\in V^\infty)\, .\end{equation} 
\par For an irreducible Harish-Chandra module $V$ we let  $\chi_V\in \C$ be the multiple by  which the Casimir 
element $\Cc\in \Zc(\gf)$ acts on $V$. In particular, if $V$ is the Harish-Chandra module
 of a unitary irreducible representation $(\pi, \Hc)$ we write $|\pi| = |\chi_V|$. 
 The following lemma will be used frequently
in the sequel to relate between $p_k$ and $p$.

\begin{lemma}  \label{K-squeeze}Let $V$ be an irreducible Harish-Chandra module. Let $k\in \N_0$. 
Then there exists constants $C_1, C_2>0$, independent of $V$, such that 
\begin{equation}\label{Sobinf} C_1 (1+|\chi_V|)^{k\over 2} p(v) \leq 
p_k(v)\leq C _2 (1+|\chi_V|)^{k\over 2}  p(v)\qquad (v\in V^K)\, .
\end{equation} 
\end{lemma}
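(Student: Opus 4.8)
The plan is to prove the two-sided estimate \eqref{Sobinf} by working on the $K$-type-by-$K$-type level and using the defining formula \eqref{p2k} for the Laplace--Sobolev norm together with the fact that the Casimir $\Cc$ and the $K$-Laplacian $\Delta_\kf$ act by explicit scalars on the relevant subspaces. First I would reduce to the case $k$ even: since the paper only gives \eqref{p2k} for even $k$, it suffices to establish \eqref{Sobinf} for $k=2m$, and the general case follows by interpolation (the $p_k$ for non-integer $k$ being defined by spectral/complex interpolation, so that $p_k$ is log-convex in $k$) — alternatively one restricts attention to even $k$ throughout, which is all that is needed in the sequel. So assume $k=2m$.

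The key observation is that for $v\in V^K$ the element $\Delta_\kf$ acts by $0$ (a $K$-fixed vector is killed by $\kf$, hence by any element of $\U(\kf)$ without constant term; one may take $\Delta_\kf$ normalized so that it annihilates the trivial $K$-type), so that on $V^K$ one has $\Delta = \Cc + 2\Delta_\kf$ acting as the scalar $\chi_V$. More generally, if $v\in V^K$ then $\Delta^j v$ is again $K$-fixed — because $V^K$ is preserved by $\Zc(\gf)$ and by $\U(\kf)^K$, hence by $\Delta$ — and $\Delta^j v = \chi_V^{\,j} v$. Therefore, for $v\in V^K$,
\begin{equation*}
p_{2m}(v) = \sum_{j=0}^m p(\Delta^j v) = \sum_{j=0}^m |\chi_V|^{\,j}\, p(v) = \Big(\sum_{j=0}^m |\chi_V|^{\,j}\Big) p(v).
\end{equation*}
It remains to compare $\sum_{j=0}^m |\chi_V|^{\,j}$ with $(1+|\chi_V|)^{m}$. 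This is elementary: on the one hand $\sum_{j=0}^m |\chi_V|^{\,j} \le \sum_{j=0}^m \binom{m}{j}|\chi_V|^{\,j} = (1+|\chi_V|)^m$ is false as stated (the binomial coefficients go the wrong way for a lower bound), so more carefully one notes $(1+|\chi_V|)^m = \sum_{j=0}^m \binom{m}{j}|\chi_V|^{\,j} \le \binom{2m}{m}\sum_{j=0}^m |\chi_V|^{\,j}$, giving the lower bound with $C_1 = \binom{2m}{m}^{-1/? }$ — here one must be slightly careful because \eqref{Sobinf} is stated with exponent $k/2 = m$, matching $(1+|\chi_V|)^m$; and conversely $\sum_{j=0}^m |\chi_V|^{\,j} \le (m+1)\max(1,|\chi_V|)^m \le (m+1)(1+|\chi_V|)^m$ gives the upper bound. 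Thus with $C_1, C_2$ depending only on $m$ (equivalently on $k$), and crucially \emph{not} on $V$, we obtain \eqref{Sobinf}.

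The only real subtlety — the point I would flag as the main obstacle — is ensuring the constants are genuinely independent of $V$, i.e. uniform over the whole unitary dual. This is automatic in the argument above because everything was reduced to the single scalar inequality relating $\sum_{j=0}^m t^j$ and $(1+t)^m$ for $t = |\chi_V| \ge 0$, whose constants depend only on $m$. A secondary point requiring care is the claim that $\Delta^j$ preserves $V^K$ and acts there as $\chi_V^j$: this uses that $\Delta \in \U(\gf)$ with $\Delta \equiv \Cc \pmod{\U(\gf)\kf}$ in the sense that $\Delta - \Cc = 2\Delta_\kf$ acts by $0$ on $K$-fixed vectors, which follows from the normalization $\Delta_\kf \in \U(\kf)$ being chosen (as is standard, and as in \cite{KS2}) to annihilate the trivial representation of $K$; equivalently $\Delta_\kf$ lies in $\U(\kf)\kf$. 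Once this normalization is recorded, the rest is the elementary inequality, and the lemma follows.
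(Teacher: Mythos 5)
Your proof is correct and follows essentially the same route as the paper: observe that $\Delta_\kf$ annihilates $K$-fixed vectors so that $\Delta^j v=\chi_V^j v$ on $V^K$, plug into \eqref{p2k}, and compare $\sum_{j=0}^{k/2}|\chi_V|^j$ with $(1+|\chi_V|)^{k/2}$ by an elementary inequality whose constants depend only on $k$. The paper's proof is just a terser version of yours (also restricted to $k$ even), so there is nothing further to add.
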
 

\begin{proof}  (For $k$ even) As $v\in V^K$ is $K$-fixed we have $\Delta_\kf v=0$ and thus 
for every $j\in\N$
$$\Delta^j v = (\Cc +\Delta_\kf)^jv  = \chi_V^j v\, .$$    
Hence the assertion follows from (\ref{p2k}). 
\end{proof}

In the weaker version of Hypothesis A 
we replace (\ref{hypo1}) by a Sobolev estimate,
namely

\bigskip 
\par\noindent{\bf Hypothesis B:} {\it  Let  $1\leq p'<p<\infty$.  
Then there exist $C>0$ and $l=l(p,p')\in\N$ 
such that 
\begin{equation}\tag{B1}\label{hypoB1}  
\|m_{v, \eta}\|_{L^{p}(Z_\eta)}  \leq C  
\|m_{v, \eta}\|_{\infty,l}\, .\end{equation}
for all $\pi\in \Lambda_{p'}$, $\eta\in (\Hc_\pi^{-\infty})^H$ and
$v\in \Hc_\pi^K$.}

\begin{rmk}\label{compact quotient}
Assume $\pi$ is non-trivial. 
Under the assumptions in Thm.~\ref{thmC} 
it follows from \cite{KSS} Lemma 6.2 that 
 $H_\eta/H$ is compact, and hence $Z_\eta$ can be replaced 
 by $Z$ both in (\ref{hypo1})
 and in (\ref{hypoB1}).
 \end{rmk}

 \begin{rmk}\label{about p in Hypo B}
Let $1\le p'<p_1< p_2<\infty$.
If (\ref{hypoB1}) holds for the pair $(p_1,p')$, 
for some constant $C=C_1>0$, then it
holds for the pair $(p_2,p')$ as well, for some 
$C=C_2>0$. This follows from the fact that
$$\|f\|_{p_2}^{p_2}= 
\|f\|_\infty^{p_2} \left \|\frac f{\|f\|_\infty}\right\|_{p_2}^{p_2}
\leq
\|f\|_\infty^{p_2} \left \|\frac f{\|f\|_\infty}\right\|_{p_1}^{p_1}
=
\|f\|_\infty^{p_2-p_1}\|f\|_{p_1}^{p_1}$$
for all bounded functions $f$, and thus
$$ \|m_{v,\eta}\|_{p_2}^{p_2}
\leq \|m_{v,\eta}\|_{\infty}^{p_2-p_1}\|m_{v,\eta}\|_{p_1}^{p_1}
\leq \|m_{v,\eta}\|_{\infty,l}^{p_2-p_1} \left( C\|m_{v,\eta}\|_{\infty,l}\right)^{p_1}
=C^{p_1}\|m_{v,\eta}\|_{\infty,l}^{p_2}.$$
 \end{rmk}

 In the hypothesis above it is 
unnecessary to include an analogue of 
(\ref{hypo2}), as such an analogue can be derived
from (\ref{hypoB1}). This is the content of the next lemma.
For $R>0$ we set 
 $$A^-_R:=\{ a\in A^-\mid  \rho_\nf (\log a)  \geq -R\}\, $$ 
 where $ \rho_\nf=\frac12\operatorname{tr}\ad_\nf\in\af^*$.
 Further we set 
 $$A_{Z,R}^- :=  A_R^- A_H / A_H \subset A_Z^-$$
 and if $D\subset G$ is a compact set
 $$D_R:= D A_{Z,R}^- \W \cdot z_0 = D A_R^-  \W \cdot  z_0\subset Z\, .$$

\begin{lemma} \label{d formula}
Let $Z$ be a wavefront real spherical space for which Hypothesis B is valid.
With $p'$, $p$, and $l=l(p,p')$ as above let
\begin{equation}  \label{d-form} d=\frac{1}{4} (lp + \dim\af_Z (l+\dim \gf +1))\, .\end{equation}
 Then there exists  a compact subset $D\subset G$ 
 such that
\begin{equation}\tag{B2} \label{hypoB2} 
\|m_{v, \eta}\|_\infty = \| m_{v, \eta}\|_{\infty,  D_{d \log (1 +|\pi|)}}
\,   \end{equation} for all $\pi\in \Lambda_{p'}$, $\eta\in (\Hc_\pi^{-\infty})^H$ and
$v\in \Hc_\pi^K$.
\end{lemma}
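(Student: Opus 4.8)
The statement asks us to show that the supremum of a generalized matrix coefficient $m_{v,\eta}$ (for $\pi\in\Lambda_{p'}$, $v$ a $K$-fixed vector) is attained on the explicit truncated region $D_{d\log(1+|\pi|)}$, where $d$ is given by \eqref{d-form}. The strategy is to combine the polar decomposition \eqref{polar} with the known $L^p$-bound \eqref{hypoB1} from Hypothesis B together with the volume estimate \eqref{vbound} and the invariant Sobolev lemma \eqref{ISL}. The point is that a matrix coefficient which is ``large'' far out in the cone $A_Z^-$ would, by the lower volume bound, force too much $L^p$-mass, contradicting \eqref{hypoB1} once we track the dependence on $|\pi|$ through the Sobolev norms via Lemma~\ref{K-squeeze}.

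First I would reduce, using the polar decomposition $Z=\Omega A_Z^-\W\cdot z_0$ with $\Omega=FK$ compact and the $K$-invariance of $|m_{v,\eta}|$ for $K$-fixed $v$, to estimating $|m_{v,\eta}(\omega a\sw\cdot z_0)|$ for $\omega\in\Omega$, $a\in A_Z^-$, $\sw\in\W$. The complement $Z\setminus D_R$ corresponds to the part of the cone with $\rho_\nf(\log a)<-R$, i.e. $a^{-2\rho}$ exponentially large (of order $\gtrsim e^{2R}$ roughly, after matching $\rho$ with $\rho_\nf$ on $\af_Z$). On this region I would apply the invariant Sobolev lemma \eqref{ISL} on $Z_\eta$ (invoking Remark~\ref{compact quotient} to work on $Z$ itself when $\pi$ is non-trivial), which gives $|m_{v,\eta}(z)|\le C\vb(z)^{-1/p}\|m_{v,\eta}\|_{p;k}$; but more usefully, I would run the argument in the opposite direction: estimate $\|m_{v,\eta}\|_{L^p}$ from below by integrating $|m_{v,\eta}|^p$ over the region where it is comparable to its sup, using the integral inequality Lemma~\ref{lemma int-ineq}\eqref{intineq1} to pass from $\int_Z$ to $\int_B\int_{A_Z^-}(\cdot)\,a^{-2\rho}\,da\,dg$. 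If the sup were attained at some $a_*$ with $\rho_\nf(\log a_*)$ very negative, then by equicontinuity (Lemma~\ref{wfl}, the wavefront lemma) $|m_{v,\eta}|$ stays comparable to its sup on a fixed-size neighborhood, and the weight $a_*^{-2\rho}$ there is exponentially large, producing a lower bound on $\|m_{v,\eta}\|_p^p$ of the form $(\text{const})\cdot a_*^{-2\rho}\|m_{v,\eta}\|_\infty^p$.

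Combining this lower bound with the upper bound \eqref{hypoB1}, $\|m_{v,\eta}\|_p\le C\|m_{v,\eta}\|_{\infty,l}$, and then using Lemma~\ref{K-squeeze} to convert the $l$-th Sobolev norm into $(1+|\pi|)^{l/2}\|m_{v,\eta}\|_\infty$, I get
$$a_*^{-2\rho}\,\|m_{v,\eta}\|_\infty^p \;\le\; C\,(1+|\pi|)^{lp/2}\,\|m_{v,\eta}\|_\infty^p,$$
hence $a_*^{-2\rho}\le C(1+|\pi|)^{lp/2}$, i.e. $-\rho_\nf(\log a_*)$ is bounded by roughly $\tfrac{lp}{4}\log(1+|\pi|)$ plus a constant. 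The extra terms $\dim\af_Z(l+\dim\gf+1)$ in the definition \eqref{d-form} of $d$ should come from the more careful bookkeeping: one does not integrate over a fixed-size ball but over a region whose size in the $A_Z^-$-direction is allowed to grow like $\log(1+|\pi|)$ (so as to absorb the polynomial loss in $|\pi|$ coming from the Sobolev norms, and from the number of $K$-types / dimension factors entering the matrix-coefficient bounds), and the factor $\dim\af_Z$ accounts for the volume of an $\ell^\infty$-box of that radius in $\af_Z\cong\R^{\dim\af_Z}$. I would choose $D$ a fixed compact neighborhood of $\1$ large enough that $B\subset D$ works in Lemma~\ref{lemma int-ineq}\eqref{intineq1} and so that the equicontinuity neighborhood from Lemma~\ref{wfl} fits inside $D$.

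**Main obstacle.** The delicate point is getting the constant $d$ \emph{exactly} as in \eqref{d-form} rather than just ``some constant times $\log(1+|\pi|)$'': this requires carefully propagating the polynomial-in-$|\pi|$ factors through (i) the Sobolev-to-sup conversion of Lemma~\ref{K-squeeze} (contributing $lp/4$), and (ii) the volume of the truncated cone region of radius $\sim\log(1+|\pi|)$ needed to make the $L^p$-mass argument quantitative, which contributes the $\dim\af_Z(l+\dim\gf+1)$ term — here $\dim\gf$ presumably enters because one needs the invariant Sobolev lemma \eqref{ISL} with $k>\dim G/p$, so $k\asymp\dim\gf$, and that $k$ reappears as a power of $(1+|\pi|)$. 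Keeping all these exponents aligned, while also handling the finitely many $\sw\in\W$ and the compact factor $\Omega$ uniformly, is the bookkeeping heart of the proof; the conceptual input (wavefront lemma + volume estimate + Hypothesis B) is already in place.
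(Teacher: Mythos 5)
Your overall strategy (lower-bound the $L^p$-norm by the mass near the point where the sup is attained, compare with the upper bound from (B1) via Lemma \ref{K-squeeze}, and read off a bound on $-\rho(X_0)$) is indeed the paper's strategy. But there is a genuine gap at the central quantitative step, and it is precisely the step that produces the second term in (\ref{d-form}).

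You claim that ``by equicontinuity (Lemma \ref{wfl}) $|m_{v,\eta}|$ stays comparable to its sup on a fixed-size neighborhood'' of the maximizing point. This is false, and the wavefront lemma cannot deliver it: that lemma is a purely geometric statement about translations of neighborhoods of $z_0$, and says nothing about the oscillation of an eigenfunction. A $K$-fixed eigenfunction with large $|\pi|$ can drop from its maximum on a scale that shrinks as $|\pi|\to\infty$, so the neighborhood on which $|f|\geq\tfrac12\|f\|_\infty$ must be taken $\pi$-dependent. With a fixed-size neighborhood you would obtain $e^{-2\rho(X_0)}\lesssim(1+|\pi|)^{lp/2}$ and hence only $d=lp/4$; your attempt to recover the missing term $\dim\af_Z(l+\dim\gf+1)$ by letting the integration region \emph{grow} like $\log(1+|\pi|)$ cannot work either, since a box of radius $\log(1+|\pi|)$ has volume $(\log(1+|\pi|))^{\dim\af_Z}$, which is logarithmic in $|\pi|$ and cannot account for a polynomial power of $(1+|\pi|)$ in the final inequality.

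The missing idea is a quantitative gradient bound: apply the invariant Sobolev lemma (\ref{ISL}) to the derivatives $L_Yf_g$ (not to $f$ itself), use $\rho(X)\le 0$ on $\af_Z^-$ to discard the volume weight, and then convert $\|f\|_{p,s+1}$ into $\|f\|_{\infty,l+s+1}\lesssim(1+|\pi|)^{(l+s+1)/2}\|f\|_\infty$ via (B1) and Lemma \ref{K-squeeze}. The mean value theorem then gives $|f|\geq\tfrac12\|f\|_\infty$ on a set of the form $U\times\{|X-X_0|<\delta\}$ with $\delta\asymp(1+|\pi|)^{-(l+s+1)/2}$. The volume $\delta^{\dim\af_Z}$ of the shrinking ball in $\af_Z$ is exactly the source of the term $\dim\af_Z(l+s+1)$ with $s>\dim G/p$, i.e.\ the term $\dim\af_Z(l+\dim\gf+1)$ in (\ref{d-form}). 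Two smaller points: the lower bound on $\|f\|_p^p$ uses part (\ref{intineq2}) of Lemma \ref{lemma int-ineq} (the two-sided $K\times A_Z^-$ inequality), not part (\ref{intineq1}) as you cite, which goes in the wrong direction for this purpose; and the existence of a maximizing point needs the remark that $f$ decays at infinity, which follows from (\ref{vbound}) together with the invariant Sobolev lemma.
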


\begin{proof}\footnote{ We allow the same symbol $C$ for
universal positive constants, independently of their actual values.}  
We recall the polar decomposition $Z=\Omega A_Z^- \W\cdot z_0$
in (\ref{polar}) and the volume bound (\ref{vbound}) for $\vb$.  Since $Z$ is wavefront 
we note that $\vb$ is increasing to infinity.
\par For $f=m_{v,\eta}$ and $g\in \Omega$ we set $f_g(z)=f(g\cdot z)$ for $z\in Z$.
We normalize $\|f\|_\infty=1$. Let now 
$g_0\in \Omega$,  $\sw\in \W$ and $X_0\in \af_Z^-$ be such that 
$|f_{g_0}(\exp(X_0) \sw\cdot z_0)|=\|f\|_{\infty}$ (these elements exist as $f$ decays at infinity by 
(\ref{vbound}) and (\ref{phi sob}) below).
We recall the invariant Sobolev Lemma from (\ref{ISL}): 

\begin{equation}\label{phi sob} |\phi(z)| \leq C   \vb(z)^{-{1\over p}} \| \phi\|_{p, s} \qquad (z\in Z) \end{equation}
for $s>{\dim G\over p}$ and all $\phi\in L^p(Z)^\infty$.

Thus (\ref{phi sob}) applied specifically to $\phi=L_Y f_g$ for $g\in \Omega$,  
$Y\in \gf$ with $\|Y\|=1$ and $z=\exp(X)\sw\cdot z_0$ 
with $X\in\af_Z^-$ yields

\begin{equation} \label{LYder}  |L_Y f_g(\exp(X)\sw\cdot z_0)|
\leq  C e^{{2\over p} \rho(X)} \|f_g \|_{p, s+1}\leq 
C  \|f\|_{p, s+1} \, \end{equation} 
for some $C>0$.  For the second inequality we used $\rho(X)\leq 0$  (as $Z$ as wavefront) and 
$\sup_{g\in \Omega} \| L_g f \|_{p, s+1}\leq C \|f\|_{p, s+1}$ for all $f\in L^p(Z)^\infty$ and a constant $C$ which only 
depends on $\Omega$ and $s$. The latter follows as the Laplace-Sobolev norms from (\ref{p2k}) compare to the standard 
Sobolev norms (see \cite{BK}, Prop. 3.5).

Now define a function on $K\times A_Z^{-}$  by

$$ F(k, X)= f_{g_0}(k\exp(X) \sw\cdot z_0)$$
and observe that (\ref{LYder}) implies 

$$ \|dF (k, X)\|\leq  C \|f\|_{p, s+1} \qquad (k\in K, X\in \af_Z^-)\, .$$
With (\ref{hypoB1}) and (\ref{Sobinf})
we thus obtain 
$$ \|d F(k, X) \| \leq  C \|f\|_{\infty, l+s+1} \leq C' 
(1 +|\pi|)^{l+s+1\over 2}\, $$
for some positive constant $C'$.
Set $\delta:={1\over 4 C'} (1 +|\pi|)^{-{l+s+1\over 2}}$. Then the mean value theorem 
implies that  there exists a
neighborhood $U$ of $\1$ in $K$ such that 

 \begin{equation} \label{F-K-U} |F(k, X)|\geq \frac12\qquad (k\in U, |X-X_0|<\delta)\, . \end{equation} 

\par From Lemma \ref{lemma int-ineq} (\ref{intineq2}) we obtain

\begin{equation} \label{int1}\int_{K}\int_{A_Z^-}  |\phi(ka\sw\cdot z_0)|^p   a^{-2\rho}   \ da \ dk \leq C \|\phi\|_p^p \qquad (\phi\in L^p(Z))\, .\end{equation}

\par  We obtain from  (\ref{F-K-U}) 
and by applying (\ref{int1}) to $\phi=f_{g_0}$ a constant $C>0$ such that 
$$ \|f\|_p\geq C \left[\vol_K(U) \vol_{\af_Z} (\{ |X-X_0|\leq \delta\} )  e^{-2\rho(X_0)}\right]^{1\over p}\, ,$$
or equivalently (by our choice of $\delta$)
\begin{equation} \label{f-p-C} \|f\|_p \geq C (1+|\pi|)^{ - \dim \af_Z {{l+s+1}\over 2p}} e^{-\frac 2{p}\rho(X_0)}\, .\end{equation} 
Now (\ref{hypoB1}) together with Lemma \ref{K-squeeze} (with the $G$-continuous norm $p$ given by 
$p(v):=\sup_{z\in Z} |m_{v,\eta}(z)|$ for $v\in V$) yields 
$$\|f\|_p\leq C\| f\|_{\infty, l} \leq C ( 1+|\pi|)^{l\over 2} \|f\|_\infty= C  ( 1+|\pi|)^{l\over 2} \, .$$
Inserting this inequality into (\ref{f-p-C}) gives 
$$e^{|\frac 2{p}\rho(X_0)|}=\e^{-\frac 2{p}\rho(X_0)}\leq C(1+|\pi|)^{{l\over 2}+ \dim \af_Z {{l+s+1}\over 2p}}$$
which shows
$$|\rho(X_0)|\leq d \log(1+|\pi|)  +C $$ 
for a constant $C$ independent of $\pi$ and 
$$d=\frac{1}{4} (lp + \dim\af_Z (l+s+1))\, .$$
This shows that the maximum of $|f|$ is attained in a region as asserted (with 
$D=\Omega A^-_C$).
\end{proof}

\begin{theorem}\label{thm error bound}
Let $Z$ be a wavefront real spherical space for which Hypothesis B is valid, 
and assume all bulleted items in Thm.~\ref{thmC}.
Let $l=l(p,p')$ for $p':=p_H(\Gamma)$ and $p>p'$ and let  $d$ be as in (\ref{d-form}). 
Then there exists a constant $C=C(p)>0$ such that 
\begin{equation}\label{new error bound}
 \err(R,\Gamma)\leq C |B_R|^{-{1\over (2s'+1)p}}\, 
\end{equation}
for all $R\geq 1$, where $s':= \dim G/K +l + 2d$.
\end{theorem}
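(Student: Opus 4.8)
\emph{Plan.} I would deduce this from Theorem~\ref{thmC} by re-running the proof of \cite{KSS}, Thm.~7.3, with its two conditions (\ref{hypo1}), (\ref{hypo2}) replaced by the quantified statements (\ref{hypoB1}) of Hypothesis~B and (\ref{hypoB2}) of Lemma~\ref{d formula}, keeping track of the powers of $(1+|\pi|)$ that this substitution introduces. Recall the structure of that proof: one smooths $\1_R$ by an approximate identity $\varphi_\epsilon$, rewrites $F^\Gamma_{R,\epsilon}-d\mu_Y$ by means of the adjointness relation~(\ref{ufo}) and the spectral decomposition of $L^2(\Gamma\backslash G)$, and bounds the contribution of each occurring representation $\pi$ by a constant times
$$|B_R|^{-1/p}\,\|m_{\pi(\varphi_\epsilon)v,\eta}\|_{L^p(Z_\eta)},$$
where $v\in\Hc_\pi^K$ is normalised; the factor $|B_R|^{-1/p}$ is the H\"older loss $\|\1_R\|_{L^{p'}(Z)}=|B_R|^{1/p'}$ against the main term $|B_R|$. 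Since the intrinsic balls $B_R$ are $K$-invariant, only $K$-spherical $\pi\in\hat G_s$ contribute, so that Lemma~\ref{K-squeeze} applies to the relevant $K$-fixed vectors; and as $\pi$ may be taken non-trivial, Remark~\ref{compact quotient} lets us replace $Z_\eta$ by $Z$ throughout.

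\emph{The substitution.} For a $K$-fixed $v$ and $\pi\in\Lambda_{p'}$, Hypothesis~B and Lemma~\ref{K-squeeze}, applied to the $G$-continuous norm $w\mapsto\|m_{w,\eta}\|_\infty$ on the Harish-Chandra module of $\pi$, give
$$\|m_{v,\eta}\|_{L^p(Z)}\ \le\ C\,\|m_{v,\eta}\|_{\infty,l}\ \le\ C\,(1+|\pi|)^{l/2}\,\|m_{v,\eta}\|_\infty,$$
which is the analogue of (\ref{hypo1}) at the cost of $(1+|\pi|)^{l/2}$, i.e.\ of $l$ extra Laplace--Sobolev derivatives on the automorphic side. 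Lemma~\ref{d formula} supplies the analogue of (\ref{hypo2}): the supremum $\|m_{v,\eta}\|_\infty$ is attained on $D_{d\log(1+|\pi|)}$, a compact set that grows only logarithmically in $|\pi|$; by the volume bound~(\ref{vbound}), on which $\vb\asymp a^{-2\rho}$ with $|\rho(\log a)|\le d\log(1+|\pi|)$, estimating the corresponding automorphic form $\phi$ on $Y$ over the lift of this region by a Sobolev norm costs the base order $\dim(G/K)$ of derivatives together with one further factor $(1+|\pi|)^{d}$, hence $2d$ additional derivatives. Altogether the bound on $\|m_{\pi(\varphi_\epsilon)v,\eta}\|_{L^p(Z)}$ that is needed takes exactly the form used in \cite{KSS} with the Sobolev order $k$ there replaced by $s'=\dim(G/K)+l+2d$.

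\emph{Conclusion.} With this input the remainder of the proof of \cite{KSS}, Thm.~7.3, goes through unchanged: Weyl's law controls the spectral sum, the rapid decay of $\widehat\varphi$ truncates it at $|\pi|\lesssim\epsilon^{-2}$, one arrives at an estimate of the shape
$$\err(R,\Gamma)\le C\big(\epsilon+|B_R|^{-1/p}\epsilon^{-2s'}\big),$$
and optimising in $\epsilon$ (taking $\epsilon\asymp|B_R|^{-1/((2s'+1)p)}$) yields~(\ref{new error bound}); in the non-compact case the regularisation of the Eisenstein contribution enters exactly as there, which is why all bulleted hypotheses of Thm.~\ref{thmC} are imposed. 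One also records that $s'=\dim(G/K)+l+2d$ meets the Sobolev threshold required in Thm.~\ref{thmC}, which is immediate since $l\ge1$ and $d>0$, and, in the non-compact case, since $d$ already contains the term $\tfrac14\dim\af_Z\dim\gf$.

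\emph{Main difficulty.} The crux is the bookkeeping in the second step: one must verify that replacing the fixed compact set of (\ref{hypo2}) by the $|\pi|$-dependent region $D_{d\log(1+|\pi|)}$ costs a genuinely polynomial factor in $(1+|\pi|)$, of degree exactly $d$, so that precisely $2d$ extra derivatives suffice and one lands on $s'$ rather than on something larger, and that all accumulated $(1+|\pi|)$-powers are absorbed correctly into Laplace--Sobolev norms on $Y$ via Lemma~\ref{K-squeeze}. Everything else is either quoted from \cite{KSS} or a routine consequence of Lemmas~\ref{K-squeeze} and~\ref{d formula} and the estimates~(\ref{vbound}), (\ref{phi sob}).
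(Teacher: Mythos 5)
Your proposal follows essentially the same route as the paper: the paper's entire content for this theorem is the modified version of \cite{KSS} Prop.~6.5 (Proposition \ref{mpro} here), in which (\ref{hypo1}) is replaced by (\ref{hypoB1}) at the cost of $l$ Laplace--Sobolev derivatives via Lemma \ref{K-squeeze}, and (\ref{hypo2}) is replaced by (\ref{hypoB2}) at the cost of $2d$ further derivatives coming from the lower bound $\inf\vb_Y|_{B_{|\pi|}}\geq C(1+|\pi|)^{-2d}$ (Lemma \ref{Siegel lemma}) in the invariant Sobolev lemma on $Y$, after which the derivation of (\ref{new error bound}) is quoted from \cite{KSS} Thm.~7.3 exactly as you describe. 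Your bookkeeping of the exponent $s'=\dim(G/K)+l+2d$ and the optimisation in $\epsilon$ match the paper's.
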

The bound (\ref{error bound}) is obtained in \cite{KSS} Thm.~7.3
from the estimate of 
Prop.~6.5   via standard techniques quite in resemblance to the Euclidean case, see (\ref{eucl err1}) - 
(\ref{eucl err3}).  With
Hypothesis A replaced by B, the analogue of Prop.~6.5 becomes:

\begin{prop}\label{mpro} Assume that $Z$ is wavefront real spherical and all the bulleted 
assumptions from Thm.~\ref{thmC}. Assume moreover, that Hypothesis B is valid. Let

$$C^{\infty}_b(Y)^K_{\rm van}:=\{ f \in C^\infty(Y)\mid f \ \hbox{bounded, $K$-invariant and $\int_Y f(y) \ dy  =0$}\}\, .$$

Let $p>p_H(\Gamma)$. Then the  map
$${\rm Av_{H}}:
C^{\infty}_b(Y)^K_{\rm van} \to L^{p}(Z)^K, \ \ \phi\mapsto \phi^{H}; \ \phi^H(gH) =\int_{H/H_\Gamma} \phi(gh) \ d(hH_\Gamma)$$
is continuous. More precisely, let $s:=\dim( G/K)$ and $r:=\operatorname{rank} (G/K)$. Then
for all
\begin{enumerate}
\item $k> s+1$ if $Y$ is compact.
\item $k> \frac{r+1}{2} s  +1$ if $Y$ is non-compact and $\Gamma$ is arithmetic
\end{enumerate}
there exists a constant
$C=C(p, k)>0$ such that
$$\|\phi^{H}\|_{L^p(Z)} \leq
C \|\phi\|_{\infty,k+l +2d} \qquad (\phi \in
C^{\infty}_b(Y)_{\rm van}^K)\, .$$
Here $l=l(p, p_H(\Gamma))$ is the constant in Hypothesis B and the constant $d$ is from Lemma \ref{d formula}.
\end{prop}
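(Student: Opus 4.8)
The plan is to run a spectral argument on $Y=G/\Gamma$, in close parallel with the proof of Proposition~6.5 in \cite{KSS}, the only structural change being that the $L^p$-versus-$L^\infty$ bound (A1) is replaced by the Sobolev bound \eqref{hypoB1} of Hypothesis~B, and the \emph{fixed} compact set of (A2) by the slowly growing set $D_{d\log(1+|\pi|)}$ furnished by Lemma~\ref{d formula}. I would start by decomposing $\phi$ along the automorphic spectrum of $L^2(Y)$: since $\phi$ is $K$-invariant only $K$-spherical $\pi\in\hat G_s$ occur, and since $\int_Y\phi=0$ the trivial representation does not, so $\phi=\sum_\pi\phi_\pi$ plus an integral over the continuous (Eisenstein) part, every contributing $\pi$ being a nontrivial element of $\hat G_s\cap\supp L^2(G/\Gamma)$. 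By the definition \eqref{ic} of $p_H(\Gamma)$ each such $\pi$ satisfies $p_H(\pi)\le p_H(\Gamma)<p$, hence lies in $\Lambda_{p'}$, so Hypothesis~B and Lemma~\ref{d formula} apply to it with the constants $l=l(p,p_H(\Gamma))$ and $d$ of the statement. Each $\phi_\pi$ is $K$-invariant and automorphic with $\Delta\phi_\pi=\chi_\pi\phi_\pi$ (on $K$-fixed functions $\Delta_\kf$ acts by $0$, so $\Delta$ acts by the Casimir scalar $\chi_\pi$, $|\chi_\pi|=|\pi|$), and $\|\phi_\pi\|_{L^2(Y)}\le\|\phi\|_{L^2(Y)}$.

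The second step identifies $\phi_\pi^H$ with a generalized matrix coefficient; this is where the compactness of $H/\Gamma_H$ is used decisively. Averaging the $\Gamma$-automorphic distribution vector $\xi_\pi$ realizing $\phi_\pi$ over the \emph{compact} orbit $H/\Gamma_H$ produces a convergent $H$-period $\eta_\pi\in(\Hc_\pi^{-\infty})^H$ (the integral over a compact set converges even for Eisenstein series), and with $v_\pi\in\Hc_\pi^K$ the spherical vector realizing $\phi_\pi$, normalized so that $\|v_\pi\|\asymp\|\phi_\pi\|_{L^2(Y)}$, one gets $\phi_\pi^H=m_{v_\pi,\eta_\pi}$; by Remark~\ref{compact quotient}, $H_{\eta_\pi}/H$ is compact, so $m_{v_\pi,\eta_\pi}$ may be regarded as a function on $Z$ itself, and $\phi^H=\sum_\pi\phi_\pi^H$ (plus the continuous integral). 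Now Minkowski's inequality in $L^p(Z)$ reduces everything to estimating $\sum_\pi\|\phi_\pi^H\|_{L^p(Z)}$ (an integral over the continuous part). For a fixed $\pi$, \eqref{hypoB1} gives $\|\phi_\pi^H\|_{L^p(Z)}=\|m_{v_\pi,\eta_\pi}\|_{L^p(Z_{\eta_\pi})}\le C\|m_{v_\pi,\eta_\pi}\|_{\infty,l}$, and Lemma~\ref{K-squeeze}, applied to the $G$-continuous norm $v\mapsto\sup_Z|m_{v,\eta_\pi}|$ on the Harish-Chandra module of $\pi$, upgrades this to $C(1+|\pi|)^{l/2}\|m_{v_\pi,\eta_\pi}\|_\infty$. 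So it remains to bound $\|m_{v_\pi,\eta_\pi}\|_\infty=\|\phi_\pi^H\|_\infty$.

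For that I would invoke Lemma~\ref{d formula}, i.e.\ \eqref{hypoB2}: the maximum of $|\phi_\pi^H|$ is attained inside $D_{d\log(1+|\pi|)}$. Writing $\phi_\pi^H(g\cdot z_0)=\int_{H/\Gamma_H}\phi_\pi(gh)\,d(h\Gamma_H)$ and bounding the integrand by the supremum of $|\phi_\pi|$ over the image in $Y$ of $\Omega A^-_{d\log(1+|\pi|)}\W F_H$ (for $F_H\subset H$ a compact fundamental domain of $\Gamma_H$, $\vol F_H=1$) — a region of diameter $O(\log(1+|\pi|))$, covered by polynomially-in-$|\pi|$ many unit balls — a local elliptic (invariant Sobolev) estimate for the eigenfunction $\phi_\pi$ yields $\|\phi_\pi^H\|_\infty\le C(1+|\pi|)^{a}\|\phi_\pi\|_{L^2(Y)}$, with $a$ depending only on $\dim(G/K)$ in the cocompact case and also on $\rank(G/K)$ in the arithmetic non-cocompact case (there the region reaches depth $O(\log(1+|\pi|))$ into the cusps, where the injectivity radius is $(1+|\pi|)^{-O(1)}$; this is exactly why the localisation of Lemma~\ref{d formula} is needed, no polynomial bound on $\phi_\pi$ being available over all of a non-cocompact $Y$). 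By Lemma~\ref{K-squeeze} applied to $\phi_\pi$ inside its automorphic copy of $\pi$ with the ($G$-invariant) norm $\|\cdot\|_{L^2(Y)}$, and since $\vol Y=1$ gives $\|\cdot\|_{L^2(Y),m}\le\|\cdot\|_{\infty,m}$ while spectral projection is a contraction, one has $\|\phi_\pi\|_{L^2(Y)}\le C(1+|\pi|)^{-m/2}\|\phi\|_{\infty,m}$ for $m:=k+l+2d$ (the bounded part of the spectrum contributing a harmless finite amount). Hence the $\pi$-term is at most $C(1+|\pi|)^{\,l/2+a-(k+l+2d)/2}\|\phi\|_{\infty,k+l+2d}=C(1+|\pi|)^{\,a-k/2-d}\|\phi\|_{\infty,k+l+2d}$, and by Weyl's law for the spherical Laplacian on $Y$ — in the arithmetic case supplemented by the bound on the density of the continuous spectrum, the source of the factor $\tfrac{r+1}{2}$ — the series $\sum_\pi(1+|\pi|)^{\,a-k/2-d}$ converges once $k>s+1$ (cocompact) resp.\ $k>\tfrac{r+1}{2}s+1$ (arithmetic): the point is that the lower bound $d>\tfrac14\dim(G/K)$ forced by \eqref{d-form} already makes $k/2+d-a$ exceed the Weyl exponent $\tfrac12\dim(G/K)$ under these thresholds, which is the role of the $+2d$ in the Sobolev order. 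This gives $\|\phi^H\|_{L^p(Z)}\le C\|\phi\|_{\infty,k+l+2d}$.

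I expect the main difficulty to be the non-cocompact case: making the local eigenfunction estimate of the third step uniform as the logarithmic region penetrates the cusps (this uses reduction theory for the arithmetic lattice and is what forces the worse threshold on $k$), and controlling the contribution of the continuous (Eisenstein) spectrum to $\phi^H$ — where the compactness of $H/\Gamma_H$ is indispensable, being precisely what renders the $H$-periods of Eisenstein series convergent, and where the integral over the continuous parameter must be dominated by the density bound above. In the cocompact case all of this is comparatively routine, and the localisation of Lemma~\ref{d formula} could in principle be dispensed with at the price of a larger $k$.
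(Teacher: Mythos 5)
Your overall strategy is the right one and close to what the paper does: decompose $\phi$ spectrally on $Y$, recognize each $\phi_\pi^H$ as a generalized $H$-matrix coefficient, invoke \eqref{hypoB1}, use Lemma~\ref{d formula} to localize the sup to $D_{d\log(1+|\pi|)}$, feed the localization into a Siegel-type bound on $\vb_Y$ plus Bernstein's invariant Sobolev lemma, and close the sum over $\pi$ with Weyl's law (resp.\ Ji's bound). Where you diverge from the paper, and where a genuine gap appears, is in how you sum over $\pi$.

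You take the Minkowski sum $\|\phi^H\|_p\le\sum_\pi\|\phi_\pi^H\|_p$ and then discard orthogonality by bounding each $\|\phi_\pi\|_{L^2(Y)}$ against the full $\|\phi\|_{2,m}$ via ``spectral projection is a contraction.'' Tracing your exponents with the actual local bound forced by the argument: the Siegel lemma gives $\inf\vb_Y|_{B_{|\pi|}}\ge C(1+|\pi|)^{-2d}$, Bernstein's lemma gives $\sup_{B_{|\pi|}}|\phi_\pi|\le C(1+|\pi|)^{d}\|\phi_\pi\|_{2,n}$ with $n>s/2$, and Lemma~\ref{K-squeeze} turns $\|\phi_\pi\|_{2,n}$ into $(1+|\pi|)^{n/2}\|\phi_\pi\|_2$. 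So your $a$ is $d+n/2>d+s/4$, \emph{not} less than $d$ as your final sentence requires. Your $\pi$-term then carries the exponent $a-k/2-d=(n-k)/2$, and with multiplicity growth $\sim R^{s/2}$ convergence needs $(k-n)/2>s/2+1$, hence $k>n+s+2>\tfrac{3s}{2}+2$ -- appreciably worse than the $k>s+1$ the proposition asserts. The crucial device you are missing is the Cauchy--Schwarz step used in the paper: writing $\|\phi_\pi\|_{\infty,B_{|\pi|},l}\le(1+|\pi|)^{-m/2}\|\phi_\pi\|_{\infty,B_{|\pi|},m+l}$ (Lemma~\ref{K-squeeze}) and then
$$\sum_\pi(1+|\pi|)^{-m/2}\|\phi_\pi\|_{\infty,B_{|\pi|},m+l}
\le\Big(\sum_\pi(1+|\pi|)^{-m}\Big)^{1/2}\Big(\sum_\pi\|\phi_\pi\|^2_{2,m+n+l+2d}\Big)^{1/2},$$
which by orthogonality of the $\phi_\pi$ in $L^2(Y)$ collapses to $C\|\phi\|_{2,m+n+l+2d}$. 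That separates the Weyl exponent $m>s/2+1$ from the Sobolev shift $n>s/2$ \emph{additively} (giving $k=m+n>s+1$) rather than multiplying them as your Minkowski sum does. A second, smaller issue: you only sketch the Eisenstein contribution, while the paper's argument there genuinely requires the $\lambda$-integral version of the same Cauchy--Schwarz and the explicit density bound \eqref{mu_c}; ``dominated by the density bound above'' does not yet carry the same weight.
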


\begin{proof} For the sake of completeness we give the slightly modified proof. In addition we take the opportunity 
to correct a few mistakes in the proof of Prop. 6.5 of  \cite {KSS}.

To begin with we recall part of the  Langlands Plancherel-Theorem (see \cite{L} or \cite{A}, p. 256, Main Theorem 
(adelic setup)) for $L^2(Y)$ which is of relevance to us, i.e. 
the $G$-invariant subspace $\Hc\subset L^2(Y)$ which is generated by the $K$-spherical spectrum. 
For the formulation of the Plancherel Theorem we restrict ourselves to the arithmetic setup, i.e. $G$ is 
defined over $\Q$ and $\Gamma$ is commensurable to $G(\Z)$.  In the sequel we use the language from \cite{Bo} and let $P_1, \ldots, P_\ell $ be 
a complete set of representatives of the $\Gamma$-conjugacy classes of minimal $\Q$-parabolic 
subgroups (cusps). Further for each $1\leq j\leq \ell $ we let $[P_j]$ be the finite set of parabolic subgroups 
which contain $P_j$. One calls $P,Q\in[P_j]$ associated provided that their Levi-subgroups are conjugate.
We write $[P_j]_{\rm Ass}$ for the corresponding association classes and set ${\mathcal P}=\bigcup_{j=1}^\ell [P_j]_
{\rm Ass}$.  Tacitly we pick in the sequel for each association class a representative and by slight abuse of notation write 
$P\in {\mathcal P}$ understanding  that $P$ is parabolic subgroup of $G$.   

\par The rough Plancherel decomposition is $\Hc= \bigoplus_{P\in{\mathcal P}}  \Hc_P$
with $\Hc_d:=\Hc_G$ yielding the discrete spectrum. 
We let $P=M_P A_P N_P$ be the associated  Langlands decomposition and recall that the projection 
of $\Gamma\cap P$ to $M_P\simeq P/ A_P N_P$ yields a lattice in $M_P$ (which is anisotropic if $P=P_j$ is minimal). 
Further we denote by $\hat M_{P, d}\subset \hat M$ the $K\cap M_P$-spherical discrete  spectrum.  
For each $\sigma\in \hat M_{P,d}$ we denote by $m(\sigma)\in\N$ the multiplicity of 
$\sigma$ in $L^2(M/ \Gamma_{M_P})$.   For $\sigma\in \hat M_{P, d}$ and $\lambda\in i\af_P^*$
we denote by $\pi_{\sigma,\lambda} =\operatorname{Ind}_P^G (\sigma\otimes \lambda)$ the associated 
unitary principal series (normalized induction).  Then one has a unitary equivalence 
of $G$-modules:

\begin{equation}\label{L-PT}\Hc_P \simeq  \bigoplus_{\sigma\in \hat M_{P, d}} m(\sigma)  \int_{i(\af_P^*)^+}^\oplus \pi_{\sigma,\lambda} \ d\lambda\, .\end{equation}

Let now  $\phi\in C_b^\infty(Y)^K_{\rm van}$ and write
$\phi=\phi_d +\phi_c$ for its decomposition in discrete and continuous
Plancherel parts. 
Observe that in general we do not have $\phi_d, \phi_c\in C_b^\infty(Y)^K_{\rm van}$ as $\phi_d$ might not be bounded in presence of non-trivial residual spectrum. However, as the arguments below will show, this will not cause 
us technical problems. 
 We consider first the discrete part $\phi_d$.

\par If $Y$ is compact it follows from Weyl's law (see \cite{Sh}, Th. 15.2) that
the multiplicities in $L^2(Y)$ satisfy 
$$ \sum_{|\pi|\leq R} m(\pi)
 \sim c_Y R^{s/2} \qquad (R\to \infty)\, ,$$
for some constant $c_Y>0$. 
It follows that
\begin{equation}\label{weight}
\sum_{\pi} m(\pi) (1+|\pi|)^{-m} < \infty
\end{equation}
for all $m> s/2+ 1$.
In case $Y$ is non-compact, we let $\hat G_{\Gamma,d}\subset \hat G$ be the
the discrete support of the Plancherel measure of $L^2(Y)$
and $m(\pi)$ the corresponding multiplicity of $\pi$. 
Assuming $\Gamma$ is arithmetic it is shown in \cite{Ji} that
\begin{equation}\label{mult1} \sum_{\pi \in \hat G_{\Gamma,d} \atop |\pi|\leq R} m(\pi)
 \leq  c_Y R^{rs/2} \qquad (R>0)\, \end{equation}
so that for $m> rs/2+ 1$ we again obtain (\ref{weight}).

\par Let $p>p_H(\Gamma)$.
As $\phi_d$ is in the discrete spectrum we  decompose it
as $\phi_d=\sum_\pi \phi_{\pi}$ according to $\Hc_d \simeq\bigoplus_{\pi} m(\pi) \Hc_\pi$, i.e. 
$\phi_\pi$ is the $\pi$-isotypical part of $\phi_d$.   Since $\dim \Hc_\pi^K \leq1$ and $\phi_\pi$ 
is $K$-fixed, we observe that 
$\phi_\pi\in m(\pi)\Hc_\pi$ generates an irreducible $G$-submodule of $m(\pi)\Hc_\pi$ 
which is isomorphic to $\Hc_\pi$ or $\{0\}$ according to whether $\phi_\pi$ is zero or not. In particular, 
we obtain with (\ref{hypoB1}) that 
$$\|\phi_d^{H}\|_{p} \leq \sum_\pi \|\phi_{\pi}^{H}\|_{p}
\leq C \sum_\pi \|\phi_{\pi}^H\|_{\infty, l}\, .$$
Note that $\pi$ is non-trivial in these sums since $\phi_d$ has vanishing integral (this is the only place
where the vanishing hypothesis on $\phi$ enters the proof). 
With (\ref{hypoB2}) we obtain further
$$\|\phi_d^{H}\|_{p} \leq C \sum_\pi \|\phi_{\pi}^H\|_{\infty,  D_{d\log(1+|\pi|)}, l}\, .$$
Let $B_{|\pi|} = D A_{ d \log (1 +|\pi|)} ^-  \W  H_c\subset G$
where $H_c\subset H$ is a compact subset such that $H_c\Gamma_H =H$. 
Note that $\|\phi_\pi^H\|_{\infty, \Omega_{d\log(1+|\pi|)}}\leq \|\phi_\pi\|_{\infty, B_{|\pi|}}$ which 
together with (\ref{Sobinf})
allows us to estimate the last sum as follows:

$$
\sum_\pi \|\phi_{\pi}^H\|_{\infty, D_{d\log(1+|\pi|)},l}\leq \sum_\pi
\|\phi_{\pi}\|_{\infty, B_{|\pi|}, l}
\leq C \sum_\pi  (1+|\pi|)^{-m/2}\|\phi_{\pi}\|_{\infty,B_{|\pi|}, m+l}
$$
with $C>0$ a constant depending only on $l$, where we used Lemma \ref{K-squeeze} for the last inequality.

Applying the Cauchy-Schwartz inequality combined with (\ref{weight})
we obtain
\begin{equation} \label{b1}\|\phi_d^{H}\|_{p} \leq C \Big(\sum_{\pi} \|\phi_{\pi}\|_{\infty,B_{|\pi|}, m+l}^{2}\Big)^{\frac{1}{2}}\, 
\end{equation}
 with $C>0$. 

\par  In the sequel we view functions on $Y$ as right $\Gamma$-invariant functions on $G$. 
Recall from \cite{KS2}, Sect. 4,  the notion of volume weights on a homogeneous space for $G$ and let 
$\vb_Y$ be a volume weight for $Y=G/\Gamma$. 
{}From the definition of $B_{|\pi|}$ and Lemma \ref{Siegel lemma} below  applied to $\Omega=\W H_c$
we now infer 
\begin{equation} \label{S-bound} \inf \vb_Y|_{B_{|\pi|}}  \geq  C  ( 1+|\pi|)^{-2d}  \qquad  (\pi \in \hat G_s)\, .
\end{equation}
The invariant Sobolev lemma (see \cite{KS2}, Lemma 4.2) for $K$-invariant functions on $G/\Gamma$
then gives  us with $n>{s\over 2}$ 
the bound 
$$ \| \phi_\pi\|_{B_{|\pi|}, \infty} \leq  C ( 1+| \pi|)^ d \|\phi_{ \pi}\|_{2, n} \leq C 
\| \phi_\pi\|_{2, n+ 2d}\,,$$
again by use of (\ref{Sobinf}). 
Thus we obtain from (\ref{b1}) that 
\begin{eqnarray}  \notag\| \phi_d^H\|_p
&\leq& C \Big( \sum_\pi   \| \phi_\pi\|_{2, m+n +l+ 2d}^2  \Big)^{1\over 2}\leq C \| \phi_d\|_{2, m+n+l+2d} \\
 \label{b2}&\leq& C \| \phi\|_{2, m+n+l+2d} \leq 
C \| \phi\|_{\infty, m+n+l +2d}\, .
\end{eqnarray}

With  $k=m+n$ this shows the asserted bound for the discrete  spectrum. 
\par We move on to the continuous part $\phi_c$ of $\phi$.  
We recall the Plancherel formula for $\Hc_P$ from (\ref{L-PT})  and note that $\phi_c=\sum_{P\in{\mathcal P}\atop P \neq G}  \phi_P$.  Accordingly we have $\phi_P=\sum_{\sigma \in \hat M_{P,d}} \phi_{P,\sigma}$ where $\phi_{P,\sigma}$ refers to the $\sigma$-isotypical part. 
Fix now $\sigma\in \hat M_{P,d}$ and unitary model $V_\sigma$ of it. 
We choose  $\Hc_{\sigma,\lambda}:=L^2(K\times_{M_P\cap K}  V_\sigma)$ as unitary model for the principal series $\pi_{\sigma, \lambda}$. Note that this Hilbert model is uniform in $\lambda\in i\af_P^*$. 

Let $r_P:=\dim \af_P$.  Note that 
$|\pi_{\sigma, \lambda}|\asymp |\sigma|+ |\lambda|^2$. Hence 
for $k>{r_P\over 2}$ there exists a constant $C>0$ such that 
\begin{equation}\label{mu_c}
\int_{i(\af_P^*)^+} (1+|\pi_{\sigma, \lambda}|)^{-k} \ d\lambda
\leq \frac C{(1 +|\sigma|)^{k -\frac{r_P}2}} \, .
\end{equation}

Let 
\begin{equation} \label{phi min} \phi_{P,\sigma}=\int_{i(\af_P^*)^+} \phi_{\sigma,\lambda} \ d\lambda\end{equation}
be the spectral decomposition of $\phi_{P,\sigma}$.  Here $\phi_{\sigma, \lambda}$ are smooth $K$-invariant functions on $Y$ obtained from $G$-equivariant embeddings 
$$  \Hc_{\sigma, \lambda} \to C^\infty(Y), \ \ v_{\sigma, \lambda}\mapsto 
\phi_{\sigma,\lambda}$$
which satisfy 
$$ \|\phi_{P,\sigma}\|^2 = \int_{i (\af_P^*)^+}  \|v_{\sigma,\lambda}\|^2 \ d\lambda\, .$$

\par We recall from Theorem 3.2 of  \cite{B} specialized to $Y=G/\Gamma$  (see \cite{B}, Ex. 4.3.4)
that there exists a constant $C>0$ such that 
\begin{equation} \label{IB1}  \sup_{y\in Y} |\phi_{\sigma,\lambda}(y)  \vb_Y(y)^{1\over 2} ( 1 + |\log \vb_Y(y)|)^{-r-1}|\leq C \| v_{\sigma, \lambda}\|_{n}\qquad (\lambda\in i(\af_P^*)^+)\end{equation}
where $\|v_{\sigma, \lambda}\|_n$ refers to the $n$-th Sobolev norm of $\Hc_{\sigma, \lambda}$ with $n>{s\over 2}$ as before.
Here we are allowed to use the improved Sobolev shift (compared to $n>{\dim G\over 2}$ used in \cite{B}) 
as $\phi_{\sigma, \lambda}$ is  $K$-fixed.  For a short proof of (\ref{IB1}) in the very similar 
context with $Y$ replaced by $Z=G/H$ we refer 
to Lemma 9.4 in \cite{KS2}. 

\par It follows from (\ref{IB1}) that the integral (\ref{phi min}) is uniformly convergent on compact subsets of $Y$. Hence  we obtain from the compactness of 
$Y_H$ and Fubini's theorem
that
$$\phi_{P,\sigma}^H =\int_{i(\af_P^*)^+} \phi_{\sigma,\lambda}^H \ d\lambda\, .$$
From that we deduce that 
\begin{equation} \label{min1} \|\phi_{P,\sigma}^H\|_p\leq \int_{i(\af_P^*)^+} \|\phi_{\sigma,\lambda}^H\|_p \ d\lambda \, .\end{equation} 
Using (\ref{hypoB1}) and (\ref{hypoB2}) we obtain as in the discrete that

\begin{equation}\label{phi-P1} \|\phi_{\sigma,\lambda}^H\|_p\leq C \|\phi_{\sigma, \lambda}^H\|_{\infty, D_{d \log (1 +|\pi_{\sigma,\lambda}|)}, l}\leq C \|\phi_{\sigma, \lambda}\|_{\infty, B_{|\pi_{\sigma, \lambda}|}, l}\, .\end{equation} 
\par Now (\ref{phi-P1})  combined with (\ref{IB1})  and (\ref{S-bound})  yields that 

$$ \|\phi_{\sigma, \lambda}^H\|_p \leq C \|v_{\sigma, \lambda}\|_{s +l}  ( 1+|\pi_{\sigma,\lambda}|)^d \leq C \|v_{\sigma, \lambda}\|_{n+l+2d}\, .$$ 
Inserting this bound into (\ref{min1}) we arrive at 

\begin{eqnarray*}  \|\phi_{P,\sigma}^H\|_p &\leq&C  \int_{i(\af_P^*)^+} \|v_{\sigma, \lambda}\|_{n+l+2d}  \ d\lambda\\ 
&=&C \int_{i(\af_P^*)^+} \|v_{\sigma,\lambda}\|_{n+l+2d} (1 +|\pi_{\sigma, \lambda}|)^{k/2} ( 1+ 
|\pi_{\sigma, \lambda}|)^{-k/2}  \ d\lambda\\
&\leq&  {C\over (1 +|\sigma|)^{k\over 2}} \Big(\int_{i(\af_P^*)^+} \|v_{\sigma,\lambda}\|_{n+l+2d+{k\over 2}}^2  \ d\lambda\Big)^{1\over2}=
{C\over (1 +|\sigma|)^{{k\over 2} - \frac{r_P}2}} \|\phi_{P,\sigma}\|_{2, n+l +2d +{k\over 2}}\\ 
&\leq&  {C\over (1 +|\sigma|)^{ {k\over 2} - \frac{r_P}2}} \|\phi\|_{2, n+l +2d +{k\over 2}} 
\leq {C\over (1 +|\sigma|)^{{k\over 2}-\frac{r_P}2}}  \|\phi\|_{\infty, n+l +2d +{k\over 2}} \, .\end{eqnarray*}

From this inequality we obtain that

$$ \|\phi_P^H\|_p\leq \sum_{\sigma\in \hat M_{P,d}} \|\phi_{P,\sigma}^H\|_p
\leq C    \|\phi\|_{\infty, n+l +2d +{k\over 2}} \sum_{\sigma\in \hat M_{P,d}} {1 \over (1 +|\sigma|)^{{k\over 2}- \frac{r_P}2}} \, .$$ 
Finally for ${k\over 2}=m$ as above for the discrete spectrum, the sum 
$\sum_{\sigma\in \hat M_{P,d}} {1 \over (1 +|\sigma|)^{{k\over 2}- \frac{r_P}2}}$ is finite.
This completes the proof. 
\end{proof} 

Let $y_0=\Gamma\in Y$ be the standard base point and $\vb_Y$ be an associated volume weight. 

\begin{lemma} \label{Siegel lemma} Let $\Omega\subset G$ be a compact subset. Then there exists a constant $C>0$ such that 
$$\inf_{g\in\Omega} \vb_Y (ag\cdot y_0) \geq C a^{ 2\rho_\nf}  \qquad (a\in A^-)\, .$$ 

\end{lemma}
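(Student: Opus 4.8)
The plan is to bound $\vb_Y(ag\cdot y_0)=\vol_Y(Bag\cdot y_0)$ from below by the volume of a small ``Bruhat box'' around $\1$ whose $\oline\nf$-direction has been contracted by $\Ad(a)^{-1}$, and to observe that this contraction has Jacobian exactly $a^{2\rho_\nf}$.

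First I would fix the relatively compact neighbourhood $B$ of $\1$ defining $\vb_Y$ and choose origin-neighbourhoods $B_{\oline\nf}\subset\oline\nf$, $B_{\af\oplus\mf}\subset\af\oplus\mf$, $B_\nf\subset\nf$, where $B_{\oline\nf}$ and $B_\nf$ are of product type over the root-space decomposition (a product of balls in the spaces $\gf^{\mp\alpha}$, $\alpha\in\Sigma^+$), small enough that: (i) the map $\Phi\colon(\oline X,Z,X)\mapsto\exp(\oline X)\exp(Z)\exp(X)$ is a diffeomorphism from $B_{\oline\nf}\times B_{\af\oplus\mf}\times B_\nf$ onto an open neighbourhood $B_0$ of $\1$ (its differential at the origin is the decomposition isomorphism $\oline\nf\oplus(\af\oplus\mf)\oplus\nf=\gf$), with Jacobian, with respect to Haar measure on $G$ and Lebesgue measures on the three factors, bounded below by a constant $c_1>0$ on the compact box; (ii) $B_0\subseteq B$; and (iii) $g^{-1}B_0^{-1}B_0\,g\cap\Gamma=\{\1\}$ for every $g\in\Omega$. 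Requirement (iii) can be met because, by uniform continuity of conjugation on the compact set $\Omega$, the set $\bigcup_{g\in\Omega}g^{-1}B_0^{-1}B_0\,g$ shrinks to $\{\1\}$ as $B_0$ does, while $\Gamma$ is discrete.

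Now fix $a\in A^-$ and $g\in\Omega$. Since left translation by $a$ preserves $\vol_Y$,
$$\vb_Y(ag\cdot y_0)=\vol_Y(Bag\cdot y_0)=\vol_Y\!\big((\Ad(a)^{-1}B)\,g\cdot y_0\big).$$
As $\Ad(a)^{-1}$ is an automorphism fixing $\af\oplus\mf$ pointwise, with $\Ad(a)^{-1}\exp(B_\nf)\supseteq\exp(B_\nf)$ (because $a^{-\alpha}\ge1$ for $a\in A^-$, $\alpha\in\Sigma^+$) and $\Ad(a)^{-1}\exp(B_{\oline\nf})\subseteq\exp(B_{\oline\nf})$ (because $a^{\alpha}\le1$), we get from $B\supseteq B_0$ that
$$\Ad(a)^{-1}B\ \supseteq\ \exp\!\big(\Ad(a)^{-1}B_{\oline\nf}\big)\cdot\exp(B_{\af\oplus\mf})\cdot\exp(B_\nf).$$
The right-hand side is $\Phi$ applied to the box $\Ad(a)^{-1}B_{\oline\nf}\times B_{\af\oplus\mf}\times B_\nf$, which is contained in $B_{\oline\nf}\times B_{\af\oplus\mf}\times B_\nf$; hence it lies in $B_0$. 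By (iii) the covering map $\pi_Y\colon G\to G/\Gamma$ is injective on $B_0g$, so, using that right translation by $g$ and $\pi_Y$ both preserve the relevant measures, the change-of-variables formula gives
$$\vb_Y(ag\cdot y_0)\ \ge\ c_1\cdot\vol_{\oline\nf}\!\big(\Ad(a)^{-1}B_{\oline\nf}\big)\cdot\vol_{\af\oplus\mf}(B_{\af\oplus\mf})\cdot\vol_\nf(B_\nf).$$
Finally $\Ad(a)^{-1}$ acts on $\gf^{-\alpha}$ by the scalar $a^{\alpha}$, so $|\det\Ad(a)^{-1}|_{\oline\nf}|=\prod_{\alpha\in\Sigma^+}(a^{\alpha})^{\dim\gf^{\alpha}}=a^{2\rho_\nf}$, whence $\vol_{\oline\nf}(\Ad(a)^{-1}B_{\oline\nf})=a^{2\rho_\nf}\vol_{\oline\nf}(B_{\oline\nf})$; the other two volumes are positive constants, and we obtain $\vb_Y(ag\cdot y_0)\ge C\,a^{2\rho_\nf}$ with $C>0$ depending only on $B$, $\Omega$ and $\Gamma$.

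The one genuinely delicate point is the uniform injectivity in (iii): the compact set $\Omega$ sits on the ``inside'' of the product $Bag$, so it cannot simply be absorbed, and one must shrink the box $B_0$ so that all conjugates $g^{-1}B_0^{-1}B_0g$ with $g\in\Omega$ avoid $\Gamma\setminus\{\1\}$ simultaneously. Everything else is bookkeeping with the root-space decomposition and the change-of-variables formula. (The same mechanism, with $\oline N$ replaced by a fixed compact part of $G$, also yields the coarse left-invariance $\vb_Y(q\cdot y)\asymp\vb_Y(y)$ for $q$ in a compact set, which is used elsewhere in the paper.)
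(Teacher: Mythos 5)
Your proof is correct and follows essentially the same route as the paper's: conjugate the ball by $a^{-1}$, use that for $a\in A^-$ this contracts the $\oline\nf$-factor of a Bruhat-type box $\oline N\,(MA)\,N$ with Jacobian $a^{2\rho_\nf}$ while not shrinking the other factors, and then pass from $G$-volume to $Y$-volume using compactness of $\Omega$ and discreteness of $\Gamma$. The only (minor) difference is in that last step: you shrink the box so that the projection $G\to Y$ is injective on $B_0g$ uniformly for $g\in\Omega$, whereas the paper keeps the box $B^a g$ and instead bounds the multiplicity of the projection by the finite number $\#(\Gamma\cap B\Omega)$ and divides by it.
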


\begin{proof}  Let $B_N$ be a convex compact neighborhood of $\1$ in $N$, 
and likewise $B_A\subset A$.   Let $B_{\oline N} =\theta (B_N)\subset \oline N$.  Then $B:= B_{\oline N} 
B_A M B_N$ is a compact neighborhood of $\1$ in $G$, 
so that we may assume the volume weight is given by
$\vb_Y(y)= \vol_Y(By)$ for $y\in Y.$

Note that 
$$\Ad(a)^{-1}B_N\supset B_N\quad\text{and}\quad
\Ad(a)^{-1}B_{\oline N} \subset B_{\oline N}$$
for 
$a\in A^-$. Set $B_{\oline N}^a:= \Ad(a)^{-1}B_{\oline N}$. 
 Now for $y=ag\cdot y_0$ with $a\in A^-$ and $g\in \Omega$ we note that 
 $$\vol_Y(Bag\cdot y_0)=\vol_Y(a^{-1}Ba g \cdot y_0)\geq \vol_Y(B_{\oline N}^aMB_A B_Ng\cdot y_0)\, .$$
 Set $B^a:=B_{\oline N}^aMB_A B_N$ and observe $B^a\subset B$.  As $B\Omega$ is compact
 we thus obtain for all $a\in A^-$ and $g\in \Omega$
 $$\# (\Gamma\cap B^ag) \leq \# (\Gamma\cap B\Omega)=:f<\infty\, .$$
 
 It follows that 
 $$\vb_Y(ag\cdot y_0)\geq \vol_Y(B^ag \cdot y_0) \geq {1\over f}  \vol_G(B^ag)={1\over f} \vol_G(B^a)\, .$$
Finally,  $\vol_G(B^a)\geq C a^{2\rho_{\nf}}$ by the integral formula for the Bruhat decomposition. 
\end{proof}

\section{Proof of hypothesis B in case $\rank_\R Z=1$} \label{Sect6}

In this section we assume that $Z$ is wavefront with $\rank_\R Z =1$, i.e. $\dim \af_Z=1$
(see Remark \ref{RR1} for examples, and note in particular the cases (20) - (21), which are far from
being symmetric). 

\subsection{Basic geometry of rank one wavefront spaces}
We begin by showing that $\af_Z^-$ is a half-line. 

\begin{lemma} Let $Z$ be a wavefront real spherical space of real rank one. Then $\af_Z^-\neq \af_Z$. 
\end{lemma}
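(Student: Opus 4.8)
The claim is that for a wavefront real spherical space $Z=G/H$ of real rank one, the compression cone $\af_Z^-$ is a proper subcone of $\af_Z$, i.e. a genuine half-line rather than all of $\af_Z$. Since $\dim\af_Z=1$, proving $\af_Z^-\neq\af_Z$ is the same as exhibiting one element of $\af_Z$ on which some spherical root is strictly negative, equivalently showing the set $S$ of spherical roots is nonempty (and hence, being a single functional up to scaling in rank one, cuts $\af_Z$ into two half-lines with $\af_Z^-$ one of them). So the whole matter reduces to: \emph{the set $\mathcal M=\{\alpha+\beta\mid T_{\alpha,\beta}\neq 0\}\subset\af_Z^*$ is nonempty}, i.e. the map $T$ of \eqref{Tmap} is not identically zero.

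First I would argue by contradiction: suppose $T=0$. Then by \eqref{h as graph} we have $\hf=(\lf\cap\hf)\oplus\oline\uf$. In particular $\oline\uf\subset\hf$. Now I would use the wavefront hypothesis in the sharp form \eqref{wf}, namely $\af^-+\af_H=\af_Z^-$. If $T=0$ then $\mathcal M=\emptyset$, so $S=\emptyset$ and $\af_Z^-=\{Y\in\af\mid(\forall\sigma\in S)\ \sigma(Y)\leq 0\}=\af$, hence $\af_Z^-=\af_Z$. Conversely (this is the direction I actually need) if $\af_Z^-=\af_Z$ then in particular $\af_Z^-$ is a linear subspace; but then by \eqref{wf} the image of $\af^-$ in $\af_Z$ generates a linear subspace, forcing $\af^-$ itself to project onto all of $\af_Z$ including directions $Y$ with $\rho_\nf$-value of both signs. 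The cleanest route, though, is the structural one: $\af_Z^-=\af_Z$ together with the polar decomposition \eqref{polar} would say $Z=\Omega A_Z \W\cdot z_0$ with $A_Z$ acting through a group (not just a cone), which would make $\af_H^\perp$-directions act by genuine automorphisms in both time directions; I would then invoke the volume bound \eqref{vbound}, $C_1 a^{-2\rho}\leq\vb(\omega a\sw\cdot z_0)\leq C_2 a^{-2\rho}$, valid for all $a\in A_Z^-=A_Z$, which combined with the analogous statement for $a^{-1}$ forces $\rho|_{\af_Z}=0$.

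Granting $\rho|_{\af_Z}=0$, I would derive a contradiction with the rank-one hypothesis and real sphericality. Recall $\rho=\tfrac12\operatorname{tr}\ad_\uf$; if $\rho$ vanishes on $\af_Z=\af/\af_H$ then, picking any $Y\in\af$ with nonzero image in $\af_Z$, we get $\sum_{\beta\in\Sigma_\uf}(\dim\gf^\beta)\beta(Y)=0$. On the other hand $Y$ may be chosen in $\af^-$ (by \eqref{wf}, representatives of $\af_Z^-$ can be taken in $\af^-$) so every $\beta(Y)\leq 0$, forcing $\beta(Y)=0$ for all $\beta\in\Sigma_\uf$, i.e. $\af_Z$ is killed by all roots of $\uf$. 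But $Q=LU$ with $L_{\rm n}\subset Q\cap H\subset L$ is the unique parabolic above $P$ with $QH=PH$, and $\af_H\subset\lf$; if $\Sigma_\uf$ annihilates a complement to $\af_H$ in $\af$ that complement would lie in the center of $\lf$ modulo $\af_H$, contradicting that $\dim A_Z=\rank_\R Z=1$ is a genuine (nonzero) invariant — i.e. $\af_H\subsetneq\af$ and the complementary direction must be detected by $\uf$ since $U\neq\{1\}$ (as $P\subsetneq G$, $Z$ being noncompact of positive rank).

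\textbf{Main obstacle.} The delicate point is not any single inequality but pinning down precisely which earlier structural fact rules out $\rho|_{\af_Z}=0$ cleanly; I expect the intended argument is shorter, probably just: $\af^-\subset\af_Z^-$ always, and $\af^-$ is a proper cone in $\af$ with $\af_Z=\af/\af_H$ of dimension one, so if additionally $\af_Z^-=\af_Z$ then by wavefront $\af^-+\af_H=\af$, which is impossible because $\af^-$ is a proper (salient) cone and adding the proper subspace $\af_H$ to it cannot fill $\af$ unless $\af^-$ already surjects onto $\af/\af_H$ as a cone — but the image of the salient cone $\af^-$ in the one-dimensional $\af_Z$ is a half-line, not all of $\af_Z$, precisely because $\rho_\nf$ (which is nonzero on $\af^-\setminus\{0\}$, as $\Sigma^+\neq\emptyset$) does not vanish on $\af_Z$ (since $Z$ is unimodular wavefront of positive rank, so $\uf\neq 0$ yet $\rho$ may still pair nontrivially with the $\af_Z$-direction). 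I would write the proof along exactly this line: the image of $\af^-$ in $\af_Z$ is a salient cone in a one-dimensional space, hence a proper half-line, and by \eqref{wf} it equals $\af_Z^-$, giving $\af_Z^-\neq\af_Z$.
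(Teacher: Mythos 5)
Your proposal takes a genuinely different route from the paper, and the core idea (forcing $\rho$ to vanish on $\af$ and extracting a contradiction from $\uf\neq 0$) can in fact be made to work, but as written it has two concrete gaps.

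First, your "cleanest route" argues with $\rho_\nf=\tfrac12\operatorname{tr}\ad_\nf$. That is the wrong functional: it is $\rho=\tfrac12\operatorname{tr}\ad_\uf$, not $\rho_\nf$, which is known to vanish on $\af_H$ (from unimodularity, Lemma 4.2 of [KKSS2]), and hence which descends to $\af_Z$. There is no reason for $\rho_\nf$ to vanish on $\af_H$, since $\rho_\nf=\rho+\rho_{\nf\cap\lf}$ and the $\lf$-part need not be trivial on $\af_H$. So the assertion that "the image of the salient cone $\af^-$ in the one-dimensional $\af_Z$ is a half-line ... precisely because $\rho_\nf$ does not vanish on $\af_Z$" does not stand; it should be replaced by: wavefront plus $\af_Z^-=\af_Z$ gives $\af^-+\af_H=\af$, whence $\rho\leq 0$ on all of $\af$ (using $\rho\leq 0$ on $\af^-$ and $\rho|_{\af_H}=0$), hence $\rho=0$ on $\af$ by linearity.

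Second, having reached $\rho|_\af=0$ you need $\uf\neq 0$ to produce the contradiction, but the parenthetical justification "$U\neq\{1\}$ (as $P\subsetneq G$, $Z$ being noncompact of positive rank)" conflates $P$ and $Q$: $U$ is the unipotent radical of $Q$, not of $P$, and $P\subsetneq G$ does not imply $Q\subsetneq G$. The correct completion is: $\rho=0$ on $\af$ forces $\Sigma_\uf=\emptyset$, hence $\uf=0$ and $Q=L=G$; then $\lf_{\rm n}\subset\qf\cap\hf=\hf$ gives $\gf=\lf_{\rm n}\subset\hf$ (under the standing assumption that $G$ is semisimple with no compact factors), so $Z$ is a point and $\rank_\R Z=0$, contradicting $\rank_\R Z=1$. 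The excursion through the volume bound (\ref{vbound}) and the "$T=0$" thread do not contribute and can be dropped. For comparison, the paper's own proof avoids $\rho$ entirely: from $\af_Z^-=\af_Z$ it deduces that $\af_Z$ is the edge of the compression cone and therefore normalizes $H$, forms the overgroup $\hat H=HA_Z$ with $G/\hat H$ of real rank zero, and then invokes \cite{KK}, Cor.~8.5, to conclude $\hat H$ contains a conjugate of $AN$, which is incompatible with the wavefront property. The paper's argument is shorter because it leans on the structure theory already developed in \cite{KK}; your $\rho$-argument is more self-contained but requires the two repairs above.
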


\begin{proof}  We argue by contradiction and assume that $\af_Z^-=\af_Z$. This implies $\af_Z$ equals
the edge of the compression cone and thus normalizes $H$.  The overgroup $\hat H := HA_Z$ is 
real spherical and the associated real spherical space $\hat Z=G/\hat H$ has real rank zero. 
But then \cite{KK}, Cor.~8.5,  implies that $\hat H$ contains a conjugate of $AN$ and this excludes
$H$ from being wavefront. \end{proof} 

Set $\af_Z^+:= -\af_Z^-$ and note that $\af_Z=\af_Z^- \cup \af_Z^+$ with $\af_Z^-\cap \af_Z^+=\{0\}$.

\par   The first important  feature 
of rank one spaces  is a simplified polar decomposition, namely:  there exists a compact subset 
$\Omega_Z\subset Z$ such that 

\begin{equation} \label{polar r1} Z = K A_Z^- \W \cdot z_0 \cup \Omega_Z\, .\end{equation}
The reason for that is that the  standard  compactification of $Z$ has only one $G$-orbit in the boundary
see \cite{KK}, Th. 13.7.

In particular we obtain for any non-negative measurable function $f$ on $Z$ that 

$$ \int_Z  f(z) \ dz   \leq  \int_{\Omega_Z} f(z)  \  dz + \int_{KA_Z^-\W\cdot z_0}  f (z) \ dz \, .$$  
We obtain from Lemma \ref{lemma int-ineq} (\ref{intineq2}) that 

$$   \int_{KA_Z^-\W\cdot z_0}  f (z) \ dz \asymp \sum_{\sw\in \W} \int_K \int_{A_Z^-} f(k a\sw\cdot z_0)  a^{-2\rho}    \ dk\  da $$
where $\asymp$ signifies that the ratio of the left hand side by the right hand side 
is bounded from above and below by positive constants independent of $f$.

In particular, if in addition $f$ is $K$-invariant then we obtain that 

\begin{equation} \label{Int3} \| f\|_1  \asymp \|f\|_{1, \Omega_Z}  +  \sum_{\sw\in W}  \int_{A_Z^-}  f (a\sw \cdot z_0)a^{-2\rho}  \  da\end{equation} 
As for proving Hypothesis B it is then not serious to assume that $Z=KA_Z^- \W \cdot z_0$. 
\par 
We now come to the main technical tool for the verification of Hypothesis B. 

\subsection{The constant term approximation}

Our concern then is with matrix coefficients $f=m_{v,\eta}$ for a unitarizable irreducible 
Harish-Chandra module $V$ and $v \in V^K$ a $K$-fixed vector.  

\par In general, if $V$ is Harish-Chandra module with smooth moderate growth completion $V^\infty$, 
then we refer to $(V,\eta)$ as a {\it spherical pair} provided that $\eta: V^\infty \to \C$ is a continuous 
$H$-invariant functional.

\begin{theorem}[Constant term approximation]\label{thm const}  Let $Z=G/H$ be a 
 wave-front real spherical space of real rank one. There exist constants $C, c_0>0$ with the following properties.
 
 Let $(V,\eta)$ be a  spherical pair with $V$ irreducible,
 which satisfies an priori-bound 
  \begin{equation} \label{a priori}  |m_{v,\eta}  (\omega a \sw ) | \leq  a^{r \rho}  p(v)    
 \qquad   (v \in V^\infty; \omega\in \Omega, a\in A_Z^-, \sw\in \W) \end{equation} 
for some $r>0$ and a $G$-continuous norm $p$.

Then there exists a number $\mu \in \C$ such that the following holds.
Let  $$\{ \lambda_1, \lambda_2\}= \{ \rho\pm\mu\rho\}\subset \af_{Z,\C}^*.$$
Then for all $v\in (V^\infty)^ M$ and $\sw\in\W$  
there exist $c_1(v,\sw),c_2(v,\sw) \in \C$ such that 
\begin{equation} \label{c-approx} | m_{v,\eta} ( a \sw \cdot z_0) -  {\bf const}_\sw(v) (a) |\leq 
C  a^{(r+ c_0)\rho} p_8(v)  \qquad  (a \in A_Z^-, \sw\in \W)\,  \end{equation} 
where
 \begin{equation} \label{def const} {\bf  const}_\sw(v)(a):=  
  \begin{cases} 
c_1 (v,\sw) a^{\lambda_1} + c_2(v,\sw)  a^{\lambda_2}  
&  \lambda_1, \lambda_2 \neq \rho\\ 
a^{\rho}(   c_1(v,\sw)  +  c_2(v,\sw)  \rho(\log a))  &  \lambda_1=\lambda_2=\rho
  \end{cases} 
      \qquad  (a \in A_Z^-).
 \end{equation}
Moreover, let $I_0:=\{i=1,2\mid c_i(v,\sw)\neq 0 \text{ for some }v,\sw\}$. Then
\begin{equation} \label{lambda ineq with r}
\re \lambda_i(X) \geq r \rho(X),\quad (X\in {\af_Z^+}, i\in I_0),
\end{equation}
and   
\begin{equation} \label{c-approx improved}  |{\bf const}_\sw(v)(a)|  \leq  C a^{r'\rho}  p_8(v)    
\qquad   (v \in V^M;  a\in A_Z^-, \sw\in \W) \, \end{equation}
for every $r\leq r'\leq r+c_0$
such that $\re \lambda_i(X) \geq r' \rho(X)$ for all $X\in \af_Z^+$, $i\in I_0$.

Finally, if $V$ is unitarizable, then $\mu \in \R \cup i\R$. 
\end{theorem}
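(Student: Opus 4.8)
The plan is to use the rank-one hypothesis to reduce the eigenfunction equation satisfied by $m_{v,\eta}$ to a second-order ordinary differential equation along the half-line $A_Z^-$ and to integrate it asymptotically. Since $V$ is irreducible, the Casimir element $\Cc\in\Zc(\gf)$ acts on $V$ by the scalar $\chi_V$, so $f:=m_{v,\eta}$ is an eigenfunction, with eigenvalue $\chi_V$, of the invariant differential operator induced by $\Cc$. By the rank-one polar decomposition (\ref{polar r1}) it suffices to control $f$ on $A_Z^-\sw\cdot z_0$ for each fixed $\sw\in\W$; write $\phi_v(a):=m_{v,\eta}(a\sw\cdot z_0)$ and parametrize $A_Z^-$ by $a=a_t:=\exp(-tX_0)$, $t\ge 0$, with $X_0\in\af_Z^+$ a fixed generator. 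First I would compute the radial part of $\Cc$ along $A_Z$ at $\sw\cdot z_0$, using the graph description $\hf=(\lf\cap\hf)\oplus\mathcal{G}(T)$ of (\ref{Tmap})--(\ref{h as graph}): pushing the $\uf$- and $\oline\uf$-directions across $\hf$ produces only coefficients $a^{\alpha}$ with $\alpha\in\Sigma_\uf$, which decay like $a^{\sigma}$ on $A_Z^-$ ($\sigma$ the single spherical root), while $M$-invariance of $v$ removes the $\mf$-contributions. The upshot is that $\phi_v$ — together with finitely many of its $\U(\gf)$-translates, which for $v\in V^K$ degenerates to the scalar radial Casimir — satisfies a second-order system with asymptotically constant coefficients and an exponentially small remainder of size $O(a^{\sigma})=O(a^{c_0\rho})$, where $c_0:=\sigma(X_0)/\rho(X_0)>0$ depends only on the root data of $Z$. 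After conjugating by $a^{\rho}$ the limiting operator has no first-order term, so its two indicial exponents are symmetric about $\rho$: there is $\mu\in\C$, determined by $\chi_V$ through a relation $\mu^{2}=\alpha\chi_V+\beta$ with real $\alpha,\beta$, such that the exponents are $\{\lambda_1,\lambda_2\}=\{\rho+\mu\rho,\ \rho-\mu\rho\}$.

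The second step is the asymptotic integration of this system, which is the analytic heart of the argument. Here I would invoke a Levinson-type estimate (variation of parameters recast as a contraction on an exponentially weighted space): the solution space is spanned by solutions asymptotic to $a^{\lambda_1}$ and $a^{\lambda_2}$ when $\lambda_1\ne\lambda_2$, and to $a^{\rho}$ and $a^{\rho}\rho(\log a)$ when $\lambda_1=\lambda_2=\rho$, each with relative correction $O(a^{c_0\rho})$. Reading off the $\phi_v$-component and collecting the coefficients into numbers $c_1(v,\sw),c_2(v,\sw)\in\C$, linear in $v$, one obtains
\[
\bigl|\,\phi_v(a)-{\bf const}_\sw(v)(a)\,\bigr|\ \le\ C\,\Bigl(\max_{i}a^{\re\lambda_i}\Bigr)\,a^{c_0\rho}\,p_8(v)\qquad(a\in A_Z^-),
\]
with ${\bf const}_\sw(v)$ the exponential polynomial of (\ref{def const}); the Sobolev order $8$ absorbs the finitely many translates $\pi(u)v$, $u\in\U(\gf)$ of bounded degree, that enter the system, each being estimated via (\ref{a priori}) applied to $\pi(u)v$ together with $p(\pi(u)v)\le C\,p_8(v)$.

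It remains to feed in the a priori bound. Applying (\ref{a priori}) to $\phi_v$ itself and comparing with the two-term asymptotics forces $c_i(v,\sw)=0$ unless $a^{\re\lambda_i}\le C\,a^{r\rho}$ on $A_Z^-$, i.e.\ unless $\re\lambda_i(X)\ge r\rho(X)$ for all $X\in\af_Z^+$; this is (\ref{lambda ineq with r}). For the surviving indices this same inequality gives $a^{\re\lambda_i}\le C\,a^{r\rho}$ on $A_Z^-$, so the displayed error is $O(a^{(r+c_0)\rho}p_8(v))$, which is (\ref{c-approx}). Estimating each surviving $c_i(v,\sw)$ by $p_8(v)$ via the same a priori bound and again comparing exponents yields (\ref{c-approx improved}) for every admissible $r'$ (in the degenerate logarithmic case one uses that (\ref{a priori}) already forces $r<1$). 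Finally, if $V$ is unitarizable then the Casimir acts by a real scalar, $\chi_V\in\R$, hence $\mu^{2}=\alpha\chi_V+\beta\in\R$, and therefore $\mu\in\R\cup i\R$.

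I expect the main obstacle to lie in the first step: identifying precisely the radial part of $\Cc$ on the rank-one real spherical space near its unique boundary $G$-orbit (cf.\ \cite{KK}, Th.~13.7), checking that its coefficients are asymptotically constant with an exponentially small remainder controlled by finitely many $\U(\gf)$-translates of $v$ bounded in a fixed Sobolev norm, and keeping the constants $C,c_0$ uniform in $(V,\eta,\sw)$ (and locally uniform in $r$). This is exactly where the rank-one structure — a one-dimensional compression cone and a single spherical root — lets one trim the general apparatus of \cite{DKS} to an elementary ODE analysis; the Levinson-type integration in the second step is then classical once the system has been put in the above normal form.
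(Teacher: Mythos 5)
Your proposal follows essentially the same route as the paper: reduce via the rank-one structure and the graph description of $\hf$ to a second-order ODE for $\phi_v$ along $A_Z^-$ with constant principal part determined by the Casimir eigenvalue and an exponentially small remainder controlled by the a priori bound applied to finitely many $\U(\gf)$-translates of $v$, then integrate by variation of parameters (your Levinson-type contraction is the same classical device as the paper's Duhamel formula with spectral projections), and finally compare exponents with the a priori bound to kill the forbidden coefficients and obtain (\ref{lambda ineq with r}), (\ref{c-approx improved}) and the reality statement for $\mu$. The only place you are vaguer than the paper is the provenance of the index $8$ in $p_8$, which the paper extracts explicitly from the Gelfand--Shilov bound, the polynomial bound $\|\sP\|\leq C(1+\|\sA\|)^2$ of its matrix lemma, and the $K$-squeeze Lemma \ref{K-squeeze} converting powers of $(1+|\chi_V|)$ into Sobolev shifts; but your remark that the Sobolev order absorbs these eigenvalue-dependent constants is the right idea.
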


\begin{proof}  In what follows, the elements of $\W$ can be dealt with on an equal 
footing and for our simplified discussion here we shall assume $\W=\{\1\}$. 
Furthermore, since all  ideas are contained in the example of $\gf=\sl(2,\R)$
and $\hf$ one-dimensional, we restrict ourselves here to that case. However, in Remark \ref{general rank one}
below we indicate the necessary changes for the general rank one case.

Let 
$$ X = \begin{pmatrix} 1 & 0 \\ 0 & -1\end{pmatrix},  \quad E = \begin{pmatrix} 0 & 1 \\ 0 & 0 \end{pmatrix},  \quad 
F = \begin{pmatrix} 0 & 0 \\ 1 & 0\end{pmatrix}\, .$$
We assume that $\af_Z=\R X$ and $\pf=\Span\{X,E\}$. Then $X \in \af_Z^+$.
Let 
\begin{equation}\label{a1} \phi (t) = m_{v, \eta}(\exp(-tX))\end{equation} 
for $t \in \R$.  

\par The Casimir element of $\gf$ is given by 
$$ \Cc = {1\over 2} X^2 + EF + FE = {1\over 2} X^2 + X + 2 FE\, . $$
In general for $Y\in \gf$ and $\psi \in C^\infty(Z)$ we set 
$(L_Y \psi) ( z) ={d\over ds}\Big|_{s=0}  \psi (\exp(-sY)z)$ 
for $ z\in Z$.

\par Let now $c\in \C$ be such that $2\Cc$ acts as $c \operatorname{id}_V$ on $V$.   
Set 

\begin{equation} \label{a2} \Phi(t)=\begin{pmatrix} \phi (t)\\ \phi '(t) \end{pmatrix} \, . \end{equation}
Now note with $f=m_{v,\eta}$ that 
$$ \phi''(t) = (L_{X^2} f) (\exp(-tX)) = (L_{2 \Cc -2X - 4FE} f )(\exp(-tX)) = c \phi (t) - 2 \phi'(t)  + r(t)$$
with 
$$r(t) = - 4(L_{FE} f) (\exp(-tX))\,  .$$
For any smooth function $\psi $ on $Z$, viewed as a
 right $H$-invariant function on $G$   we have for $a\in A_Z^-$ that 
\begin{equation} \label{a3}(L_F \psi)  (a) = - a^\alpha  (R_F \psi) (a)\end{equation}
with  $\alpha$ the positive root and $(R_F \psi)(a) = {d\over dt}_{t=0} \psi (a \exp(tF)) $. 
Let now $\hf = \R Y $  and note that $\gf=\hf +\pf$ implies that $Y$ can be normalized such that 
$F = Y +  d_1 X + d_2 E$ with $d_1, d_2\in\R$.    Thus we get 
\begin{equation} \label{a4} (L_F \psi) (a)  = d_1 a^{\alpha}  (L _{X} \psi) (a) + d_2 a^{2\alpha} (L_{E}\psi)(a)\, .\end{equation}
We let now $\psi=L_Ef$
and thus   obtain from    (\ref{a priori})  that 
\begin{equation}  \label{remainder r(t)}  |r(t)|\leq  C e^{- t (r+c_0)}  p_2(v) ,\qquad t\ge0,\end{equation}
for a universal constant $C\geq 0$ and $c_0>0$ (specifically for $\gf=\sl(2,\R)$ we have 
$c_0 = 2$  if $d_1\neq 0$ and otherwise $c_0=4$,
but in general it can be different).
 
Set $$R(t):= \begin{pmatrix} 0 \\ r(t)\end{pmatrix}.$$ 
Then our discussion shows that $\Phi$ satisfies the first order differential equation 

\begin{equation} \label{ODE}  \Phi' (t) =  \sA \Phi(t)  + R(t)\, , \end{equation} 
where 
$$ \sA=\begin{pmatrix}   0 & 1 \\ c & -2\end{pmatrix}$$

The characteristic polynomial of $\sA$ is given by $\lambda^2 + 2\lambda -c $ and thus has solutions
\begin{equation} \label{spectrum} \lambda_{\pm}=  -1 \pm \mu \end{equation}  
where 
\begin{equation} \label{mu value} \mu =\sqrt{ 1+c}\, .\end{equation} 
The general solution formula for (\ref{ODE}) then is 

$$ \Phi(t) =  e^{t \sA} \Phi(0)+  e^{t\sA} \int_0^t e^{-s\sA} R(s) \ ds\, .$$  
  
Note that $\sA$ is semi-simple if and only if $\mu\neq0$, i.e. $c\neq  -1$.  
For $\lambda\in \Spec \sA
=\{ \lambda_-, \lambda_+\}$ we write $E(\lambda)\subset \C^2$ for the corresponding  generalized eigenspace.   Let 
$0 <\delta<c_0 +r$ and write 
$$\sP: \C^2\to \bigoplus_{\lambda\in \Spec \sA\atop 
-\re \lambda \leq r + c_0-  \delta}  E(\lambda)$$
for the projection along the complementary  generalized eigenspaces.   
Next we note that 

\begin{eqnarray*}  \int_0^t  e^{-s\sA}  R(s) \ ds & =&   \int_0^t  e^{-s\sA}   \sP  R(s) \ ds+  \int_0^t  e^{-s\sA} (\1 - \sP)  R(s) \ ds\\ 
& =&   \int_0^\infty  e^{-s\sA}   \sP  R(s) \ ds  -   \int_t ^\infty  e^{-s\sA}   \sP  R(s) \ ds
+ \int_0^t  e^{-sA} (\1 - \sP)  R(s) \ ds \end{eqnarray*}  
 in which convergence is ensured by the definition of $\sP$, as $\delta>0$.

Set

\begin{eqnarray*}
I_1(t) & = & e^{t \sA} \int_t^\infty  e^{-s \sA}   \sP R(s) \ ds = 
\int_0^\infty  e^{ -s \sA} \sP R (s+t) \ ds \\ 
I_2(t) & = & e^{t \sA} \int_0^t  e^{-s \sA}  (\1 -  \sP) R(s) \ ds  = \int_0^t e^{ s \sA} ( \1 - \sP) R(t-s) \ ds 
\end{eqnarray*} 
This gives us 

\begin{equation}\label{I1I2}  \Phi(t) = e^{t \sA} u  - I_1(t) + I_2(t)\end{equation} 
for $u\in \C^2$  
given by 
\begin{equation} \label{u-eq} u = \Phi(0) + \int_0^\infty   e^{-s\sA} \sP R(s) \ ds\, .\end{equation} 
Next we estimate $I_1$ and $I_2$ for $t\geq 0$.  
For that we first recall the Gelfand-Shilov estimate for $\| e^{t \sA}\|$ for a complex $N\times N$-matrix 
with $\sigma:= \max \{ \re \lambda\mid \lambda \in \Spec \sA\}$:

$$ \| e^{t\sA}\| \leq e^{\sigma t} ( 1 + t \| \sA\| + \ldots +  { t^{N-1} \over (N-1)!}  \| \sA\|^{N-1})\qquad (t\geq 0)\, .$$
and in particular for $N=2$: 
$$ \| e^{t\sA}\| \leq e^{\sigma t} ( 1 + t \| \sA\|)\qquad(t\geq 0)\, . $$

Then,

\begin{eqnarray*}  \| I_1(t)\| & \leq &  \| \sP \|   \int_0^\infty e^{ s( r + c_0 -\delta)}   (1+ s \|\sA\|) \| R(s+t)\|  \ ds \\ 
& \leq &  C \| \sP \|    p_2(v)  \int_0^\infty e^{ s( r + c_0 -\delta)}  ( 1 + s \| A\| )    e^{ - (r + c_0) (s+ t)}   \ ds \\ 
& \leq &  {C \over \delta^2}  \|\sP\| ( 1 + \| \sA\|)    p_2(v) e^{ - (r + c_0) t} \\
& \leq &  {C \over \delta^2} \|\sP\|  p_4(v) e^{ - (r + c_0) t} \, . \end{eqnarray*} 
For the last line we used the fact, obtained from (\ref{Sobinf}),
that 
$$ (1 +\|\sA\|) p_2(v) \leq C (1+|c|)  p_2(v)\leq C p_4(v) \qquad (v\in V^\infty)\, .$$  
Further note that $C$ does not depend on $\delta$ as $\delta$ was bounded from above by $c_0 +r$.

 Similarly 
   \begin{eqnarray*}  \| I_2(t)\| & \leq &  \|\1- \sP \|   \int_0^t e^{ s ( -r - c_0 + \delta)}   (1+ s \|\sA\|) \| R(t-s)\|  \ ds \\ 
& \leq &  C \| \1 -\sP \|    p_2(v) e^{- (r + c_0)t}   \int_0^t  e^{ s\delta}  ( 1 + s \| A\| )      \ ds \\ 
& \leq &  {C \over \delta^2}  \|\sP\| ( 1 + \| \sA\|)    p_2(v) t e^{ - (r + c_0 -\delta) t}   \\
& \leq &  {C \over \delta^2}  \|\sP\|  p_4(v) (1+ t) e^{ - (r + c_0-\delta) t} \, . \end{eqnarray*} 

 With the Lemma \ref{matrix lemma} from below we can find a value
 $\frac {c_0}4\leq \delta \leq {c_0\over 2}$ 
 and a constant $C>0$ such that $\|\sP\| \leq C (1+\|\sA\|)^2$. This 
 yields then 
 \begin{equation} \label{I1I22} \| I_1 (t) \| + \| I_2(t)\| \leq     C e^{ - (r +{ c_0\over 2})  t}  p_8(v) 
 \, .\end{equation}
 
We now define ${\bf  const}_\sw(v)(\exp(-tX))$ to be the first coordinate of $e^{tA}u$.
By expanding $u=c_+ u_+  + c_- u_-$ into generalized eigenvectors of $\sA$ 
to eigenvalues $\lambda_{\pm}$ we obtain (\ref{def const}) for $\lambda_i=-\lambda_\pm\rho$.
Furthermore (\ref{c-approx}) follows from (\ref{I1I2}) and (\ref{I1I22}).

{}From (\ref{a priori}) and (\ref{c-approx}) it follows  that
\begin{equation*} |{\bf const}_\sw(v)(a)|  \leq  C a^{r\rho}  p_8(v)    
\qquad   (v \in V^M;  a\in A_Z^-, \sw\in \W) \, .\end{equation*}
Hence a coefficient $c_\pm$ vanishes provided that $\re \lambda_\pm > -r$, as stated in
(\ref{lambda ineq with r}).

{}From (\ref{u-eq}) we have $u=\Phi(0)+I_1(0)$ and hence $\|u\|\leq Cp_8(v)$.
If $\re \lambda_\pm \leq -r'$ this implies 
\begin{equation*}\label{ueq2} \|e^{t\sA}u\|\leq C e^{-tr'} p_8(v)\, \end{equation*} 
and hence  (\ref{c-approx improved}). 

Finally,  we remark that  if $V$ is unitary then the Casimir eigenvalue $c$ 
is real, and thus the final assertion is 
a consequence of (\ref{mu value}).
 \end{proof}

\begin{rmk}  \label{general rank one} The alert reader might ask where the assumption $v\in V^M$ entered the proof. For a 
general rank one space with parabolic $\qf=\lf +\uf$ we recall the shape of the Casimir operator $\Cc$.
For every root space $\gf^\alpha\subset\uf$ we let select a basis $(Y_\alpha^j)_{1\leq j \leq m_\alpha}$ 
with $m_\alpha= \dim \gf^\alpha$.  Then we choose $Y_{-\alpha}^j \in \gf^{-\alpha}$ such that 
$\kappa(Y_\alpha^j, Y_{-\alpha}^k)=\delta_{jk}$ with $\kappa$ the Cartan-Killing form. 
Set $\Cc_\uf:=\sum_{\alpha\in\Sigma(\af, \uf+\oline\uf)}  Y_{\alpha}^j Y_{-\alpha}^j$ and note that 
$$\Cc_\uf= \underbrace{2 \sum_{\alpha\in \Sigma(\gf,\uf)\atop
1\leq j\leq m_\alpha} Y_{-\alpha}^j Y_\alpha^j}_{=:\Cc_\uf^-}  + \underbrace{\sum_{\alpha\in \Sigma(\gf,\uf)\atop
1\leq j\leq m_\alpha} [Y_\alpha^j, Y_{-\alpha}^j] }_{=:X_{2\rho}\in\af}$$
We note that $2\rho= \kappa (X_{2\rho}, \cdot)$
as a functional on $\af$.  The fact that $Z$ is unimodular then implies that $X_{2\rho}\in \af_H^\perp$
by \cite{KKSS2}, Lemma 4.2.  Then 
$$\Cc=\Cc_{\lf}+\Cc_\uf$$ 
with $\Cc_{\lf}$ a multiple of the Casimir of $\lf$.  Now $\lf= \lf_{\rm n} \oplus  \mf_\lf \oplus (\zf(\lf)\cap \af)$
with $\mf_\lf<\mf$ and $\lf_{\rm n}<\lf\cap \hf$ by the local structure theorem.  In particular if $f$ is a left $M$-invariant function on $Z=G/H$ and $a\in A_Z$, then we have 

$$L_\Cc f  (a)=  L_{\Cc_{\zf(\lf)+\af} +\Cc_\uf}f (a) = L_{{1\over 2} (X_{2\rho})^2 + X_{2\rho} + \Cc_\uf^-} f(a) $$
and the analogy to the $\sl(2)$-case becomes apparent:  we define $\phi(t)$ as in (\ref{a1}) with $X=X_{2\rho}$
and obtain $\Phi$ as in (\ref{a2}).  The function $\Phi$ satisfies the ODE (\ref{ODE}) with $r(t)=-2 (L_{\Cc_\uf^-} 
f)(\exp(-tX_{2\rho})$. Further,  given the shape of $\Cc_\uf^-$ we can proceed as in (\ref{a3})-(\ref{a4}) to arrive at the 
remainder term estimate (\ref{remainder r(t)}). The rest of the proof translates verbatim.

\end{rmk}

\begin{lemma} \label{matrix lemma}Let $0<\nu\leq 1$, $N\in \N$ and $\sA\in \operatorname{Mat}_N(\C)$ with $\Spec (\sA)=\{ \lambda_1,\ldots, \lambda_r\}$
such that $\re \lambda_1\leq \ldots\leq \re \lambda_r$.  For every $1\leq j\leq r$ let $V_j\subset \C^n$ be the generalized 
eigenspace of $\sA$ associated to the eigenvalue $\lambda_j$.  For every $1\leq k\leq r$ we let $E_k=\bigoplus_{j=1}^k  V_j$
and $\sP_k : \C^N \to E_k$ be the projection along $\bigoplus_{j=k+1}^r V_j$.  Suppose for some 
$1\leq k\leq r-1$ that $\re \lambda_{k+1}- \re \lambda_k\geq \nu$.  
Then there exists a constant $C=C(\nu, N)>0$ such that 
$$\|\sP_k\|\leq C  (\|\sA\|+1)^N\, .$$
\end{lemma}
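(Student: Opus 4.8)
The plan is to represent $\sP_k$ as a Riesz (spectral) contour integral and then bound the resolvent crudely in terms of $\|\sA\|$, the only subtlety being the choice of contour.

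First I would fix a separating vertical line. Since every eigenvalue satisfies $|\lambda_j|\le\|\sA\|$ and, by hypothesis, $\re\lambda_{k+1}-\re\lambda_k\ge\nu$, set $s:=\re\lambda_k+\tfrac{\nu}{2}$ and $R:=\|\sA\|+1$. Then $\re\lambda_j\le s-\tfrac{\nu}{2}$ for $j\le k$, $\re\lambda_j\ge s+\tfrac{\nu}{2}$ for $j>k$, and all eigenvalues lie in the open disk $\{|\zeta|<2R\}$. Let $\Omega_k:=\{\zeta\in\C:|\zeta|<2R,\ \re\zeta<s\}$ and let $\gamma:=\partial\Omega_k$ be positively oriented, so $\gamma$ consists of a vertical chord and a circular arc. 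The crucial elementary point is that $\gamma$ encloses exactly $\lambda_1,\dots,\lambda_k$ and stays at distance $\ge\tfrac{\nu}{2}$ from $\Spec(\sA)$: on the chord the distance to any $\lambda_j$ is $|s-\re\lambda_j|\ge\tfrac{\nu}{2}$, and on the arc it is $\ge 2R-\|\sA\|=\|\sA\|+2\ge 1\ge\tfrac{\nu}{2}$, where $\nu\le1$ is used. Hence, by the holomorphic functional calculus, $\sP_k=\frac{1}{2\pi i}\oint_\gamma(\zeta I-\sA)^{-1}\,d\zeta$.

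Next I would estimate $\|(\zeta I-\sA)^{-1}\|$ on $\gamma$ via Cramer's rule $(\zeta I-\sA)^{-1}=\det(\zeta I-\sA)^{-1}\operatorname{adj}(\zeta I-\sA)$. For the denominator, $|\det(\zeta I-\sA)|=\prod_j|\zeta-\lambda_j|^{m_j}\ge(\tfrac{\nu}{2})^N$ on $\gamma$, where $m_j$ is the algebraic multiplicity and $\sum_j m_j=N$. For the numerator, each entry of $\operatorname{adj}(\zeta I-\sA)$ is a signed $(N-1)\times(N-1)$ minor of $\zeta I-\sA$, hence bounded by $(N-1)!\,(|\zeta|+\|\sA\|)^{N-1}$; since $|\zeta|\le 2R$ on $\gamma$ we have $|\zeta|+\|\sA\|\le 3(\|\sA\|+1)$, so $\|\operatorname{adj}(\zeta I-\sA)\|\le N!\,3^{\,N-1}(\|\sA\|+1)^{N-1}$. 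Combining, $\sup_{\zeta\in\gamma}\|(\zeta I-\sA)^{-1}\|\le C_1(\nu,N)(\|\sA\|+1)^{N-1}$.

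Finally I would bound $\operatorname{length}(\gamma)\le 4R+2\pi(2R)\le(4+4\pi)(\|\sA\|+1)$ and conclude, by the standard length-times-sup estimate for contour integrals, that
$$\|\sP_k\|\le\frac{1}{2\pi}\operatorname{length}(\gamma)\cdot\sup_{\zeta\in\gamma}\|(\zeta I-\sA)^{-1}\|\le C(\nu,N)(\|\sA\|+1)^N,$$
which is the assertion. I do not anticipate a real obstacle; the one point needing care is ensuring the distance from $\gamma$ to $\Spec(\sA)$ is controlled purely by $\nu$ and not by the (possibly tiny) gaps among $\lambda_1,\dots,\lambda_k$ themselves, and the shift by $\nu/2$ together with the radius $2R$ does exactly that. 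A cosmetic alternative avoiding contour integrals is to write $\sP_k=p(\sA)$ for the Hermite interpolation polynomial of degree $<N$ that equals $1$ (with all relevant derivatives vanishing) on $\lambda_1,\dots,\lambda_k$ and $0$ on $\lambda_{k+1},\dots,\lambda_r$, and to bound its coefficients; but the resolvent argument is cleaner, and I would go with it.
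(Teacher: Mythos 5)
Your proof is correct and takes essentially the same approach as the paper: represent $\sP_k$ as a Riesz contour integral over a curve separating $\{\lambda_1,\dots,\lambda_k\}$ from the rest, show that the curve stays at distance $\ge\nu/2$ from the spectrum and has length $O(\|\sA\|+1)$, then bound the resolvent via Cramer's rule with a lower bound $(\nu/2)^N$ on the determinant and a polynomial bound on the adjugate entries. The only differences are cosmetic: the paper uses an axes-parallel rectangle rather than a disk segment, and bounds the minors via Hadamard's inequality instead of the $(N-1)!\,M^{N-1}$ estimate.
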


\begin{proof}  Let $R\subset \C$ be 
the positively oriented and axes-parallel rectangle which intersects the imaginary axis
in $\pm i(\|A\|+1)$, and the real axis in $-(\|A\|+1)$, respectively half way between
$\re \lambda_k$ and $\re \lambda_{k+1}$.

Observe that: 
\begin{itemize}
\item  $R$ surrounds $\{ \lambda_1, \ldots, \lambda_k\}$ but not $\{ \lambda_{k+1}, \ldots, \lambda_r\}$, 
\item $\operatorname{dist}(\Spec \sA, R)\geq \nu/2$, 
\item $|R| \leq 8\|\sA\|+8$. 
\end{itemize}

Next we recall that 
$$\sP_k = {1\over 2\pi i}\oint_R    (z-\sA)^{-1} \ dz \,.$$ 
and thus
$$\|\sP_k\| \leq {1\over 2\pi} |R|   \max_{z\in R} \| (\sA-z)^{-1}\|\,.$$
Cramer's rule gives 
$$ (\sA-z)^{-1}  = {1\over \det (\sA-z)}   ( (-1)^{i+j}\det( \sA -z)_{ij} )_{i,j}.$$
Using the observations above we get 
\begin{itemize}
\item $|\det (\sA-z)| \geq C_1$ for all $z\in R$ with $C_1=({\nu\over 2})^n$.
\item $|\det(\sA-z)_{ij}|\leq  C_2 (1+\|\sA\|)^{N-1}$ for all $z\in R$, $1\le i,j\le N$, and a  constant $C_2  = C_2(N)$
(apply Hadamard's inequality for the determinant). 
\end{itemize}
The assertion follows. 
\end{proof}

\subsection{Function spaces on $Z$}\label{subsection function spaces} 
The results in this subsection are generally valid for any unimodular real spherical space.

We first recall the two standard weight functions for $Z$, 
the volume weight $\vb$ (see Section~\ref{Sect1}) and the radial weight $\rb$   
(see \cite{KS2}, Sections 4 and 9),
with the bounds:
\begin{eqnarray} 
\label{v-bound} \vb(\omega a\sw\cdot z_0) &\asymp& a^{-2\rho}
\\
\label{r-bound} \rb(\omega a \sw\cdot z_0) &\asymp&   1 + \| \log a\|
\end{eqnarray} 
for all $\omega\in \Omega, a\in A_Z^-, \sw\in \W$.
The two weight functions are related to the invariant measure 
as follows (see \cite{KS2}, Lemma 9.4)
\begin{equation}\label{rb relation}   \int_Z  \vb(z)^{-1}  \rb(z)^{-s}  \ dz <\infty \qquad  (s> \rank_\R Z)\end{equation}
where $\rank_\R Z= \dim \af_Z$.

Given $1\leq p <\infty$, $m>0$ we consider the following norms on $C_c^\infty(Z)$:

$$\| f\|_{p, m}:=  \| f \rb^m\|_p= \Big(  \int_Z  |f(z)|^p  \rb(z)^m \ dz \Big)^{{1\over p}}$$
and 
$$ q_{p, m}(f)  =\sup_{z\in Z}    |f(z)|  \vb(z)^{1\over p}  \rb(z)^m\,.$$
If $k\in\N_0$  we denote by $\|\cdot\|_{p,m; k}$, resp.~$q_{p,m;k}$,
the $k$-the Sobolev norm of $\|\cdot\|_{p,m}$, resp.~$q_{p, m}$ (see (\ref{p2k})). 

 \begin{lemma} The two families of norms  $\|\cdot\|_{p,m; k}$ and $q_{p,m;k}$ are related as follows:
\begin{enumerate} 
\item \label{Eins}  For $pm' - m> \rank_\R Z$, there exists a constant $C=C(p, m, m',k)>0$  such that 
\begin{equation}\label{Sob1}  \| f\|_{p,m;k} \leq C  q_{p,m';k} (f)  \qquad (f \in C^\infty(Z)) \, .\end{equation} 
\item \label{Zwei} For $k' >  {\dim G\over p}$ there exists a constant $C=C(p,m, k, k')>0$ such that 
\begin{equation} \label{Sob2} q_{p, m;k}(f)  \leq  C \| f\|_{p, m;  k +k'}  \qquad   (f \in C^\infty(Z))\,.\end{equation}
\end{enumerate} 
\end{lemma}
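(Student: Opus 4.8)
The plan is to treat the two inequalities separately and, in each case, to reduce to the case $k=0$ by using the equivalence between the Laplace--Sobolev norms of (\ref{p2k}) and the ordinary Sobolev norms built from left-invariant derivatives (\cite{BK}, Prop.~3.5). After this reduction it suffices to prove
\begin{equation*}
\|f\|_{p,m}\le C\, q_{p,m'}(f)\qquad\text{and}\qquad q_{p,m}(f)\le C\,\|f\|_{p,m;k'}
\end{equation*}
for $f\in C^\infty(Z)$, the second one with $k'>\dim G/p$; the statement for general $k$ then follows by applying these bounds to $L_{X^\alpha}f$, where $X^\alpha$ runs over the monomials of degree $\le k$ in a fixed basis of $\gf$, and summing (for (\ref{Zwei}) one uses $|\beta|\le k$ further derivatives of $f$ and absorbs the extra $k'$ into $k+k'$).

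For (\ref{Eins}) with $k=0$ I would just combine the pointwise bound $|f(z)|\le q_{p,m'}(f)\,\vb(z)^{-1/p}\rb(z)^{-m'}$, which is exactly the definition of $q_{p,m'}$, with the integrability statement (\ref{rb relation}):
\begin{equation*}
\|f\|_{p,m}^p=\int_Z |f(z)|^p\rb(z)^m\,dz\le q_{p,m'}(f)^p\int_Z \vb(z)^{-1}\rb(z)^{m-pm'}\,dz,
\end{equation*}
and the last integral converges because $pm'-m>\rank_\R Z$. This gives (\ref{Sob1}) with no further work.

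For (\ref{Zwei}) there are two ingredients. First, $\vb$ and $\rb$ vary slowly on a fixed-size neighborhood: fixing a relatively compact neighborhood $B$ of $\1$ in $G$, the polar decomposition (\ref{polar}) together with the comparisons (\ref{v-bound})--(\ref{r-bound}) shows that $\vb(z')\asymp\vb(z)$ and $\rb(z')\asymp\rb(z)$ for all $z'\in Bz$, uniformly in $z\in Z$, since passing from $z$ to a point of $Bz$ changes the $A_Z^-$-component of the polar coordinates only by a bounded amount. Second, I would use the fact that the proof of Bernstein's invariant Sobolev lemma (\cite{B}, key lemma; \cite{KS2}, Lemma~4.2) is local and hence yields the localized estimate
\begin{equation*}
|f(z)|^p\,\vb(z)\le C\int_{Bz}\ \sum_{|\alpha|\le k'}\bigl|L_{X^\alpha}f(z')\bigr|^p\,dz'\qquad(z\in Z),\ \ k'>\tfrac{\dim G}{p}.
\end{equation*}
Multiplying through by $\rb(z)^m$ and pulling it inside the integral via $\rb(z)^m\asymp\rb(z')^m$ on $Bz$ gives $|f(z)|^p\vb(z)\rb(z)^m\le C\sum_{|\alpha|\le k'}\|L_{X^\alpha}f\|_{p,m}^p$, and taking the supremum over $z$ (after first applying the whole argument to $L_{X^\beta}f$ for $|\beta|\le k$) yields (\ref{Sob2}).

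The hard part will be the localization step in (\ref{Zwei}): one has to make sure that the constant in the local Sobolev inequality on $Z$, and the slow-variation constants for $\vb$ and $\rb$, are genuinely independent of the base point $z$ — this is precisely where the polar decomposition (\ref{polar}) and the weight bounds (\ref{v-bound})--(\ref{r-bound}) are needed. Everything else — the equivalence of the two systems of Sobolev norms, the one-line integral estimate for (\ref{Eins}), and the bookkeeping of derivatives in passing from $k=0$ to general $k$ — is routine.
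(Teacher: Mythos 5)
Your argument for part~(\ref{Eins}) is exactly the paper's (pointwise bound from the definition of $q_{p,m'}$, then (\ref{rb relation}), then derivatives for general $k$), so no issue there. For part~(\ref{Zwei}) the paper simply cites \cite{KS2}, Lemma~9.5 for $p=2$, whereas you supply a self-contained localization argument; your route — local invariant Sobolev plus slow variation of $\vb$ and $\rb$ along the polar coordinates — is the right idea and is presumably what that reference does.

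However, there is a normalization error in your treatment of (\ref{Zwei}). By definition $q_{p,m}(f)=\sup_z|f(z)|\vb(z)^{1/p}\rb(z)^m$, hence $q_{p,m}(f)^p=\sup_z|f(z)|^p\vb(z)\rb(z)^{pm}$, with the exponent $pm$. You multiply the localized Sobolev estimate $|f(z)|^p\vb(z)\le C\int_{Bz}\sum_{|\alpha|\le k'}|L_{X^\alpha}f(z')|^p\,dz'$ by $\rb(z)^m$, and so end up with a bound for $\sup_z|f(z)|^p\vb(z)\rb(z)^m$, which is the $p$-th power of $\sup_z|f(z)|\vb(z)^{1/p}\rb(z)^{m/p}$ — not of $q_{p,m}(f)$. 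The correct step is to multiply by $\rb(z)^{pm}$, which via slow variation gives $|f(z)|^p\vb(z)\rb(z)^{pm}\le C\int_{Bz}\sum_{|\alpha|\le k'}|L_{X^\alpha}f(z')|^p\rb(z')^{pm}\,dz'$; the integral on the right is $\sum_{|\alpha|\le k'}\|L_{X^\alpha}f\,\rb^m\|_p^p$, i.e.\ it uses the weight $\rb^{pm}$ rather than $\rb^m$ in the integrand.

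This traces back to an internal inconsistency in the paper's definition: the chain $\|f\|_{p,m}:=\|f\rb^m\|_p=\bigl(\int_Z|f|^p\rb^m\,dz\bigr)^{1/p}$ is not actually a chain of equalities, since $\|f\rb^m\|_p^p=\int|f|^p\rb^{pm}\,dz$. Your proof of (\ref{Eins}) uses the right-hand formula (with $\rb^m$), which is what makes the condition $pm'-m>\rank_\R Z$ come out right. Your proof of (\ref{Zwei}), once corrected to use $\rb^{pm}$, naturally produces the middle expression $\|f\rb^m\|_p$ on the right-hand side. You cannot consistently use the same reading in both parts and recover the lemma as stated. At minimum, you should flag this ambiguity and state explicitly which convention you adopt; with the convention $\|f\|_{p,m}=\|f\rb^m\|_p$ your argument for (\ref{Zwei}) goes through verbatim after replacing $\rb^m$ by $\rb^{pm}$ in the multiplication step, but then the exponent in (\ref{Eins}) comes out as $p(m'-m)>\rank_\R Z$ instead of $pm'-m>\rank_\R Z$.

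Two smaller points. First, the claim that $\vb$ and $\rb$ vary slowly on $Bz$ with constants uniform in $z$ does need a word of justification beyond the polar decomposition: one should argue that for $b$ in a fixed compact neighborhood of $\1$, the polar coordinates of $bz$ and of $z$ differ by a bounded amount, or alternatively argue directly from the definition $\vb(z)=\vol_Z(Bz)$ that $\vol_Z(Bz)\asymp\vol_Z(B'z)$ for two fixed compact neighborhoods $B\subset B'$, which then handles $\vb$; for $\rb$ the slow variation is immediate since $\rb$ grows only logarithmically. Second, you are right that the reduction to $k=0$ via the comparison of Laplace–Sobolev and $\U(\gf)$-Sobolev norms is routine; just make sure to apply the $k=0$ inequality to each $L_{X^\beta}f$ with $|\beta|\le k$ and then use the norm equivalence to reassemble.
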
 

\begin{proof} (\ref{Eins}) For $k=0$ (no derivatives) this is an immediate consequence of  (\ref{rb relation}). The general case
is obtained by applying derivatives to $f$.  The inequality in (2) is established in the proof of \cite{KS2}, Lemma 9.5, for $p=2$.  The proof for general $p$ is entirely parallel. 
\end{proof}

In particular it follows from (\ref{v-bound}) and (\ref{Sob2}) for a matrix coefficient 
$f=m_{v,\eta}$ which is $L^p$-integrable that 

$$ \sup_{a\in A_Z^-\atop \sw\in\W} |f (a\sw\cdot z_0)|  a^{ -2\rho \over p} \leq C  \| f\|_{p; k} 
\qquad  ( k >{\dim G \over p})$$
with $\|\cdot\|_{p;k}:=\|\cdot\|_{p,0;k}$.  

Phrased differently all $L^p$-integrable
$f=m_{v,\eta}$ satisfy the a-priori bound 
\begin{equation} \label{a-priori2}   | f(a\sw\cdot z_0)| \leq   
C a^{r\rho}  \|f\|_{p;k}   \end{equation} 
for $r={2\over p}>0$ and the $G$-continuous norm  $\|\cdot\|_{p;k}$
as required in (\ref{a priori}).

\subsection{Verification of Hypothesis B}    In this subsection it is assumed that $Z$ is wavefront 
and $\rank_\R Z=1$. We also assume that $\W=\1$ for simplification of the exposition.
Furthermore, under the specific assumptions
in Theorem~\ref{thmC} we have for all non-trivial $\pi$ that 
$H_\eta/H$ is compact (see Remark \ref{compact quotient})
and thus we may as well assume that $H=H_\eta$. 

Let  $1\leq p'<p<\infty$.  
By Remark \ref{about p in Hypo B} 
it suffices to  establish the hypothesis for $p$ and $p'$ 
close to each other. We shall assume that 
$\frac 1{p'} -\frac 1p\leq \frac{c_0}2$  with $c_0$ 
the constant of Theorem \ref{thm const}.

Let $f=m_{v,\eta}$ be associated to a representation $\pi\in\Lambda_{p'}$. We assume that $f$ is $K$-fixed.
With the a priori bound (\ref{a-priori2}) for $r=\frac 2p$
we can apply Theorem \ref{thm const} and
let $\cb(f)(a)={\bf const}_v(a,\1)$ be a constant term for $f$ 
 (see (\ref{def const})). Then the 
approximation bound (\ref{c-approx})  reads

\begin{equation} \label{c-approx2}  |f(a) - \cb(f)(a)| \leq  C a^{(r +c_0)\rho}  \|f\|_{p; k+8} \qquad (a\in A_Z^-)
\, .\end{equation}

As $f$ is $K$-fixed we may assume with (\ref{Int3}) and the neglect of $\Omega_Z$  (on the fixed compact set 
$\Omega_Z$ the $L^p$-norm is dominated by the $L^\infty$-norm) that 

\begin{equation} \label{p-norm} \|f\|_p=\Big(\int_{A_Z^-} |f(a)|^p
a^{-2\rho} \ da \Big)^{1\over p}\, .\end{equation}
We now identify $A_Z^-$ with $[0,\infty[$ via an element $X\in \af_Z^-$ with $\rho(X)=-1$, i.e. 
$$[0,\infty[ \ni t \leftrightarrow a_t=\exp(tX)\in A_Z^-\, .$$
Then (\ref{p-norm}) translates into 

\begin{equation} \label{p-norm2} \|f\|_p=\Big(\int_0^\infty |f(a_t)|^p e^{2t} \  dt \Big)^{1\over p}\, .\end{equation}

We let $R>0$, to be specified later,  and begin with the estimate:

\begin{equation}  \label{e1} \|f\|_p = \| f\|_{p,[0,R]}  +  \|f\|_{p, [R,\infty[}  
\leq  e^{2R\over p} \| f\|_{\infty,[0,R]}+ 
\| f - \cb(f) \|_{p, [R,\infty[}  + \| \cb(f)\|_{p, [R,\infty[}\, .\end{equation}
By assumption $f$ is
$L^{p'}$-bounded (in particular satisfies the bound (\ref{a-priori2})
for $r={2\over p'}$), the leading coefficients $\lambda_i$ of $\cb(f)$ satisfy $\re \lambda_i(X)\leq -{2\over p'}$. 
Hence  we obtain from ${2\over p'} - {2\over p}\leq c_0$ and  
(\ref{c-approx improved}) that 
\begin{equation} \label{e2} \| \cb(f)\|_{p, [R,\infty[}  \leq  C \|f\|_{p; k+8}
\Big( \int_R^\infty e^{- 2{p\over p'}t}  e^{2t } dt\Big)^{1\over p} 
= C e^{-({2\over p'} - {2\over p})R} \|f\|_{p; k+8}\,.\end{equation}  
Note that $C$ does not depend on $\pi$.

Further, from  (\ref{c-approx2}) we obtain

\begin{equation} \label{e3}  \| f - \cb(f) \|_{p,[R,\infty[} \leq 
C\Big(\int_R^\infty e^{-(2 + pc_0)t} e^{2t}\,dt \Big)^{1\over p}   
\|f\|_{p;k+8}\leq C e^{-R c_0} \|f\|_{p;k+8}\, .\end{equation}
Set now  $\delta:={2\over p'} - {2\over p}$ and note that $\delta\leq c_0$.
Inserting the bounds from 
(\ref{e2}) and (\ref{e3}) into (\ref{e1}) we thus obtain that

$$ \|f\|_{p} \leq C (e^{ 2R\over p}  \|f\|_\infty+ e^{-R\delta} \|f\|_{p; k+8})$$
which, in view of (\ref{Sobinf}),  yields
\begin{equation}\label{e5}  \|f\|_{p} \leq C(e^{ 2R\over p}  \|f\|_\infty+  e^{-R\delta} (1+|\pi|)^{{k\over 2} + 4}  \|f\|_{p}) \,. \end{equation} 

So far, $R>0$ was arbitrary, but we now choose it such that

$$ C e^{- R\delta}  ( 1+|\pi|)^{{k\over 2}+4}\leq {1\over 2}$$
holds for the constant $C$ of (\ref{e5}), i.e. 

\begin{equation} \label{eR} R=R_\pi= {k+ 8\over 2\delta}  \log (1 +|\pi|) + \frac{\log(2C)}{\delta} \end{equation} 
will do. Then  we obtain from (\ref{e5}) 
$$\| f\|_{p} \leq 2Ce^{2R_\pi\over p} \|f\|_\infty,$$
and hence
$$
\| f\|_{p} 
\leq C ( 1+|\pi|)^{ {k+8\over \delta p}} \|f\|_\infty 
$$
with $C>0$ independent of $\pi$.
It follows  with $l:={k+8\over 2\delta p}$ 
\begin{equation} \label{e6} \|f\|_{p} \leq C \|f\|_{\infty; l}\,  \end{equation}
which is  Hypothesis B.

\section{$L^p$ versus $L^\infty$ in higher rank}\label{Sect7}
The purpose of this section is to  give a proof  of  (\ref{expect}) for general unimodular wavefront spaces. 
Observe that (\ref{expect}) is more general than Hypothesis B in two respects: 
\begin{itemize}  
\item The eigenfunction $f$ is not $K$-fixed.
\item The $G$-representation generated by $f$ is not necessarily unitarizable. 
\end{itemize} 

However,  our verification of Hypothesis B for the rank one cases does not really need
the assumption that the eigenfunction is $K$-fixed (the argument  given
allows to sum up $K$-types).   Further, the main ingredient is the 
constant term approximation, which does not require that the underlying Harish-Chandra module is 
unitarizable. 
 
\par Regarding the lattice counting problem we note that (\ref{expect})
also allows counting for balls which are not $K$-invariant.  

\par We now turn to the heart of the matter:

\subsection{The constant term approximation in higher rank}

Let $\tf\subset \mf$ be a maximal torus and set $\cf:=\af +\tf$. Then $\cf<\gf$ is a Cartan algebra and we let 
$\Wc_\cf$ be the Weyl group associated 
to the root system $\Sigma(\cf_\C, \gf_\C)$. Further we choose a positive system 
$\Sigma^+(\cf_\C, \gf_\C)$ of $\Sigma(\cf_\C, \gf_\C)$ such that $\Sigma^+(\cf_\C, \gf_\C)|_{\af} \bs \{0\} = \Sigma^+(\af, \gf)$. 

For a subset of spherical roots $I\subset S$ we set 
$$\af_I:=\{ X\in \af_Z\mid (\forall \alpha\in I) \   \alpha(X) =0\}\, ,$$
$$\af_I^{--}:=\{ X\in \af_I\mid (\forall\alpha \in S\bs I)\    \alpha(X) <0\}\subset \af_Z^-\, .$$

\par Finally for an irreducible Harish-Chandra module $V$ we let $\Lambda_V\in \cf_\C^*$ be the associated infinitesimal 
character. 
Then with the techniques provided in \cite{DKS}, but essentially the same proof, we obtain the following analogue 
of Theorem \ref{thm const}.

\begin{theorem}[Constant term approximation - higher rank]\label{thm const hr}   Let $Z=G/H$ be a 
wave-front real spherical space.
Let $I\subset S$ and $\Cc_I\subset \af_I^{--}$ be a compact subset.
Then there exist constants 
$C, c_0>0$, $k, d\in\N$  (depending only on $Z$ and $\Cc_I$) 
such that  the following assertion holds true: 

Let $(V,\eta)$ be a spherical pair with $V$ irreducible and 
which satisfies an a priori-bound 
 \begin{equation} \label{a priori hr}  
|m_{v,\eta}  (\omega a \sw\cdot z_0 ) | \leq  a^{r \rho}  p(v)    
 \qquad   (v \in V^\infty; \omega\in \Omega, a\in A_Z^-, \sw\in \W) 
 \end{equation} 
for some $r>0$ and a $G$-continuous norm $p$.    Then there exists
a subset $\E_V\subset \rho +\Wc_\cf\cdot \Lambda_V$ with  

\begin{equation}  (\forall \lambda\in \E_V, X\in \af_Z^-) \quad \re \lambda(X) \leq r \rho(X)  \end{equation} 
such that for all $v\in V^\infty$, $\sw\in\W$  and $\lambda\in \E_V$ there exist polynomials 
$c^I_{v,\lambda, \sw}\in \operatorname{Pol}(\af_I)$ of degree bounded by $d$ such that the sum

\begin{equation} \label{def const hr} {\bf  const}_\sw^I(v)(a):=  \sum_{\lambda\in\E_V} a^\lambda c^I_{v,\lambda, \sw}(\log a)\qquad (a\in A_Z)\end{equation}
satisfies for all $X\in\Cc_I$, $a\in A_Z^-$, $t\geq 0$, ${v\in V^\infty}$ and $\sw \in \W$ that

\begin{equation} \label{c-approx hr} | m_{v,\eta} ( a \exp(tX) \sw \cdot z_0) -  {\bf const}^I_\sw(v) (a\exp(tX) ) |\leq 
C  p_k(v)  a^{r\rho}    e^{  t (r+c_0) \rho(X)} \, .\end{equation} 
If in addition there exists an $r\leq r'\leq r+c_0$ such that the improved bound
\begin{equation}  (\forall \lambda\in \E_V, X\in \af_Z^-) \quad \re \lambda(X) \leq r' \rho(X)  \end{equation}
holds, then 
\begin{equation} |{\bf const}^I_\sw(v) (a) |\leq 
C  p_k(v)  a^{r'\rho}    \qquad(a\in A_Z^-, v\in V^\infty, w\in \W) \, .\end{equation}

\end{theorem}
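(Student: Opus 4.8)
The plan is to lift the Harish-Chandra style argument of Theorem~\ref{thm const} from the rank-one situation to the face of the compression cone cut out by $I\subset S$, by invoking the machinery of \cite{DKS}. As in the proof of Theorem~\ref{thm const} we reduce at once to $\W=\{\1\}$ and write $f=m_{v,\eta}$; since $V$ is irreducible, $f$ is a $\Zc(\gf)$-eigenfunction with eigencharacter attached to $\Lambda_V$ by the Harish-Chandra homomorphism. The point of departure is that the restriction $a\mapsto f(a\cdot z_0)$ to $A_Z\cdot z_0$ is governed by a holonomic $\U(\af_I)$-system. Using the PBW decomposition $\U(\gf)=\U(\oline\uf)\,\U(\lf)\,\U(\uf)$, the realization (\ref{h as graph}) of $\hf$ as the graph of the map $T$ from (\ref{Tmap}), the right $H$-invariance of $f$, and its $\Zc(\gf)$-finiteness, one produces a finite-dimensional space of derivatives $\{L_uf\}_u$, with $u$ ranging over a fixed basis of $\U(\gf)$ up to some order $N=N(Z)$, which is stable under $\af_I$-differentiation up to terms carrying factors $(a)^{\beta}$ with $\beta$ an $\af$-weight of $\uf$ that is strictly negative on $\af_I^{--}$.

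Concretely, fix $X\in\Cc_I$ and $a'\in A_Z^-$, and form the Harish-Chandra vector $\Phi(t)$ whose entries are $t\mapsto(L_uf)(a'\exp(tX)\cdot z_0)$; it satisfies a constant-coefficient system
\[
\Phi'(t)=\sA(X)\,\Phi(t)+R(t),
\]
where the spectrum of $\sA(X)$, by the Harish-Chandra determination of the exponents (the $\rho$-shift coming from the modular factor $a^{-2\rho}$ visible in (\ref{vbound})), is contained in $\{-\lambda(X)\mid\lambda\in\rho+\Wc_\cf\cdot\Lambda_V\}$, and $R(t)$ collects the remaining terms. Each $L_uf$ appearing in $R$ is again a generalized matrix coefficient of a vector obtained from $v$ by the $\gf$-action, weighted by some $(a)^{\beta}$ with $\beta|_{\af_I^{--}}<0$; applying the a priori bound (\ref{a priori hr}) to these finitely many coefficients therefore gives $\|R(t)\|\le C\,p_N(v)\,(a')^{r\rho}\,e^{t(r+c_0)\rho(X)}$, with $c_0>0$ a lower bound for $|\beta(X)|/|\rho(X)|$ that is uniform as $X$ runs over the \emph{compact} set $\Cc_I\subset\af_I^{--}$; this uniformity, together with the wavefront property keeping the relevant factors in the right half-space, is where the hypotheses on $Z$ and $\Cc_I$ are used.

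From here the argument follows Theorem~\ref{thm const} step for step. Let $\sP=\sP(X)$ be the spectral projection of $\sA(X)$ onto the generalized eigenspaces whose eigenvalues are close enough to the critical threshold, the cutoff $\delta$ being chosen exactly as in the proof of Theorem~\ref{thm const} so that the ensuing integrals converge and, via the higher-rank form of Lemma~\ref{matrix lemma}, so that $\|\sP\|\le C(1+\|\sA(X)\|)^N$; since $\|\sA(X)\|$ is polynomially bounded in the infinitesimal-character data of $V$, the Sobolev comparisons in (\ref{p2k})--(\ref{Sobinf}) (used $K$-type by $K$-type and summed, exactly as in Section~\ref{Sect6}) absorb this factor into a fixed Sobolev order. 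Variation of constants and the Gelfand-Shilov bound for $\|e^{t\sA}\|$ then show, just as for the integrals $I_1,I_2$ in the proof of Theorem~\ref{thm const}, that the first coordinate of $e^{t\sA(X)}u$ with $u=\Phi(0)+\int_0^\infty e^{-s\sA}\sP R(s)\,ds$ approximates $f(a'\exp(tX)\cdot z_0)$ with error $\le C\,p_k(v)\,(a')^{r\rho}e^{t(r+c_0)\rho(X)}$, which is (\ref{c-approx hr}). Decomposing $u$ along the generalized eigenspaces of $\sA(X)$, and checking that the resulting coefficients glue consistently as $X$ and $a'$ vary, yields the single function ${\bf const}_\sw^I(v)$ on $A_Z$ of the form (\ref{def const hr}) --- the Jordan blocks and the confluences among exponents restricted to $\af_I$ giving the polynomials $c^I_{v,\lambda,\sw}$ of degree bounded by some $d=d(Z,\Cc_I)$ --- and one takes for $\E_V$ the set of exponents that actually occur. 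Comparing (\ref{a priori hr}) with (\ref{c-approx hr}) forces $\re\lambda(X)\le r\rho(X)$ for $\lambda\in\E_V$, $X\in\af_Z^-$, and since $\|u\|\le C\,p_k(v)$ the improved bound is immediate once $\re\lambda(X)\le r'\rho(X)$.

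The genuine work --- the reason this is delegated to \cite{DKS} rather than carried out by the elementary $\sl(2,\R)$ computation of Theorem~\ref{thm const} --- is twofold. First, one must actually build the holonomic system in the $\af_I$-directions and prove that its characteristic exponents along $\af_I^{--}$ lie in $(\rho+\Wc_\cf\cdot\Lambda_V)|_{\af_I}$; this is Harish-Chandra's asymptotic analysis \cite{HC} transported to wavefront real spherical spaces, and it is what pins down the admissible $\lambda$. Second, one needs the remainder estimate with the \emph{uniform} exponential gap $c_0\rho(X)$ and the \emph{uniform} Sobolev order $k$ simultaneously, over all $X\in\Cc_I$ and all irreducible $V$, which forces the basis size $N$, the degree bound $d$ and the order $k$ to depend only on $Z$ and $\Cc_I$; the uniform resolvent bound of Lemma~\ref{matrix lemma} and the uniform Sobolev comparison (\ref{Sobinf}) are precisely the tools that secure this. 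Compared with the rank-one case, the essentially new difficulty is the \emph{consistency} of the constant terms obtained in the various directions $X\in\af_I^{--}$: one must show that they are restrictions of one and the same function on $A_Z$, and this is exactly where holonomicity of the system --- as opposed to a single ODE --- is indispensable.
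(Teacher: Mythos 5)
Your proposal is correct and follows essentially the same route as the paper: construct a finite Harish-Chandra vector via the machinery of \cite{DKS} (the paper phrases it with the $W^*$-valued function coming from $\Zc(\lf_I)\simeq\gamma_I(\Zc(\gf))\otimes W$, while you speak more loosely of a finite collection of derivatives $L_uf$), derive the first-order $\af_I$-system with $\Gamma_V(X)$ having spectrum in $-(\rho+\Wc_\cf\Lambda_V)(X)$, bound the remainder uniformly over the compact $\Cc_I$, and close the argument with the spectral projection, Lemma~\ref{matrix lemma}, and Gelfand--Shilov exactly as in the rank-one case. Your observation that holonomicity (the whole $\af_I$-system, not a single ODE) is what guarantees the compatibility of the constant terms in different directions is also precisely how the paper handles it, via $\Gamma_V:\af_I\to\operatorname{End}(W^*)$ being a Lie algebra homomorphism.
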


\begin{rmk} \label{debate on ct}The constant term approximation in Theorem \ref{thm const hr} deviates from 
the one in \cite{DKS} in several aspects. The a priori-bound (\ref{a priori hr}) used in \cite{DKS}
 is 
\begin{equation} \label{a priori hr DKS}  |m_{v,\eta}  (\omega a \sw\cdot z_0 ) | \leq  a^\rho (1+\|\log a\|)^N  p(v)    
 \qquad   (v \in V^\infty; \omega\in \Omega, a\in A_Z^-, \sw\in \W) \end{equation} 
for some $G$-continuous norm $p$ and $N\in \N$. If we ignore logarithmic terms, then this is in essence the 
case $r=1$ above, and the presence of logarithmic terms does not pose additional technical difficulties. 
\par   The constant term in \cite{DKS} is unique and obtained from the one 
above by replacing $\E_V$ with $\E_V':=\{ \lambda\in \E_V\mid \re \lambda =0\}$, i.e. by discarding 
all exponents $\lambda$ for which $\re \lambda\neq 0$.  As such the constant term approximation in 
\cite{DKS} is only uniform if one fixes $\re \Lambda_V$.  The constant term approximation 
above however is uniform (i.e. holds for all $\Lambda_V$), but is non-unique 
and also allows exponents $\lambda$ with $\re \lambda\neq 0$.  
\end{rmk} 

We now provide a proof of Theorem \ref{thm const hr}, that is we will guide the reader through parts of  \cite{DKS} 
until the analogy to the rank one case becomes fully apparent.

\begin{proof}    We first recall the boundary degenerations $\hf_I$  of $\hf$ (see \cite{KKS}, Sect. 3):  Let $d:=\dim \hf$ and $X_I\in \af_I^{--}$.
Then 
$$\hf_I:= \lim_{t\to \infty}  e^{t \ad X} \hf$$
exists in the Grassmannian of $d$-dimensional subspaces of $\gf$ and is independent of the choice of $X_I\in \af^{--}$. 
The geometric meaning of $\hf_I$ is approximately as follows:   the homogeneous
space $Z=G/H$ admits a $G$-equivariant  compactification $\hat Z$ with $G$-boundary orbits  isomorphic to $\hat Z_I =  G/ \hat H_I$ where $\operatorname{Lie}(\hat H_I)= \hf_I + \af_I$.  The normal bundles to each $\hat Z_I$ are given by 
$Z_I=G/H_I$ where  $\operatorname{Lie}(H_I)= \hf_I$. 

Now choose $Y_I\in \af \cap \zf(\lf)$ such that $X_I = Y_I +\af_H$ and define a parabolic subalgebra

$$ \oline\pf_I=  \mf +\af +\bigoplus_{\alpha\in \Sigma\atop \alpha(Y)\geq 0}   \gf^\alpha\, .$$ 
We write $\oline\pf_I = \lf_I \ltimes  \oline \uf_I$  for the standard Levi-decomposition.   
A special feature of wavefront spaces then is (see \cite{KKS}, Prop. 5.5): 

$$ \oline \uf_I \subset \hf_I \subset \oline \pf_I\, .$$

\par Next we recall material from \cite{DKS}, Sect. 3.3.   Let $\Zc(\gf)$ and $\Zc(\lf_I)$ denote the centers of the universal 
enveloping algebras $\U(\gf)$ and $\U(\lf_I)$.   Denote by $\gamma_I:  \Zc(\gf) \to \Zc(\lf_I)$ the partial Harish-Chandra
homomorphism. We fix a finite-dimensional vector subspace $W\subset \Zc(\lf_I)$ such that 
$\Zc(\lf_I)\simeq \gamma_I(\Zc(\gf)) \otimes W$.  Recall that we assumed the Harish-Chandra module $V$ to be 
irreducible and we let $\chi_V: \Zc(\gf) \to \C$  the corresponding infinitesimal character.  Set ${\mathcal J}_{V,I}:= 
\gamma_I (\ker \chi_V)$ and record 
$$ \Zc(\lf_I)=  W  \oplus {\mathcal J}_{V,I} W\, .$$

\par  For a differentiable function $f$ on $Z$ and $X\in \gf$ we write $L_X f (z)={d\over dt}\big|_{t=0}  F(\exp(-tX)z)$
for the left derivative.    In case $f= m_{v,\eta}$ for a smooth vector $v\in V^\infty$ 
we record that $L_X (m_{v,\eta}) = m_{Xv, \eta}$. 
Now given $f=m_{v,\eta}$ as above we construct a $W^*$-valued  smooth function $F$ on $Z$ as follows: 

$$ F(z)(w):= L(w) f(z)  \qquad  (w\in W, z\in Z)\, .$$
Let $a\in A_Z^-$ and define a function  $F_{I,a}$ on $A_I$ by $F_{I,a}(a_I)=F(aa_I)$ for $a_I\in A_I$.

Then  there exists a Lie algebra homomorphism 

$$ \af_I \to  \operatorname{End} (W^*), \ \ X\mapsto \Gamma_V(X)$$ 
such that  $F_{I,a}$ satisfies the differential equation 

\begin{equation}\label{ODE hr} L_X  F_{I,a}  = \Gamma_V(X)  F_{I,a}   +   R_{I,a}   \qquad (X\in \af_I)\end{equation}
with 
$$R_{I,a}(a_I)(w)  =  \sum_{i=1}^n   L_{X_i} L_{v_i} f(aa_I)\qquad (a_I\in A_I)$$
where $(X_i)_{i=1}^n$ is a fixed basis of $\oline \uf_I$  and $v_i=v_i(w)\in \U(\gf)$ are elements which depend linearly 
on $w\in W$ and polynomially on $\chi_V$ (this is variant of formula  (3.23) in \cite{DKS}).  Moreover 
$\Gamma_V$ depends polynomially on $\chi_V$  and 
\begin{equation} \label{Spec Gamma}\Spec \Gamma_V(X) \subset -(\rho + \Wc_\cf \Lambda_V)(X)\, .
\end{equation} 

Recall the norm $p$ for which the matrix coefficients $m_{v,\eta}$ satisfy the a priori bound (\ref{a priori hr}).  Then we obtain 
from a slight variant of  \cite{DKS}, Lemma 3.5 (ii)  constants $C>0$ and $k, N\in \N$  which are all independent of $V$ such that

\begin{equation} \label{remainder R}\|R_{I,a}(\exp(X))\|\leq   C a^{r\rho} e^{ r\rho(X)   +\beta(X)}  (1 +|\chi_V|)^N  p_k(v) \qquad (X\in \af_I^-, a\in A_Z^-)\end{equation}
where $\beta(X):=\max_{\alpha\in \Sigma(\af, \uf_I)}   \alpha(X)<0$.

Also notice the general solution formula for (\ref{ODE hr})

$$F_{I,a}(\exp(tX))  =  e^{-t\Gamma_V(X)}  F(a)  -  e^{-t\Gamma_V(X)} \int_0^t e^{s\Gamma_V(X)} R_{I,a}(\exp(sX))
\ ds\qquad (X\in \af_I^-)\, .$$

\par Now we are ready to make the comparison to the proof in the rank one case.  The differential 
equation (\ref{ODE}) from the rank one case is replaced by (\ref{ODE hr}), 
and  (\ref{remainder R})
will replace the remainder estimate from (\ref{remainder r(t)}).  Further the equation (\ref{mu value})
for the exponents is replaced by (\ref{Spec Gamma}).  

\par As for the spectral gap we let 
$c_1$  be the largest possible value $c>0$  such that $c |\rho(X)| \leq |\beta(X)|$  for all $X\in\Cc_I$.  For $\delta>0$ we consider the spectral projection 
$$\sP:  W^*  \to \bigoplus_{\lambda \in \Spec \Gamma_V(X)\atop
-\re \lambda(X) \geq (r +c_1 -\delta)\rho(X)}  E(\lambda)$$
with $E(\lambda)$ denoting the generalized eigenspace of $\Gamma_V(X)$ on $W^*$. Because of 
(\ref{Spec Gamma}),  the pigeonhole principle combined with Lemma \ref{matrix lemma} yields a $\delta=\delta(V, \Cc_I)< 
{c_1\over 2|\Wc_\cf|}$ such that 
$$\|\sP\|\leq C (1 +|\Lambda_V|)^N\qquad (X\in\Cc_I) $$
for constants $C,N>0$ not depending on $V$ (recall that $\Gamma_V$ depends polynomially on $\Lambda_V$). This finally gives us 
$$\E_V:=\{  \lambda\in -(\rho +\Wc_\cf\Lambda_V)\mid (\forall X\in \Cc_I) \  (r+c_1-\delta) \rho(X) \leq  
-\re \lambda(X) \leq r \rho(X)\}\, .  $$

Having said all that we can now proceed 
as in the rank one case with integrals $I_1(t)$ and $I_2(t)$ now depending (uniformly) on $X\in \Cc_I$ and 
the additional variable $a\in A_Z^-$. 
This then leads to the asserted bound (\ref{c-approx hr}).  
\end{proof}

\subsection{Proof of (\ref{expect})}

We now prove (\ref{expect}) using the constant term approximation from Theorem \ref{thm const hr}.   The proof 
is parallel to the rank one case and we confine ourselves with an explanation of the  essential changes which are needed 
for the adaption of the proof. 

\par Let $\af_{Z,E}:=\af_S$ and note that $\af_{Z,E}$ 
normalizes $\hf$  (it is the Lie algebra of the "center"  of $Z$) and set $A_{Z,E}:=\exp(\af_{Z,E})$. 
Since the space of $H$-invariant continuous functionals of $V^\infty$ is finite dimensional \cite{KS},  we obtain 
an $A_{Z,E}$-finite right action on the eigenfunctions $f=m_{v,\eta}$ attached to $V$.  In particular there are no non-zero $L^p$-integrable eigenfunctions  if $\af_{Z,E}\neq \{0\}$.  We can thus request that $\af_{Z,E}=0$, and obtain 
in particular 
$$\af_Z= \af_I \oplus \af_{S\bs I} \qquad (I \subset S)\, .$$

We let $B$ be a compact neighborhood of $\1$ in $G$ such that
the conclusion of 
Lemma \ref{lemma int-ineq} (\ref{intineq1}) holds. In particular,

\begin{equation} \label{int-ineq} \int_Z  f(z)\ dz \leq  C \sum_{\sw\in \W} \int_B \int_{A_Z^-}  f(ga\sw\cdot z_0)  a^{-2\rho}  \ da \ dg \qquad 
( f\geq 0, \text{measurable})\end{equation}  
where $dz, dg, da$ are invariant measures on $Z, G, A_Z^-$. 

Next we need to decompose the compression cone $\af_Z^-$ suitably along its faces so that we can apply 
Theorem \ref{thm const hr}.   For $I\subset S$ and $X\in \af_Z$ we write $X=X_I +X^I$ with $X_I \in \af_I$ and $X^I \in 
\af_{S\bs I}$.  For $\delta>0$ and $\#S\bs I=1$ we define 
$$D_I:=\{ X\in \af_Z^- \mid  \|X\|\leq  (1 + \delta) \|X_I\|\} $$ 
and for $\#S\bs I>1$ by recursion 
$$D_I:=\{ X\in \af_Z^-\bs \bigcup_{S\supsetneq J\supsetneq I}D_J\mid  \|X\|\leq (1+\delta) \|X_I\|\}\, .$$ 
Then 
\begin{equation} \label{I-union} \af_Z^-=\bigcup_{I\subsetneq S} D_I\end{equation} 
and for every $I\subsetneq S$ there exists a compact set $\Cc_I \subset \af_I^{--}$ such that 
\begin{equation}\label{I-union2} D_I \subset \{X= X_I +X^I\in \af_Z^- \mid X_I \in \R_{\geq 0} \Cc_I, \|X\|\leq (1+\delta)\|X_I\|\}\, .\end{equation} 
Both assertions (\ref{I-union}) and (\ref{I-union2})  are geometrically obvious for $\dim \af_Z={1,2,3}$ and the general case is an elementary 
verification based on induction of the dimension.  For $X=X_I+X^I \in D_I$ and $a=\exp(X) \in A_Z^-$ we set 
$a_I=\exp(X_I)$ and $a^I=\exp(X^I)$ so that $a=a_I a^I$.

\par  Next we recall the notation on function spaces from Subsection \ref{subsection function spaces} and the norm-inequality in (\ref{Sob1}). 

Let $f=m_{v,\eta}$ be $L^p$-integrable with $v\in V^\infty$ a smooth vector.   For $g\in B, a\in A_Z^-, \sw\in \W$ now set 
$$\cb_\sw^I(f)(ga\sw\cdot z_0):={\bf const}_\sw^I(g\cdot  v) (a)\, .$$
Recall the compact set $\Cc_I\subset \af_I^{--}$ from (\ref{I-union2}). 
Then Theorem \ref{thm const hr} applied to $r={2\over p}$ implies a constant $c_0=c_0(V, \Cc_I)>0$ such that

\begin{equation} \label{c-approx2hr0}  |f(ga\sw\cdot z_0) - \cb_\sw^I(f)(ga\sw\cdot z_0)|\leq C  
a^{{2\over p}\rho}  a_I^{c_0 \rho} \|f\|_{p; k+k'}\qquad (g\in B, \log a\in D_I, \sw\in \W) \end{equation} 
where $k\geq 0$ is the Sobolev-constant from Theorem \ref{thm const hr}  and $k'>{\dim G\over p}$.   The constant $C$ is universal in the sense that 
it only depends on $p$ and $Z$. 
Now, (\ref{c-approx2hr0}) combined with (\ref{I-union2}) yields an $\e>0$,  only depending on the fixed choice $\delta>0$ 
and $c_0$,  such that 

\begin{equation} \label{c-approx2hr}  |f(ga\sw\cdot z_0) - \cb_\sw^I(f)(ga\sw\cdot z_0)|\leq C  a^{{(1+\e)2\over p}\rho} \|f\|_{p; k+k'}\qquad (g\in B, \log a\in D_I, \sw\in \W) \, .\end{equation}

\par For $R\geq 0$  we now set $D_{I, \geq R}:=\{ X\in D_I \mid - \rho(X) \geq R\}$ and set 
$$ \Omega_{\sw , \geq R}^I:=  B  \exp (D_{I,\geq R}) \sw\cdot z_0\, .$$
Further we set $\Omega_{\leq R}:= B A_R^- \W \cdot z_0$ and $\Omega_{\geq R}:= B (A^- \bs A_R^-) \W \cdot z_0$
and note that $\vol_Z (\Omega_{\leq R}) \leq C e^{2R}$ as a consequence of (\ref{int-ineq}).

\par The starting estimate (\ref{e1}) from the rank one case now becomes:

\begin{eqnarray}   \|f\|_p& =& \| f\|_{p, \Omega_{\leq R}}  +  \|f\|_{p, \Omega_{\geq R}} \notag \\ 
\label{E1}&\leq&  C e^{2R\over p} \| f\|_{\infty,\Omega_{\leq R}}+ 
\sum_{I\subset S\atop \sw\in \W} \big( \| f - \cb_\sw^I(f) \|_{p, \Omega_{\sw, \geq R}^I}  + \| \cb_\sw^I(f)\|_{p, \Omega_{\sw, \geq R}^I}\big) \, .\end{eqnarray}
The next two steps (\ref{e2}) and (\ref{e3}) from the rank one case are now completely parallel: Instead of (\ref{p-norm}) we use 

$$ \|f\|_p \leq  \Big(\sum_{\sw\in \W} \int_B \int_{A_Z^-}  |f(ga\sw\cdot z_0)|^p  a^{-2\rho} \ da\ dg \Big)^{1\over p}$$
which we obtain from (\ref{int-ineq}) and the 
a-priori estimate (\ref{c-approx2}) is replaced by the inequalities (\ref{c-approx2hr}).  We then arrive at the inequality 

\begin{equation} \label{E2} \|f\|_p \leq  C (e^{2R\over p}  \|f\|_\infty + e^{-R\e_0}\|f\|_{p; k +k'})\end{equation}
where $\e_0>0$ only depends on $\e>0$ and ${1\over p'} - {1\over p}>0$. 

So far we have not specified $R$ nor requested that $f$ is $K$-finite.   We now take the $K$-Fourier series of $f$: 

$$ f= \sum_{\tau\in \hat K} f_\tau$$
where 
\begin{equation} \label{f-tau} f_\tau(z):=  (\dim \tau) \int_K  \operatorname {tr} (\tau(k))  f( kz)   \ dk  \end{equation} 
Then we obviously have $\|f\|_p\leq \sum_{\tau\in \hat K} \|f_\tau\|_p$. The converse is true up to 
a Sobolev-shift $l=l(K)\in\N$, i.e. 

\begin{equation}\label{sum tau}   \sum_{\tau\in \hat K} \|f_\tau\|_p \leq C \|f\|_{p; l}\, .\end{equation} 
This is quite standard but we briefly sketch the proof for part of the audience which has not seen that before: 
Let $\chi_\tau\leq 0$ be the eigenvalue of $\Delta_\kf$ on the representation $\tau$.  Then, first it follows from  
(\ref{f-tau})  that  $\|f_\tau\|_p\leq (\dim \tau)^2 \|f\|_p$. Thus for $l\in \N$ we get 
$$ |\chi_\tau|^l \|f_\tau\|_p = \|\Delta_\kf^l  f_\tau\|\leq  (\dim \tau)^2\|f\|_{p; 2l}\, .$$
Finally the Weyl-dimension formula gives $\sum_{\tau\in \hat K}  {(\dim \tau)^2 \over |\chi_\tau|^l}<\infty$ provided 
$l=l(K)$ is large enough and (\ref{sum tau}) is established. 

\par Our reasoning implies in particular that we may assume in the sequel that $f=f_\tau$.  
Next we note that $\Delta f = (\chi_V + 2 \chi_\tau) f$ and therefore we get from (\ref{E2}) that 
\begin{equation} \label{E3} 
\|f\|_p \leq  C (e^{2R\over p}  \|f\|_\infty + e^{-R\e_0} ( 1 + |\chi_V + 2\chi_\tau|)^{k +k'\over 2}  \|f\|_{p})
\end{equation}
which is the analogue of (\ref{e5})  in the rank one case. From here the arguments run parallel to the rank one case.

\section{Appendix:  Mostow decomposition}

In this appendix we establish a variant of the Mostow-decomposition which we use in Section \ref{Sect2}.

\begin{theorem} \label{Mostow dec}{\rm (Mostow, \cite{MostowII})}Let $G$ be a real algebraic group and 
$H<G$ a real algebraic subgroup.  Then there exists 
a maximal compact subgroup $K<G$ such that $K\cap H $ is maximal compact in $H$ and a vector subspace
$V\subset \gf$ which is invariant under $N_K(H)$ such that the Mostow map 

\begin{equation}\Phi:  K \times_{K\cap H}  V \to G/H,  \ \ [g,X]\mapsto g \exp(X)H\end{equation} 
is a diffeomorphism. 
\end{theorem}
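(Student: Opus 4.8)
The plan is to reprove Mostow's theorem, organised around the nonpositive curvature of the Riemannian symmetric space of $G$. First I would fix the Cartan data. By a theorem of Mostow (\cite{MostowII}) one may realize $G$ as a self-adjoint algebraic subgroup of some $\operatorname{GL}(m,\R)$, i.e.\ stable under $g\mapsto (g^{\mathsf{T}})^{-1}$, in such a way that $H$ is self-adjoint as well; equivalently there is a Cartan involution $\theta$ of $G$ with $\theta(H)=H$. Set $K=G^\theta$ (a maximal compact subgroup, using that $G$, being real algebraic, has finitely many components) and $K_H=K\cap H=H^\theta$, which is then maximal compact in $H$; write $\gf=\kf\oplus\pf$ and $\hf=\kf_H\oplus\pf_H$ for the associated Cartan decompositions, with $\kf_H=\kf\cap\hf$ and $\pf_H=\pf\cap\hf$. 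Fix a $\theta$-invariant, $\Ad(K)$-invariant inner product $\langle\cdot,\cdot\rangle$ on $\gf$ with $\kf\perp\pf$ (for instance $\langle A,B\rangle=\operatorname{tr}(A^{\mathsf{T}}B)$ in the self-adjoint model) and put $V:=\pf\ominus\pf_H$. Every $k\in N_K(H)$ preserves $\hf$ and commutes with $\theta$ while preserving $\langle\cdot,\cdot\rangle$, hence preserves $\pf_H=\hf\cap\pf$ and therefore $V$; in particular $V$ is $K_H$-invariant, so $K\times_{K_H}V$ is a genuine vector bundle over $K/K_H$ and $\Phi([k,X])=k\exp(X)H$ is well defined on it.

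Next I would set up the geometry. Let $\mathcal{P}:=G/K$ with base point $o=eK$, realized as the totally geodesic submanifold $G\cdot I$ of $\operatorname{SPD}_m=\operatorname{GL}(m,\R)/\operatorname{O}(m)$; then $\mathcal{P}$ is a Hadamard manifold, $T_o\mathcal{P}=\pf$, the geodesics through $o$ are $t\mapsto\exp(tX)\cdot o$ with $X\in\pf$, and $X\mapsto\exp(X)\cdot o$ is a diffeomorphism of $\pf$ onto $\mathcal{P}$. The orbit $N:=H\cdot o$ is a properly embedded copy of $H/K_H$, hence closed in $\mathcal{P}$, and it is totally geodesic because $\pf_H=\hf\cap\pf$ is a Lie triple system; moreover $T_oN=\pf_H$, so the normal space to $N$ at $o$ is exactly $V$. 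Let $\pi:\mathcal{P}\to N$ be the nearest-point projection: it is well defined and smooth because $\mathcal{P}$ is Hadamard and $N$ is closed and convex, it is $H$-equivariant because $H$ acts isometrically preserving $N$, and its fibre over $o$ is $\pi^{-1}(o)=\exp(V)\cdot o$, the image of the normal space under the geodesic exponential.

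Then I would prove that $\Phi$ is a diffeomorphism. For surjectivity, given $g\in G$ pick $h_0\in H$ with $\pi(g^{-1}\cdot o)=h_0\cdot o$; equivariance gives $\pi(h_0^{-1}g^{-1}\cdot o)=o$, so $h_0^{-1}g^{-1}\cdot o=\exp(X)\cdot o$ for a unique $X\in V$, whence $\exp(-X)h_0^{-1}g^{-1}\in K$ and $gH=k\exp(-X)H=\Phi([k,-X])$ for a suitable $k\in K$. For injectivity, suppose $k_1\exp(X_1)H=k_2\exp(X_2)H$ with $X_i\in V$; writing the relating element of $H$ as $k_H\exp(W)$ with $k_H\in K_H$, $W\in\pf_H$, and combining the $K_H$-invariance of $V$ with the uniqueness of the global Cartan decomposition $G=K\exp(\pf)$, one is reduced to showing $W=0$. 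But the identity then forces $\exp(W)\cdot(\exp(-X_1)\cdot o)=\exp(-X_2')\cdot o$ for some $X_2'\in V$, and applying $\pi$ (using $\pi\equiv o$ on $\exp(V)\cdot o$ together with $H$-equivariance) gives $\exp(W)\cdot o=o$, so $\exp(W)\in K\cap\exp(\pf)=\{e\}$ and $W=0$; feeding this back into the Cartan uniqueness yields $[k_1,X_1]=[k_2,X_2]$. Finally $\Phi^{-1}$ is smooth: the recipe in the surjectivity step expresses it near any coset through a smooth local section of $H\to H/K_H$ applied to $\pi(g^{-1}\cdot o)$, the diffeomorphism $\pf\cong\mathcal{P}$, and a multiplication, all smooth in $g$ and depending only on $gH$. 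Hence $\Phi$ is a smooth bijection with smooth inverse, i.e.\ a diffeomorphism.

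The main obstacle is the bijectivity in the last step: surjectivity and, above all, the vanishing $W=0$ in the injectivity argument rely essentially on the nonpositive curvature of $G/K$, namely the existence and uniqueness of nearest points on the totally geodesic submanifold $N=H\cdot o$ and the precise identification of $\exp(V)\cdot o$ with the normal fibre $\pi^{-1}(o)$. The remaining delicate point, adapting the Cartan involution so that $\theta(H)=H$, is handled in the first step, which I would simply quote from Mostow.
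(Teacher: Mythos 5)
There is a genuine gap: your argument only proves the theorem for \emph{reductive} $H$, whereas the statement (and its use in Section \ref{Sect2}, e.g.\ for $H=N_2$ unipotent) concerns an arbitrary real algebraic subgroup. Your very first step --- realizing $G$ and $H$ as simultaneously self-adjoint subgroups of $\GL(m,\R)$, equivalently finding a Cartan involution $\theta$ with $\theta(H)=H$ --- is impossible unless $H$ is reductive: a $\theta$-stable algebraic subgroup has Lie algebra $\hf=(\hf\cap\kf)\oplus(\hf\cap\pf)$ and is automatically reductive in $G$. (Try it for the upper triangular unipotent subgroup of $\SL(2,\R)$: no conjugate is self-adjoint.) Consequently the whole geometric machinery you build --- the totally geodesic orbit $N=H\cdot o$, which needs $\pf_H=\hf\cap\pf$ to be a Lie triple system, and the nearest-point projection --- is unavailable in the general case. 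Everything after that point is fine as a proof of the reductive case, and is essentially the standard nonpositive-curvature argument; the paper simply quotes this step (from Kobayashi or Heinzner--Schwarz) rather than reproving it, so in that restricted setting your write-up is a reasonable, more self-contained alternative.

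What is missing is the reduction from general algebraic $H$ to the reductive case, which is the actual content of the paper's proof. The paper proceeds in stages: it first treats $G=L\ltimes U$ with $H=L_H\ltimes U_H$ a Levi decomposition, splitting $V=V_1\times V_2$ with $V_2\subset\uf$ a complement to $\uf_H$ produced by Lemma \ref{nil-Lemma} (an induction along the co-abelian filtration $\wf_j=\uf_H+\uf^j$, needed precisely to replace a product of exponentials $\Exp(X)$ by a single $\exp(X)$); it then handles arbitrary algebraic $H$ in reductive $G$ by invoking Borel--Tits to embed $H$ in a parabolic $P=L\ltimes U$ with $H\cap L$ a Levi factor of $H$, and induces up via $G/H\simeq K\times_{K_P}(P/H)$; a final step treats non-reductive $G$. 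None of these reductions appears in your proposal, and they cannot be bypassed by the Hadamard-manifold argument, since for non-reductive $H$ the orbit $H\cdot o$ in $G/K$ need not be totally geodesic or even have the right dimension to carry the normal-bundle picture. To repair the proof you would need to supply this reduction (or restrict the statement to reductive $H$, which would not suffice for the applications in Section \ref{Sect2}).
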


\begin{rmk}  The original statement of Mostow is slightly weaker and deviates from the one above in the following sense:  
Mostow shows that there is a subspace $V\subset \gf$ with $V=V_1 \times \ldots \times V_n$ 
a product of vector spaces 
and for $X=(X_1,\ldots, X_n)\in V$ he uses instead of $\exp(X)$ the expression 
$\Exp(X):=\exp(X_1)\cdot \ldots\cdot \exp(X_n)$. 
\end{rmk}

\begin{proof}  In the first step we assume that both $G$ and $H$ are reductive and let $\Theta$ be a Cartan-involution of $G$
which fixes $H$.  The Cartan involution gives us a maximal compact subgroup $K:=G^\Theta$ which has the property 
that $K\cap H$ is maximal compact in $H$.  Further let $\gf=\kf +\pf$ the decomposition into $\pm 1$ -eigenspaces 
of $\theta:=d\Theta(\1)$.  Then we obtain with $V:=\pf \cap \hf^\perp$ (where $\perp$ is taken w.r.t. to 
a fixed non-degenerate
invariant form on $\gf$)  that   $\Phi$ is a diffeomorphism (see \cite{Kob}, Lemma 2.7, for a short proof or 
\cite{HS}, Th. 9.3,  for a proof using gradient maps).  Note that $V$ is also 
$N_K(H)$-invariant.

\par In the second  step we consider the case where $G=L\ltimes U$ is a semi-direct product of a reductive group 
$L$ and a unipotent group $U$.  Further we assume that $H=L_H \ltimes U_H$ is a Levi-decomposition 
with $U_H<U$ and $L_H <L$.  We find a maximal compact subgroup $K<G$ which is contained in $L$ and such that 
$K\cap H$ is maximal compact subgroup of $H$.  From the first step applied to $L/L_H$ we find a 
$N_K(L_H)$-invariant subspace 
$V_1\subset \lf$ such that $\Phi_1:  K \times_{K\cap H} V_1 \to L/L_H$ is a diffeomorphism.   
By Lemma \ref{nil-Lemma} 
below we find a complementary subspace $V_2\subset \uf$ to $\uf_H$ which is $N_K(U_H)$-invariant 
such that $V_2\to U/U_H, \ X\mapsto \exp(X)U_H$ is a diffeomorphism. 
With $V:=V_1 \times V_2$ 
we finally claim that $\Phi$ is a diffeomorphism. 
To prepare the proof of the claim let us assume first that $\uf$ is abelian.
Then it follows
(cf.~\cite{Rossmann} exercise 7, p.~75, but note the sign error)
for $X_1\in\lf$ and $X_2\in\uf$  that
$$ \exp_G(X_1 +X_2)=(\exp_L(X_1), \exp_U({e^{\ad X_1} - \1\over \ad X_1} X_2))\in L \ltimes U\, .$$
If we assume $\uf_H \triangleleft \uf$ is a coabelian ideal, then 
we can apply the above to the semi-direct product $L\ltimes U/U_H$ and get for 
$X=(X_1, X_2) \in V=V_1 \times V_2$ 
$$ \exp_G(X_1+X_2)U_H =(\exp_L(X_1), \exp_U({e^{\ad X_1} - \1\over \ad X_1} X_2)U_H)\in L \ltimes U/U_H\, .$$  
Now $\ad X_1$ has real spectrum and therefore ${e^{\ad X_1} - \1\over \ad X_1} $ is 
invertible on the abelian Lie algebra $\uf/\uf_H$.  Hence from the fact that $\Phi_1$ is a diffeomorphism 
we obtain that $\Phi$ is a diffeomorphism as well, in case $\uf_H$ is a co-abelian ideal. 
 The general case is then obtained via induction on the filtration (see 
the proof of Lemma \ref{nil-Lemma}.)
Finally $V$ is $N_K(H)$-invariant since $N_K(H)=N_K(L_H)\cap N_K(U_H)$.

\par In the third step we consider the case where $G$ is reductive and $H$ is an arbitrary 
algebraic subgroup of $G$.
A well-known  theorem of Borel and Tits (see \cite{Hu}, Sect. 30.3, Cor. A) implies the existence of a
parabolic subgroup $P=L\ltimes U$ of $G$ with $P\supset H$ and such that $H=L_H \ltimes U_H$ is 
a Levi-decomposition of $H$ with $L_H=H \cap L$ and $U_H=U\cap H$.   By step 2 we obtain
a maximal compact subgroup $K_P\subset L$,
a $N_{K_P}(H)$-invariant vector subspace $V\subset \pf$, and
a diffeomorphism $\Phi_2:  K_P \times_{K_P\cap H}  V \to P/H$ of the desired form, 
where $K_P$ is an appropriate 
maximal compact subgroup of $P$. Let $K$ be a maximal compact subgroup 
of $G$ for which $K_P=K\cap P$.
In view of 
$$ G/H \simeq   G \times_P  (P/H)\simeq  K \times_{K_P} (P/H) 
\simeq K \times_{K_P} (K_P\times_{K_P \cap H} V)\simeq 
K \times_{K\cap H} V $$ 
we deduce the existence of $\Phi$ in this case as well. The Borel-Tits construction of $P\supset H$
(see \cite{Hu}, Sect. 30.3) entails that 
$P$ can be chosen such that it contains $N_G(H)$. It follows that $N_K(H)\subset K_P$,
and hence $V$ is $N_K(H)$-invariant.

\par Finally for most general case where $G=L \ltimes U$ is not necessarily reductive and $H<G$ real algebraic
we first consider the intermediate situation with $\tilde H = H U$ for which we obtain a 
$N_K(\tilde H)$-invariant subspace $\tilde V\subset \lf$ and a
diffeomorphism 
$\tilde \Phi:  K \times_{K_H} \tilde V \to G/\tilde H$ by step 3.   Further,  $\tilde H/H \simeq U/U_H$ for $U_H =U \cap H$. 
As in step 2 we choose a $K_H$-invariant complement $V_U$ to $\uf_H$ in $\uf$ and obtain a Mostow-map 
$\Phi$ with $V=\tilde V \times V_U$. 
\end{proof}

\begin{lemma} \label{nil-Lemma} Let $\uf$ be a nilpotent Lie algebra and $U$ the associated simply connected 
Lie group.  Let $\uf_H<\uf$ be a subalgebra and $U_H:=\exp(\uf_H)$.  Let $n\in\N$ be the nilpotency 
degree of $\uf$, i.e. $\uf^n=\{0\}$ and $n$ is minimal with respect to this property.   For $0\leq j\leq n-1$ we consider the subalgebras 
$\wf_j:=\uf_H + \uf^j$.  Then $\wf_{j+1}\triangleleft\wf_j$ is a co-abelian ideal. Further 
let $V_j\subset \wf_j$ be any complementary subspace to $\wf_{j+1}$ and set 
$V:=V_0 \oplus \ldots \oplus V_{n-1}$.  Then $\uf_H + V =\uf$ is a direct vector sum and the map 
$$ V \times U_H \to U, \ \ (X, h)\mapsto \exp(X)h$$ is 
a diffeomorphism.  If in addition $K_H< \Aut(\uf)\cap \Aut(\uf_H)$ is a reductive group of automorphisms,
then we can request each $V_j$ and hence $V$ to be $K_H$-invariant as well.
\end{lemma}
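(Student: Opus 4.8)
The plan is to set up a descending filtration of $\uf$ adapted simultaneously to the lower central series and to the subalgebra $\uf_H$, and then to run an induction down the filtration, at each stage reducing to the elementary case where the quotient nilpotency step is abelian. First I would verify the algebraic skeleton: for each $0\le j\le n-1$, $\wf_j:=\uf_H+\uf^j$ is a subalgebra (sum of a subalgebra and an ideal of $\uf$), and $\wf_{j+1}\triangleleft \wf_j$ because $\uf^{j+1}\triangleleft\uf$ and $[\uf_H,\uf^j]\subset[\uf,\uf^j]\subset\uf^{j+1}$, whence $[\wf_j,\wf_j]\subset \uf_H$'s bracket together with $\uf^{j+1}$ — more precisely $[\wf_j,\wf_j]\subset [\uf_H,\uf_H]+[\uf_H,\uf^j]+[\uf^j,\uf^j]\subset \uf_H+\uf^{j+1}=\wf_{j+1}$, so $\wf_j/\wf_{j+1}$ is abelian, i.e. $\wf_{j+1}$ is a coabelian ideal of $\wf_j$. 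Since $\uf^0=\uf$ and $\uf^n=0$ we get $\wf_0=\uf$ and $\wf_n=\uf_H$, and choosing any vector-space complements $V_j$ to $\wf_{j+1}$ in $\wf_j$ gives $\uf=\uf_H\oplus V_{n-1}\oplus\cdots\oplus V_0$ as a direct sum of subspaces, by telescoping.

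The analytic heart is the statement that $V\times U_H\to U$, $(X,h)\mapsto\exp(X)h$, is a diffeomorphism, and I would prove it by descending induction on $j$, showing that $V_j\oplus\cdots\oplus V_0$ together with $W_{j+1}:=\exp(\wf_{j+1})$ coordinatizes $W_j:=\exp(\wf_j)$. (Here each $W_j$ is a closed connected subgroup of $U$, being the exponential of a subalgebra in a simply connected nilpotent group, and is itself simply connected nilpotent.) The base case $j=n-1$: $\uf^n=0$ forces $\wf_n=\uf_H$ and the quotient $\wf_{n-1}/\wf_n$ is central in $\wf_{n-1}$, so $W_{n-1}=\exp(V_{n-1})W_n$ with $\exp(V_{n-1})$ an abelian subgroup and the product map a diffeomorphism onto $W_{n-1}$ — this is the elementary semidirect-product computation already used in step 2 of the proof of Theorem \ref{Mostow dec}, with the factor $\tfrac{e^{\ad X_1}-\1}{\ad X_1}$ invertible because $\ad X_1$ is nilpotent on the abelian quotient. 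For the inductive step, from $W_{j+1}\cong V_{j+1}\oplus\cdots\oplus V_0$-coordinates times $U_H$, together with the coabelian-ideal splitting $W_j=\exp(V_j)W_{j+1}$ (same elementary argument applied inside $\wf_j/\wf_{j+1}$, noting $\ad X$ for $X\in V_j$ acts nilpotently), one assembles the global diffeomorphism; the map $\exp(X)h$ with $X=\sum_{j}X_j$ is decoded by peeling off the factors from the top of the filtration downwards, and smoothness of the inverse follows because at each stage the projection $W_j\to W_j/W_{j+1}\cong V_j$ and the resulting element of $W_{j+1}$ depend polynomially (hence smoothly) on the data, $\exp$ being a polynomial diffeomorphism on a nilpotent group.

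For the equivariance addendum: if $K_H<\Aut(\uf)\cap\Aut(\uf_H)$ is reductive, then each $\wf_j=\uf_H+\uf^j$ is $K_H$-stable, since $\uf_H$ is $K_H$-stable by hypothesis and the lower central series terms $\uf^j$ are characteristic, hence $K_H$-stable. A reductive group acting linearly on a finite-dimensional real vector space has completely reducible modules, so the $K_H$-submodule $\wf_{j+1}\subset\wf_j$ admits a $K_H$-invariant complement; choose $V_j$ to be such a complement. Then $V=\bigoplus_j V_j$ is $K_H$-invariant, completing the proof. The main obstacle I anticipate is bookkeeping in the inductive step — verifying that the factor $\tfrac{e^{\ad X}-\1}{\ad X}$ stays invertible and polynomial when one works modulo $\wf_{j+1}$ rather than in an honest abelian quotient of $\uf$ — but this is exactly the situation of \cite{Rossmann}, exercise 7, p.~75, applied to the semidirect product $(\wf_j/\wf_{j+1})\rtimes(\text{action})$, so it reduces to a citation plus a nilpotency remark rather than a genuine difficulty.
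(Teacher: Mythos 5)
Your proposal follows essentially the same route as the paper's proof: the coabelian filtration $\uf=\wf_0\supset\wf_1\supset\cdots\supset\wf_n=\uf_H$, an induction along it to get the diffeomorphism, and complete reducibility of the $K_H$-action for the invariant complements. The one point where the paper is more careful than your ``peeling'' step is that it runs two separate inductions --- first for the product map $(X,h)\mapsto\exp(X_0)\cdots\exp(X_{n-1})h$ and then a replacement of this $\Exp(X)$ by $\exp(X)$ using the observation that $\exp(X_0+X')W_1=\exp(X_0)W_1$ because $\exp$ descends to a group homomorphism $\uf/\wf_1\to U/W_1$ --- and this abelian-quotient homomorphism property, rather than the semidirect-product formula of Rossmann (which does not literally apply, since $V_j$ is not a subalgebra of $\wf_j$ and $\wf_{j+1}$ is only coabelian, not abelian), is the mechanism that makes the decoding at each level of the filtration legitimate.
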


\begin{proof} (cf. Lemma 7.5 in \cite{KSS2}) Since all $\uf_H^{j+1}\triangleleft\uf_H^{j}$ are co-abelian, we obtain by induction on $n$ 
that the assignment 
$$(X,h)\mapsto \underbrace{\exp(X_0) \cdot\ldots \cdot \exp(X_{n-1})}_{:=\Exp(X)} h$$ 
is diffeomorphic. Here we expanded $X\in V=V_0 \oplus \ldots \oplus V_{n-1} $ according to the indicated 
factors. It remains to show that we can replace $\Exp(X)$ by $\exp(X)$.  In case $n=1$ this is clear
as $\exp(X)=\Exp(X)$. Moving by induction we obtain with $V':= V_1 \oplus \ldots \oplus V_{n-1}$ that the map 
$$ V' \times  U_H \to W_1, \ \ (X', h) \mapsto \exp(X')h$$
is diffeomorphic.  The induction step then follows from the observation that $\exp(X_0+ X') W_1=
\exp(X_0) W_1$ as $\exp:  \uf/ \wf_1 \to U/W_1$ is a homomorphism. 
 
\par For the  final assertion about $K_H$-invariance we note that $K_H$ preserves each $\wf_j$
and $\uf_H$. Hence $K_H$ preserves each $\wf_j$ and thus, by complete reducibility, we find 
for each $j$ a $K_H$-invariant complement $V_j$ to $\wf_j$ in $\wf_{j-1}$. 

\end{proof}

\end{document}